\theoremstyle{plain}
\newtheorem{theorem}{Theorem}[section]
\newtheorem{lemma}[theorem]{Lemma}
\newtheorem{corollary}[theorem]{Corollary}
\theoremstyle{definition}
\newtheorem{definition}[theorem]{Definition}
\newtheorem{assumption}[theorem]{Assumption}
\theoremstyle{remark}
\newtheorem{remark}{Remark}
\newcommand{\eps}{\varepsilon}
\newcommand{\ol}{\overline}
\newcommand{\one}{\mathbf{1}}
\newcommand{\circledOne}{\text{\ding{172}}}
\newcommand{\circledTwo}{\text{\ding{173}}}
\newcommand{\circledThree}{\text{\ding{174}}}
\newcommand{\circledFour}{\text{\ding{175}}}
\newcommand{\circledFive}{\text{\ding{176}}}
\renewcommand{\le}{\leqslant}
\renewcommand{\hat}{\widehat}
\newcommand{\numberthis}{\addtocounter{equation}{1}\tag{\theequation}}
\DeclareMathOperator*{\argmin}{arg\,min}
\DeclareMathOperator*{\argmax}{arg\,max}
\DeclareMathOperator*{\Argmin}{Arg\,min}
\DeclareMathOperator*{\Argmax}{Arg\,max}
\DeclareMathOperator{\spn}{span}
\DeclareMathOperator{\kernel}{Ker}
\newcommand{\R}{\mathbb{R}}
\newcommand{\Z}{\mathbb{Z}}
\newcommand{\V}{\mathbb{V}}
\newcommand{\mA}{{\bf A}}
\newcommand{\mW}{{\bf W}}
\newcommand{\sA}{{\mathcal{A}}}
\newcommand{\sP}{{\mathcal{P}}}
\newcommand{\sQ}{{\mathcal{Q}}}
\newcommand{\sS}{{\mathcal{S}}}
\newcommand{\sT}{{\mathcal{T}}}
\newcommand{\sU}{{\mathcal{U}}}
\newcommand{\sV}{{\mathcal{V}}}
\newcommand{\sX}{{\mathcal{X}}}
\newcommand{\sY}{{\mathcal{Y}}}
\newcommand{\sZ}{{\mathcal{Z}}}
\newcommand{\bp}{{\bf p}}
\newcommand{\bq}{{\bf q}}
\newcommand{\bs}{{\bf s}}
\newcommand{\bt}{{\bf t}}
\newcommand{\bu}{{\bf u}}
\newcommand{\bv}{{\bf v}}
\newcommand{\bx}{{\bf x}}
\newcommand{\by}{{\bf y}}
\newcommand{\bz}{{\bf z}}
\newcommand{\mWx}{\mW_{\bx}}
\newcommand{\mWy}{\mW_{\by}}
\newcommand{\ds}{\displaystyle}
\newcommand{\norm}[1]{\left\| #1 \right\|}
\newcommand{\angles}[1]{\left\langle #1 \right\rangle}
\newcommand{\cbraces}[1]{\left( #1 \right)}
\newcommand{\sbraces}[1]{\left[ #1 \right]}
\newcommand{\braces}[1]{\left\{ #1 \right\}}
\def \W{\mathcal W}
\def\X{\mathcal X}
\def\Pp{\mathcal P}
\def\R{\mathbb R}
\def\U{\mathcal{U}}
\def\V{\mathcal{V}}
\def\Z{\mathcal{Z}}
\def\e{\varepsilon}
\def\la{\langle}
\def\ra{\rangle}
\def\x{\mathbf {x}}
\def\z{\mathbf {z}}
\def\W{\mathbf {W}}
\def\u{\mathbf {u}}
\def\b{\mathbf {b}}
\def\v{\mathbf {v}}
\def\A{\boldsymbol{A}}
\def\one{{\mathbf 1}}
\def\p{\mathbf{p}}
\def\q{\mathbf{q}}
\def\lm{\lambda}
\def\<#1,#2>{\langle #1,#2\rangle}
\newcommand{\add}[1]{{\color{black}#1}}
\newcommand{\revoms}[1]{{\color{black}#1}}
\begin{document}


\title{Decentralized Saddle Point Problems via Non-Euclidean Mirror Prox
\footnote{The work of A. Rogozin was supported by a grant for research centers in the field of artificial intelligence, provided by the Analytical Center for the Government of the Russian Federation in accordance with the subsidy agreement [agreement identifier 000000D730321P5Q0002] and the agreement with the Moscow Institute of Physics and Technology dated November 1, 2021 [grant number 70-2021-00138].}
}

\author{
    \name{
        Alexander Rogozin\textsuperscript{a,b}\thanks{CONTACT Alexander Rogozin. Email: aleksandr.rogozin@phystech.edu},
        Aleksandr Beznosikov\textsuperscript{a,b},
        Darina Dvinskikh\textsuperscript{b},
        Dmitry Kovalev\textsuperscript{c},
        Pavel Dvurechensky\textsuperscript{e},
        Alexander Gasnikov\textsuperscript{a,b,d}
    }
    \affil{
        \textsuperscript{a}Moscow Institute of Physics and Technology, Moscow, Russia;\\
        \textsuperscript{b}National Research University Higher School of Economics, Moscow, Russia;\\
        \textsuperscript{c}King Abdullah University of Science and Technology, Thuwal, Saudi Arabia;\\
        \textsuperscript{d}Institute for Information Transmission Problems of RAS, Moscow, Russia;\\
        \textsuperscript{e}Weierstrass Institute for Applied Analysis and Statistics, Berlin, Germany;
    }
}

\maketitle

\begin{abstract}
We consider smooth convex-concave saddle point problems in the decentralized distributed setting, where a finite-sum objective is distributed among the nodes of a computational network. At each node, the local objective depends on the groups of local and global variables. For such problems, we propose a decentralized distributed algorithm with $O(\varepsilon^{-1})$ communication and oracle calls complexities to achieve accuracy $\varepsilon$ in terms of the duality gap and in terms of consensus between nodes. Further, we prove lower bounds for the communication and oracle calls complexities and show that our algorithm matches these bounds, i.e., it is optimal. In contrast to existing decentralized algorithms, our algorithm admits non-euclidean proximal setup, including, e.g., entropic. We illustrate the work of the proposed algorithm on the prominent problem of computing Wasserstein barycenters (WB), where a non-euclidean proximal setup arises naturally in a bilinear saddle point reformulation of the WB problem.
\end{abstract}

\begin{keywords}
Convex optimization, distributed optimization, Wasserstein barycenter, Mirror-Prox
\end{keywords}

\section{Introduction}

In the last few years, we observe an increased interest in the research on algorithms for saddle-point problems (SPP), in particular, motivated by modern applications in GANs training \cite{gidel2018variational}, 
\add{reinforcement learning \cite{Omidshafiei2017:rl, wai2018multi, jin2020efficiently}, optimal transport \cite{jambulapati2019direct} and distributed control \cite{NECOARA2011756}. In addition to the above modern applications, and besides their classical examples in  economics, equilibrium theory, game theory \cite{facchinei2007finite}, saddle-point problems remain popular in supervised learning (with non-separable loss \citep{Thorsten}; with non-separable regularizer~\citep{bach2011optimization}),  unsupervised learning (discriminative clustering \citep{NIPS2004_64036755};  matrix factorization \citep{bach2008convex}), image denoising \citep{esser2010general,chambolle2011first}, robust optimization \citep{BenTal2009:book}, optimization with separable or semi-definite constraints \cite{mateos2015distributed}, and non-smooth optimization via smooth reformulations \citep{nesterov2005smooth,nemirovski2004prox}.}
Decentralized algorithms for SPPs are also an active area of research \cite{liu2019decentralized,mukherjee2020decentralized}.

For convex optimization, the theory of decentralized first-order methods is currently well-developed:
the lower bounds on the number of communication rounds and oracle calls are well-known,
and algorithms  converging according to these lower bounds are developed, see \cite{nedic2009distributed,jakovetic2014fast,arjevani2015communication,lan2017communication,scaman2017optimal,sca2018,uribe2020dual,kovalev2020optimal,gorbunov2020recent,dvinskikh2021decentralized,song2021optimal}.
For convex-concave decentralized SPPs a complete theory exists only in the strongly convex-concave case for unconstrained problems in the euclidean proximal setup \cite{kovalev2022optimal}. Moreover, the lower bounds from \cite{beznosikov2020local,beznosikov2021distributed,kovalev2022optimal} were obtained only for the case when strong convexity and strong concavity parameters are the same. 



For non-strongly convex-concave case, distributed SPP with local and global variables were studied in \cite{mateos2015distributed}, where the authors proposed a subgradient-based algorithm for \add{non-smooth problems} with $O(1/\sqrt N)$ convergence guarantee ($N$ is the number of communication rounds). 
\add{Paper \cite{zhang2021primal} introduced an Extra-gradient algorithm for distributed multi-block SPP with affine constraints. Their method covers the Euclidean case and the algorithm has $O(1/N)$ convergence rate.}
\add{Our paper proposes an algorithm based on adding Lagrangian multipliers to consensus constraints, which is analogical to \cite{zhang2021primal}, but our method works} in a general proximal smooth setup \add{and} achieves $O(1/N)$ convergence rate. \add{Moreover, it has} an enhanced dependence on the condition number of the network.

An optimal method for convex-concave SPPs
is Mirror-prox algorithm \cite{nemirovski2004prox,ouyang2019lower}
with  general proximal setup. 
The Mirror-prox algorithm can be performed in a decentralized manner, however, it is not known whether its \add{optimality is preserved}. 
In this paper, we prove that  Mirror-prox remains optimal  even in a decentralized case  w.r.t. the dependence on the desired accuracy $\varepsilon$ and condition number $\chi$ of communication network if we split communication and oracle complexities by Chebyshev acceleration trick (see, e.g. \cite{li2021accelerated}).


Finally,  we show how the proposed method can be applied  to prominent  problem of computing  Wasserstein barycenters to tackle the problem of instability of regularization-based approaches under a small value of regularizing parameter. The idea is based on the saddle point reformulation of the Wasserstein barycenter problem (see \cite{dvinskikh2020improved}).  Wasserstein barycenters, which define the mean of objects that can be modeled as probability measures on a metric space (images, texts, videos), are used in many fields including Bayesian computations \cite{srivastava2015wasp}, texture mixing \cite{rabin2011wasserstein}, clustering  ($k$-means for probability measures) \cite{del2019robust}, shape interpolation and color transferring \cite{Solomon2015},  statistical estimation of template models \cite{boissard2015distribution} and neuroimaging \cite{gramfort2015fast}.

In the numerical experiments conducted on different network architectures, we demonstrate a better approximation of the true barycenter by the proposed distributed Mirror-prox algorithm  in comparison with \add{regularization} based approaches. 
      
\noindent\textbf{Contribution.} Our contribution can be summarized as follows.
\begin{itemize}[leftmargin=*]
    \item We provide a decentralized Mirror-prox based algorithm for  convex-concave SPPs and prove its optimality in the euclidean proximal setup;
    \item We provide the lower bounds on the number of communication steps and oracle calls per node for convex-concave SPPs 
    in the euclidean proximal setup. 
\end{itemize}

\noindent\textbf{Paper organization.} This paper is organized as follows. Section \ref{sec:problem_statement} presents a  saddle point problem of interest along with its decentralized reformulation. In Section \ref{sec:distributed_mirror_prox}, we provide the main algorithm of the paper to solve such kind of problems. In Section \ref{sec:lower_bounds}, we present the lower complexity bounds for saddle point problems without individual variables. Finally in Section \ref{sec:wb}, we show how the proposed algorithm can be applied to the  problem computing Wasserstein barycenters .

\noindent\textbf{Notation.}
 For a prox-function $d(x)$, we define the corresponding  Bregman divergence: $B(x, y) = d(x) -d(y) - \la  \nabla d(y), x - y \ra.$ 
 For some norm $\|\cdot\|$, we define its dual norm $\|\cdot\|_*$ in a usual way: $\|s\|_* = \max_{x\in \X} \{ \la x,s \ra : \|x\|\leq 1 \}.$
For two vectors $x,y$ of the same size, denotations $x/y$ and $x \odot y$  stand for the element-wise product and  element-wise division respectively. When functions, such as $log$ or $exp$, are used on vectors, they are always applied element-wise.
We use bold symbol for  column vector $\x = [x_1^\top,\cdots,x_m^\top]^\top \in \mathbb{R}^{mn}$, where $x_1,...,x_m\in \R^n$.
Then we refer to the $i$-th component of  vector $\x$ as $ x_i\in \R^n$ and to the $j$-th component of  vector $x_i$ as $[x_i]_j$.  We denote by 
$\boldsymbol 1$ the vector of ones.
For matrices $A$ and $B$ we denote their Kronecker product as $A\otimes B$.

\section{Problem statement}\label{sec:problem_statement}

In this paper, we consider a saddle point problem of the sum-type with global and local groups of variables 
\begin{align}\label{eq:problem_initial}
    \min_{\substack{x\in\bar\sX\\\bp\in\sP}} \max_{\substack{y\in\bar\sY\\\bq\in\sQ}}~ {f(x, \bp, y, \bq) \triangleq \frac{1}{m}}\sum_{i=1}^m f_i(x, p_i, y, q_i),
\end{align}
where $\bp \triangleq (p_1^\top \in \sP_1 \ldots p_m^\top\in \sP_m)^\top \in \sP \triangleq \sP_1\times\ldots\times\sP_{m}$ and $\bq\triangleq (q_1^\top \in \sQ_1 \ldots q_m^\top \in \sQ_m)^\top \in \sQ \triangleq \sQ_1\times\ldots\times\sQ_{m}$. Variables $x, p_i, y, q_i$ have dimensions $d_x, d_p, d_y, d_q$ respectively for all $i=1,\dots,m$. For problem \eqref{eq:problem_initial}, we make the following assumption.

\begin{assumption}\label{assum:convex_compact}
\item
    \begin{enumerate}[leftmargin=*]
    \setlength\itemsep{-0.3em}
    	\item Sets $\sP_i$, $\sQ_i$ ($i = 1, \ldots, m$) and $ \bar\sX$, $\bar\sY$ are \add{convex and compact}.
    	\item Function $f_i(\cdot, \cdot, y, q_i)$ is convex on $\bar\sX\times\sP_i$ for every fixed $y\in\bar\sY,~ q_i\in\sQ_i$ for all $i=1,\dots,m$.
    	\item Function $f_i(x, p_i, \cdot, \cdot)$ is concave on $\bar\sY\times\sQ_i$ for every fixed $x\in\bar\sX,~p_i\in\sP_i$ for all $i=1,\dots,m$.
    \end{enumerate}
\end{assumption}

\subsection{Decentralized setup and communication matrix}
We suppose that there is a communication network of agents (machines/computing nodes) which we represent by  a connected undirected graph $\mathcal{G} = (V, E)$.  Each agent $i$ privately 
holds  its function $f_i$   ($i=1,\dots,m$). Every pair of agents $(i,j)$ can communicate iff $(i,j)\in E$. We can represent these communication constraints imposed
by the network through a particular matrix ${ W}$ satisfying the following assumption
\begin{assumption}\label{assum:mixing_matrix}
\item
\begin{enumerate}[leftmargin=*]
\setlength\itemsep{-0.3em}
    \item ${ W}$ is symmetric positive semi-definite matrix
	\item (Network compatibility) For all $i,j = 1,\dots,m$ the entry of $ W$: $[{ W}]_{ij} = 0$ if $(i, j)\notin E$ and $i\ne j$.
	\item (Kernel property) For any $v = [v_1,\ldots,v_m]^\top\in\R^m$, ${ W} v = 0$ if and only if $v_1 = \ldots = v_m$, i.e.  $\kernel { W} = \spn\braces{\one}$.
\end{enumerate}    
\end{assumption}
An example of matrix satisfying this assumption is the graph Laplacian ${ W}{\in \mathbb{R}^{m\times m}}$:
{\begin{align}\label{eq:matrx_def}
[W]_{ij} \triangleq \begin{cases}
-1,  & \text{if } (i,j) \in E,\\
\text{deg}(i), &\text{if } i= j, \\
0,  & \text{otherwise,}
\end{cases}
\end{align}}
where $\text{deg}(i)$ is the degree of the node $i$, i.e., the number of neighbors of the node.
The main characteristic of the network is the graph condition number \[\chi \triangleq \chi( W) \triangleq {\lambda_{\max}( W)}/{\lambda_{\min}^+( W)},\] where $\lambda_{\max}( W)$ and $\lambda_{\min}^+( W)$ denote the maximal and the minimal positive eigenvalues of $W$ respectively.

\subsection{{Distributed problem reformulation}}
Let matrices $W_x\in\R^{m\times m}$ and $W_y\in\R^{m\times m}$ be associated with variables $x$ and $y$, respectively. Introduce communication matrices $\mWx = W_x\otimes I_{d_x},~ \mWy = W_y\otimes I_{d_y}$. We can equivalently rewrite the problem \eqref{eq:problem_initial} in a distributed setup  replacing the constraint $x_1=\dots=x_m$ by $\mWx\bx = 0$ and $y_1=\dots=y_m$ by $\mWy\by = 0$, respectively. Let us introduce $F(\bx, \bp, \by, \bq) \triangleq \sum_{i=1}^m f_i(x_i, p_i, y_i, q_i)$ and $\sX = \bar\sX\times\ldots\times\bar\sX$ and $\sY = \bar\sY\times\ldots\times\bar\sY$.
\begin{assumption}\label{assum:bounded_gradients}
    There exist positive scalars $M_x, M_y$ such that {for all $i = 1,\ldots,m$ and} for any $x_i\in\bar\sX$, $y_i\in\bar\sY$, $p_i\in\sP_i$, $q_i\in\sQ_i$ it holds $\norm{\nabla_x f_i(x_i, p_i, y_i, q_i)}_2\leq M_x$, $\norm{\nabla_y f_i(x_i, p_i, y_i, q_i)}_2\leq M_y$.
\end{assumption}

Our method requires rewriting problem \eqref{eq:problem_initial} by adding Lagrangian multipliers $\bs, \bz$. Moreover, we bound the norms of the dual variables at the solution.


\begin{theorem}\label{th:constrained_saddle}
    Introduce
    \begin{align*}
        \bar R_\sZ^2 = \frac{\bar\alpha {m} M_x^2}{( \lambda_{\min}^+(\mWx))^2},~ \bar R_\sS^2 = \frac{\bar\beta {m} M_y^2}{( \lambda_{\min}^+(\mWy))^2},
    \end{align*}
    where $\bar\alpha, \bar\beta\in (1, +\infty) \cup \braces{+\infty}$.
    Problem \eqref{eq:problem_initial} is equivalent to
    \begin{align}\label{eq:problem_reform_constrained}
            \min_{\substack{\bp\in\sP, \bx\in\sX \\ \norm{\bs}_2\le \bar R_\sS}} \max_{\substack{\by\in\sY, \bq\in\sQ \\ \norm{\bz}\le \bar R_\sZ}} \sbraces{F(\bx, \bp, \by, \bq) +  \angles{\bz, \mWx\bx} +  \angles{\bs, \mWy\by}}.
    \end{align}
\end{theorem}
Bounding the norms of dual variables is a known result in minimization \cite{lan2020communication,gorbunov2019optimal}. We prove Theorem \ref{th:constrained_saddle} \revoms{by considering minimization over $(\bx, \bp, \bs)$ and maximization over $(\by, \bq, \bz)$ separately. We decompose a saddle-point problem into two optimization problems and apply to each of them the following result.}


\begin{lemma}\label{lemma:constrained_dual}
	Let $\Theta\subseteq\R^d$ be a closed convex set and $h(\theta): \Theta\to\R$ be a convex differentiable function. Consider a problem with affine constraints
	\begin{align}\label{eq:lemma_affine_constraints}
		\min_{\theta\in\Theta} h(\theta) \quad\text{s.t. } \mA\theta = b 
	\end{align}
	
	\begin{enumerate}[leftmargin=*]
	    \item Dual problem has a solution $\nu^*$ such that $\norm{\nu^*}_2^2\leq \frac{\norm{\nabla h(\theta^*)}_2^2}{(\sigma_{\min}^+(\mA))^2} =: R_\nu^2$, where $\sigma_{\min}^+(\mA)$ denotes the minimal non-zero singular value of $\mA$ {(initially presented in \cite{lan2020communication})}.
	    \item Let $R > R_{{\nu}}$ and consider a constrained dual problem
    	\begin{align}
    		\max_{\norm{\nu}_2\leq R} &\min_{\theta\in\Theta} \sbraces{h(\theta) + \angles{\nu, \mA\theta - b}} = \max_{\norm{\nu}_2\leq R} \varphi(\nu) \label{eq:lemma_constrained_saddle}
    	\end{align}
    	If $(\tilde\theta, \tilde\nu)$ is a saddle point of \eqref{eq:lemma_constrained_saddle}, then $\tilde\theta$ is a solution of \eqref{eq:lemma_affine_constraints}.
	\end{enumerate}
\end{lemma}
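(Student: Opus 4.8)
The plan is to handle the two parts separately, building the second on the first. For part~1 I would proceed through standard Lagrangian duality: form the Lagrangian $L(\theta,\nu) = h(\theta) + \angles{\nu, \mA\theta - b}$ and the dual function $\varphi(\nu) = \min_{\theta\in\Theta} L(\theta,\nu)$. Since the constraints are affine and \eqref{eq:lemma_affine_constraints} is feasible, the affine constraint qualification gives strong duality, so a dual optimum $\nu^*$ exists with $\varphi(\nu^*)$ equal to the primal optimal value $p^*$. The real content is to select a dual optimum of controlled norm and to tie its norm to $\|\nabla h(\theta^*)\|_2$.

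First I would show that the set of dual optima is invariant under translations by $\kernel\mA^\top$. Feasibility forces $b\in\image\mA = (\kernel\mA^\top)^\perp$, so for any $w\in\kernel\mA^\top$ one has $\angles{\nu + w, \mA\theta - b} = \angles{\nu,\mA\theta - b} + \angles{\mA^\top w,\theta} - \angles{w,b} = \angles{\nu,\mA\theta-b}$ because $\mA^\top w = 0$ and $\angles{w,b}=0$; hence $\varphi(\nu + w) = \varphi(\nu)$, and I may choose $\nu^*\in\image\mA$. The stationarity condition for the inner minimization reads $\angles{\nabla h(\theta^*) + \mA^\top\nu^*, \theta - \theta^*}\ge 0$ for all $\theta\in\Theta$; when $\theta^*$ lies in the relative interior of $\Theta$ this yields the clean equality $\mA^\top\nu^* = -\nabla h(\theta^*)$, and since $\nu^*\in\image\mA$ we have $\|\mA^\top\nu^*\|_2\ge\sigma_{\min}^+(\mA)\,\|\nu^*\|_2$, giving $\|\nu^*\|_2\le\|\nabla h(\theta^*)\|_2/\sigma_{\min}^+(\mA) = R_\nu$. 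This is precisely the multiplier bound imported from \cite{lan2020communication}.

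For part~2 the key observation is that \eqref{eq:lemma_constrained_saddle} is an exact $\ell_2$-penalty reformulation of \eqref{eq:lemma_affine_constraints}. By the saddle-point property of $(\tilde\theta,\tilde\nu)$ and compactness of $\braces{\|\nu\|_2\le R}$, $\tilde\theta$ minimizes $\theta\mapsto\max_{\|\nu\|_2\le R} L(\theta,\nu) = h(\theta) + R\,\|\mA\theta - b\|_2$ over $\Theta$. Using $R\ge\|\nu^*\|_2$ and Cauchy--Schwarz, for every $\theta\in\Theta$ one has $h(\theta)+R\|\mA\theta - b\|_2\ge h(\theta)+\angles{\nu^*,\mA\theta - b}\ge\varphi(\nu^*)=p^*$, with equality at $\theta^*$, so the minimal penalized value is $p^*$. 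Inserting the minimizer $\tilde\theta$ into the same chain gives $p^* = h(\tilde\theta)+R\|\mA\tilde\theta - b\|_2\ge p^* + (R-\|\nu^*\|_2)\|\mA\tilde\theta - b\|_2$, whence $(R-\|\nu^*\|_2)\|\mA\tilde\theta-b\|_2\le 0$. Because $R>R_\nu\ge\|\nu^*\|_2$ strictly, this forces $\mA\tilde\theta = b$, i.e. $\tilde\theta$ is feasible, and then $h(\tilde\theta)=p^*$ makes it a solution of \eqref{eq:lemma_affine_constraints}.

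I expect the main obstacle to be in part~1: controlling the multiplier norm when the primal optimum $\theta^*$ sits on the boundary of $\Theta$, where stationarity only gives $-(\nabla h(\theta^*)+\mA^\top\nu^*)\in N_\Theta(\theta^*)$ rather than the equality $\mA^\top\nu^* = -\nabla h(\theta^*)$. Resolving this cleanly --- via a relative-interior argument, via the sensitivity characterization $-\nu^*\in\partial v(0)$ of the optimal multiplier, or simply by invoking \cite{lan2020communication} --- is the delicate point; once part~1 is secured, part~2 is an essentially routine exact-penalty computation.
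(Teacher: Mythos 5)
Your proposal is correct, and in part 2 it takes a genuinely different route from the paper's. The paper invokes Sion--Kakutani to equate $\max_{\norm{\nu}_2\leq R}\varphi(\nu)$ with $\min_{\theta\in\Theta}\psi(\theta)$ where $\psi(\theta)=\max_{\norm{\nu}_2\leq R}\sbraces{h(\theta)+\angles{\nu,\mA\theta-b}}$, assembles the pair $(\hat\theta,\nu^*)$ into a saddle point of \eqref{eq:lemma_constrained_saddle}, and extracts feasibility from the saddle inequality $\angles{\nu^*-\nu,\mA\hat\theta-b}\geq 0$ for all $\nu\in B_R(0)$ combined with the interiority $\norm{\nu^*}_2\leq R_\nu<R$. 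You instead evaluate the inner maximum in closed form, $\psi(\theta)=h(\theta)+R\norm{\mA\theta-b}_2$, and run an exact-penalty gap argument: the chain $p^*=h(\tilde\theta)+R\norm{\mA\tilde\theta-b}_2\geq p^*+(R-\norm{\nu^*}_2)\norm{\mA\tilde\theta-b}_2$ forces $\mA\tilde\theta=b$ because $R>R_\nu\geq\norm{\nu^*}_2$ strictly, after which $h(\tilde\theta)=p^*$ gives optimality. Both arguments pivot on exactly the same strict inequality $R>\norm{\nu^*}_2$, and both rely on strong duality $\varphi(\nu^*)=p^*$, which you, like the paper, import from \cite{lan2020communication} through part 1. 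What your route buys: it is more elementary (nothing beyond Cauchy--Schwarz and weak duality once part 1 is in hand, no minimax theorem), and it treats an arbitrary given saddle point $(\tilde\theta,\tilde\nu)$ directly --- the paper's proof, as written, argues about the specific pair $(\hat\theta,\nu^*)$ it constructs and leaves implicit the interchangeability of saddle-point components needed to cover a general $(\tilde\theta,\tilde\nu)$. What the paper's route buys: its Sion--Kakutani bookkeeping additionally \emph{establishes existence} of a saddle point of \eqref{eq:lemma_constrained_saddle}, which the lemma statement merely presupposes, so the paper's version is slightly more informative. Finally, your part 1 sketch (translating the dual optimum by $\kernel\mA^\top$ to select $\nu^*\in\image\mA$, then bounding $\norm{\nu^*}_2$ via $\sigma_{\min}^+(\mA)$) is the standard argument behind the cited bound, and you correctly flag the boundary case where stationarity only yields membership of $-(\nabla h(\theta^*)+\mA^\top\nu^*)$ in the normal cone to $\Theta$ at $\theta^*$ rather than the clean equality; since the paper handles part 1 purely by citing \cite{lan2020communication}, deferring that delicate case to the citation puts you on the same footing as the paper.
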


\begin{proof}
	For part 1, see \cite{lan2020communication}.
	
	\noindent Introduce function
	\begin{align*}
		\psi(\theta) &= \max_{\norm{\nu}_2\leq R} \sbraces{h(\theta) + \angles{\nu, \mA\theta - b}}.
	\end{align*}
	Since $\revoms{\Theta}$ is a compact set, it holds 
	\begin{align*}
		\max_{\norm{\nu}_2\leq R} \varphi(\nu) = \min_{\theta\in\Theta} \psi(\theta)
	\end{align*}
	by Sion-Kakutani theorem (see i.e. Theorem D.4.2 in \cite{ben2011lectures}). Moreover, since 
	\begin{align*}
		\nu^*\in\Argmax_{\nu\in\R^m}\varphi(\nu) \text{ and } \nu^*\in B_R(0),
	\end{align*}
	we have 
	\begin{align*}
		\nu^*\in\Argmax_{\norm{\nu}_2\leq R}\varphi(\nu) \text{ and } \max_{\nu\in\R^m} \varphi(\nu) = \max_{\norm{\nu}_2\leq R} \varphi(\nu).
	\end{align*}
	Also note that $\ds h(\revoms{\tilde\theta}) = \min_{\theta\in\Theta} \psi(\theta)$ by Theorem D.4.1 in \cite{ben2011lectures}. Combining the three facts
	\begin{align*}
		\revoms{\tilde\theta}\in\Argmin_{\theta\in\Theta} \psi(\theta),~ \nu^*\in\Argmax_{\norm{\nu}_2\leq R} \varphi(\nu),~ \min_{\theta\in\Theta} \psi(\theta) = \max_{\norm{\nu}_2\leq R} \varphi(\nu)
	\end{align*}
	we obtain that $(\revoms{\tilde\theta}, \nu^*)$ is a saddle point of \eqref{eq:lemma_constrained_saddle} by Theorem D.4.1 of \cite{ben2011lectures}. \revoms{Note that $(\tilde\theta, \tilde\nu)$ is a saddle point of~\eqref{eq:lemma_constrained_saddle}, as well, but we do not need $\tilde\nu$ in the analysis}. Therefore for $\nu\in B_R(0)$ we have
	\begin{align*}
		h(\revoms{\tilde\theta}) + \angles{\nu^*, \mA\revoms{\tilde\theta} - b} &\geq h(\revoms{\tilde\theta}) + \angles{\revoms{\nu^*}, \mA\revoms{\tilde\theta} - b} \\
		\angles{\nu^* - \nu, \mA\revoms{\tilde\theta} - b} &\geq 0.
	\end{align*}
	Taking into account that $\norm{\nu^*}_2\leq R_\nu < R$, we imply $\nu^*\in\text{int}B_R(0)$ and therefore $\mA\revoms{\tilde\theta} - b  = 0$. \revoms{Therefore, we have} 
	\begin{align*}
		h(\hat\theta) &\leq h(\theta)~ \forall \theta\in\Theta: \mA \revoms{\theta} = b
	\end{align*}
	which concludes the proof.
\end{proof}

\begin{proof}[Proof of Theorem~\ref{th:constrained_saddle}]
We are free to use Sion--Kakutani theorem since the sets $\sX, \sP, \sY, \sQ$ are compact.
\begin{align}\label{eq:saddle_reformulation_1}
    \min_{\substack{\mWx\bx = 0 \\ \bp\in\sP, \bx\in\sX}} \max_{\substack{\mWy\by = 0 \\ \by\in\sY, \bq\in\sQ}} F(\bx, \bp, \by, \bq)
    &= \max_{\substack{\mWy\by = 0 \\ \by\in\sY, \bq\in\sQ}} \min_{\substack{\mWx\bx = 0 \\ \bp\in\sP, \bx\in\sX}} F(\bx, \bp, \by, \bq) \notag \\
	&= \max_{\substack{\mWy\by = 0 \\ \by\in\sY, \bq\in\sQ}} \min_{\bp\in\sP} \sbraces{\min_{\substack{\mWx\bx = 0 \\ \bx\in\sX}} F(\bx, \bp, \by, \bq)}
\end{align}
For any fixed pair $(\bx, \by, \bq)\in \sX\times\sZ\times\sQ$ function $F(\bx, \bp, \by, \bq)$ is convex in $\bp$. Consider problem
\begin{align}\label{eq:min_on_p}
	\min_{\bx\in\sX} F(\bx, \bp, \by, \bq) \text{ s.t. } \mWx\bx = 0
\end{align}
and denote 
\begin{align*}
	\bx^*(\bp, \by, \bq) = \argmin_{\substack{\mWx\bx = 0 \\ \bx\in\sX}}\sbraces{F(\bx, \bp, \by, \bq)}.
\end{align*}
By Lemma \ref{lemma:constrained_dual}, part 1, there exists a solution of dual to \eqref{eq:min_on_p} with norm bounded by
\begin{align}\label{eq:Rdef}
	\frac{\norm{\nabla_\bx F(\bx^*(\bp, \by, \bq), \bp, \by, \bq)}_2^2}{(\lambda_{\min}^+(\mWx))^2} \leq \bar R_\sZ.
\end{align}
By Lemma \ref{lemma:constrained_dual} (part 2), problem \eqref{eq:min_on_p} is equivalent to
\begin{align}\label{eq:min_on_p_saddle}
	\max_{\norm{\bz}_2\leq \bar R_\sZ} \min_{\bx\in\sX} \sbraces{F(\bx, \bp, \by, \bq) + \angles{\bz, \mWx\bx}}
\end{align}
in the sense that for any saddle point $(\hat\bx(\bp, \by, \bq), \hat\bz(\bx, \by, \bq))$ of \eqref{eq:min_on_p_saddle} we have that $\hat\bx(\bp, \by, \bq)$ is a solution of \eqref{eq:min_on_p}.

Returning to \eqref{eq:saddle_reformulation_1}, we get
\begin{align*}
	\max_{\substack{\mWy\by = 0 \\ \by\in\sY, \bq\in\sQ}} &\min_{\bp\in\sP} \sbraces{\min_{\substack{\mWx\bx = 0 \\ \bx\in\sX}} F(\bx, \bp, \by, \bq)} \\
	&= \max_{\substack{\mWy\by = 0 \\ \by\in\sY, \bq\in\sQ}} \min_{\bp\in\sP} \sbraces{\max_{\norm{\bz}_2\leq \bar R_\sZ} \min_{\bx\in\sX} \cbraces{F(\bx, \bp, \by, \bq) + \angles{\bz, \mWx\bx}}} \\
	& = \min_{\bp\in\sP, \bx\in\sX} \max_{\substack{\by\in\sY, \bq\in\sQ \\ \mWy\by = 0, \norm{\bz}_2\leq \bar R_\sZ}} \sbraces{F(\bx, \bp, \by, \bq) + \angles{\bz, \mWx\bx}} \\
	& = \min_{\bp\in\sP, \bx\in\sX} \max_{\substack{\bq\in\sQ \\ \norm{\bz}_2\leq \bar R_\sZ}} \sbraces{\max_{\substack{\by\in\sY \\ \mWy\by = 0}} \sbraces{F(\bx, \bp, \by, \bq) + \angles{\bz, \mWx\bx}}}. \numberthis\label{eq:saddle_reformulation_2}
\end{align*}
Now we introduce Lagrange multipliers for the constraints $\mWy\by = 0$, as well. Analogously to the case with $\mWx\bx = 0$ constraints, consider a problem
\begin{align}\label{eq:max_on_r}
    \max_{\by\in\sY} \sbraces{F(\bx, \bp, \by, \bq) + \angles{\bz, \mWx\bx}}~ \text{s.t.}~ \mWy\by = 0
\end{align}
and introduce its solution 
\begin{align*}
    \by^*(\bx, \bp, \bq, \bz) = \argmax_{\substack{\mWy\by = 0 \\ \by\in\sY}} \sbraces{F(\bx, \bp, \by, \bq) + \angles{\bz, \mWx\bx}}.
\end{align*}
Analogously, the dual solution norm to problem \eqref{eq:max_on_r} can be bounded as
\begin{align}
\label{eq:R_u_def}
    \frac{\norm{\nabla_\by F(\bx, \bp, \by^*(\bx, \bp, \bq, \bz), \bq)}_2^2}{(\lambda_{\min}^+ (\mWy))^2} \leq \bar R_\sS^2,~ \beta \in (1, +\infty].
\end{align}
And we get a saddle-point reformulation of \eqref{eq:max_on_r}:
\begin{align*}
    \min_{\norm{\bs}_2\leq \bar R_\sS} \max_{\bq\in\sQ} \sbraces{F(\bx, \bp, \by, \bq) + \angles{\bz, \mWx\bx} + \angles{\bs, \mWy\by}}.
\end{align*}
Substituting this reformulation into \eqref{eq:saddle_reformulation_2}, we obtain
\begin{align}
    &\min_{\bp\in\sP, \bx\in\sX} \max_{\substack{\bq\in\sQ \\ \norm{\bz}_2\leq \bar R_\sZ}} \sbraces{\max_{\substack{\by\in\sY \\ \mWy\by = 0}} \sbraces{F(\bx, \bp, \by, \bq) + \angles{\bz, \mWx\bx}}} \notag \\
    &\quad = \min_{\bp\in\sP, \bx\in\sX} \max_{\substack{\bq\in\sQ \\ \norm{\bz}_2\leq \bar R_\sZ}} \sbraces{\min_{\norm{\bs}_2\leq \bar R_\sS} \max_{\by\in\sY} \sbraces{F(\bx, \bp, \by, \bq) + \angles{\bz, \mWx\bx} + \angles{\bs, \mWy\by}}} \notag  \\
    &\quad = \min_{\bp\in\sP, \bx\in\sX} \max_{\substack{\by\in\sY, \bq\in\sQ \\ \norm{\bz}_2\leq \bar R_\sZ}} \min_{\norm{\bs}_2\leq \bar R_\sS} \sbraces{F(\bx, \bp, \by, \bq) + \angles{\bz, \mWx\bx} + \angles{\bs, \mWy\by}} \notag \\
    &\quad = \max_{\substack{\by\in\sY, \bq\in\sQ \\ \norm{\bz}_2\leq \bar R_\sZ}} \min_{\substack{\bp\in\sP, \bx\in\sX \\ \norm{\bs}_2\leq \bar R_\sS}} \sbraces{F(\bx, \bp, \by, \bq) + \angles{\bz, \mWx\bx} + \angles{\bs, \mWy\by}}, \label{eq:saddle_reformulation_3}
\end{align}
which is an equivalent reformulation of \eqref{eq:problem_initial}. Note that cases $\bar R_\sS = +\infty, \bar R_\sZ = +\infty$ are also supported in this proof. A $\min$-$\max$ reformulation is obtained analogously by adding Lagrange multipliers in different order.
\end{proof}

From Theorem~\ref{th:constrained_saddle} we immediately obtain the following corollary by setting $\bar\alpha = \bar\beta = +\infty$.
\begin{corollary}\label{cor:saddle_point_reformulation}
    Problem \eqref{eq:problem_initial} can be equivalently rewritten as follows
    \begin{align}\label{eq:problem_reform}
    	\min_{\substack{\bx\in\sX, ~\bp\in\sP \\ \bs\in\R^{m d_y}}} ~\max_{\substack{\by\in\sY, ~\bq\in\sQ \\ \bz\in\R^{m d_x}}} ~\sbraces{F(\bx, \bp, \by, \bq) + \angles{\bz, \mWx\bx} + \angles{\bs, \mWy\by}},
    \end{align}
    in the sense that for any saddle point $(\bx^*, \bp^*, \by^*, \bq^*, \bs^*, \bz^*)$ of \eqref{eq:problem_reform} we have $x_1^* = \ldots = x_m^* = x^*$, $y_1^* = \ldots = y_m^* = y^*$, and the point $(x^*, \bp^*, y^*, \bq^*)$ is a saddle point of \eqref{eq:problem_initial}.
\end{corollary}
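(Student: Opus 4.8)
The plan is to specialize Theorem~\ref{th:constrained_saddle} to $\bar\alpha = \bar\beta = +\infty$, so that $\bar R_\sZ = \bar R_\sS = +\infty$ and the ball constraints $\norm{\bz}_2 \le \bar R_\sZ$ and $\norm{\bs}_2 \le \bar R_\sS$ in \eqref{eq:problem_reform_constrained} become vacuous; the proof of the theorem explicitly admits this limiting case. This already yields that \eqref{eq:problem_reform} and \eqref{eq:problem_initial} share the same min-max value, so the remaining work is purely structural: to show that every saddle point of the unconstrained reformulation encodes consensus and descends to a saddle point of the original problem.

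First I would take an arbitrary saddle point $(\bx^*, \bp^*, \by^*, \bq^*, \bs^*, \bz^*)$ of \eqref{eq:problem_reform} and exploit that the multiplier $\bz$ now ranges over all of $\R^{m d_x}$ and $\bs$ over all of $\R^{m d_y}$. The objective depends on $\bz$ only through the linear term $\angles{\bz, \mWx\bx^*}$, which is unbounded above over $\R^{m d_x}$ unless $\mWx\bx^* = 0$; since $\bz^*$ is required to be a maximizer, we must have $\mWx\bx^* = 0$. Symmetrically, $\bs^*$ minimizes the linear term $\angles{\bs, \mWy\by^*}$ over $\R^{m d_y}$, forcing $\mWy\by^* = 0$. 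Using $\mWx = W_x \otimes I_{d_x}$ and the Kernel property of Assumption~\ref{assum:mixing_matrix}, I would identify $\kernel(\mWx) = \spn\braces{\one}\otimes\R^{d_x}$, so $\mWx\bx^* = 0$ is equivalent to $x_1^* = \ldots = x_m^* =: x^*$, and likewise $\mWy\by^* = 0$ gives $y_1^* = \ldots = y_m^* =: y^*$.

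With consensus established, both coupling terms vanish at the saddle point, and the objective collapses to $F(\bx^*, \bp^*, \by^*, \bq^*) = \sum_{i=1}^m f_i(x^*, p_i^*, y^*, q_i^*) = m\, f(x^*, \bp^*, y^*, \bq^*)$. To finish I would transfer the saddle-point inequalities of \eqref{eq:problem_reform} to \eqref{eq:problem_initial}: fixing the duals at $(\bz^*, \bs^*)$ and restricting the competing primal points to the consensus subspace, say $\bx = \one\otimes x$ and $\by = \one\otimes y$, annihilates all coupling terms, so the two inequalities reduce to $f(x^*, \bp^*, y, \bq) \le f(x^*, \bp^*, y^*, \bq^*) \le f(x, \bp, y^*, \bq^*)$ for all feasible $x, \bp, y, \bq$ (the constant factor $m$ being immaterial). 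These are precisely the defining inequalities of a saddle point of \eqref{eq:problem_initial}.

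I expect the main obstacle to be exactly this last transfer: the inequalities of the reformulation a priori allow $\bx, \by$ to leave the consensus subspace, and one must verify that they still yield valid information once the variables are genuinely shared. The clean resolution is the observation used above, namely that any consensus-feasible competitor annihilates the coupling terms, so the reformulation's saddle-point inequalities, when specialized to such competitors, become verbatim those of the original saddle problem.
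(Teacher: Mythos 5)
Your proposal is correct and follows the paper's own route: the paper proves this corollary in a single line by setting $\bar\alpha = \bar\beta = +\infty$ in Theorem~\ref{th:constrained_saddle}, whose proof explicitly notes that the infinite-radius cases $\bar R_\sZ = \bar R_\sS = +\infty$ are supported. The additional detail you supply --- that unboundedness of the linear terms $\angles{\bz, \mWx\bx^*}$ and $\angles{\bs, \mWy\by^*}$ over the unconstrained duals forces $\mWx\bx^* = 0$ and $\mWy\by^* = 0$, hence consensus via the kernel property of Assumption~\ref{assum:mixing_matrix}, and that consensus-feasible competitors annihilate the coupling terms so the saddle-point inequalities of \eqref{eq:problem_reform} descend verbatim (up to the immaterial factor $m$) to those of \eqref{eq:problem_initial} --- is a correct and explicit rendering of what the paper compresses into the word ``immediately.''
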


\section{Distributed algorithm for saddle-point problems}\label{sec:distributed_mirror_prox}

Now we provide a decentralized distributed algorithm to solve saddle-point problem \eqref{eq:problem_initial}. The pseudo-code is listed in Algorithm \ref{alg:mirror_prox_distr} and the main result on convergence rate is given in Theorem \ref{Th:distr_MP}.
The main idea is to use reformulation \eqref{eq:problem_reform} and apply mirror prox algorithm \cite{nemirovski2004prox} for its solution. This requires careful analysis in two aspects. First, the Lagrange multipliers $\bz,\bs$ are not constrained, while the convergence rate result for the classical Mirror-Prox algorithm \cite{nemirovski2004prox} is proved for problems on compact sets. Second, we need to show that the updates can be organized via only local communications between the nodes in the network. 

\subsection{Algorithm}
For each variable $t_i\in\{x_i, p_i, y_i, q_i, s_i, z_i\}$ and corresponding set $\sT_i\in\braces{\bar\sX, \sP_i, \bar\sY, \sQ_i, \R^{d_y}, \R^{d_x}}$, we assume there is a norm $\norm{t_i}_{\sT_i}$. Our analysis \revoms{supports} arbitrary norms for $x_i, p_i, y_i, q_i$, but we use the Euclidean norm for dual variables $s_i, z_i$, i.e. $\norm{s_i}_{\sS_i} = \norm{s_i}_2,~ \norm{z_i}_{\sZ_i} = \norm{z_i}_2$. With each norm $\norm{t_i}_{\sT_i}$ we associate a 1-strongly convex w.r.t. this norm prox-function $d_{\sT_i}(t_i)$ and the corresponding Bregman divergence $B_{\sT_i}(t_i,\breve{t}_i) = d_{\sT_i}(t_i) - d_{\sT_i}(\breve t_i) - \angles{\nabla d_{\sT_i}(\breve t_i), t_i - \breve t_i}$. For dual variables we have prox-functions  $d_{\sS_i}(u_i)=\frac{1}{2}\norm{u_i}_{2}^2$, $d_{\sZ_i}(z_i)=\frac{1}{2}\norm{z_i}_2^2$ and Bregman divergences $B_{\sS_i}(s_i,\breve{s}_i)=\frac{1}{2}\norm{s_i-\breve{s}_i}_{2}^2$, $B_{\sZ_i}(z_i,\breve{z}_i)=\frac{1}{2}\norm{z_i-\breve{z}_i}_{2}^2$.

Having introduced $B_{\sT_i}(t_i, \breve t_i)$, we define the following \textit{Mirror step}:
\begin{equation}
    \label{eq:mirror}
    {\rm Mirr}(g_i;t_i;\mathcal{T}_i)=\argmin_{t\in \mathcal{T}_i}  \sbraces{\angles{g_i, t} + B_{\sT_i}(t,t_i)},
\end{equation}
where $g_i$ is an element of the corresponding dual space which defines the step direction.

Our decentralized algorithm for saddle-point problem \eqref{eq:problem_reform} is listed as Algorithm \ref{alg:mirror_prox_distr}. In each iteration each agent $i$ makes two Mirror step updates in each of its six local variables. Besides using respective gradient of the local objective $f_i$ to define the step direction, some updates include aggregating the variables of other agents. Consider for example the update $x_i^{k+\frac{1}{2}}$, in which agent $i$ needs to calculate the sum $\boldsymbol{\Sigma}_{j}[\mWx]_{ij}z_j^k$. Due to the Assumption \ref{assum:mixing_matrix}, i.e. network compatibility of matrix $W$, this update requires aggregating local only from neighbouring agents. Thus, each iteration of the algorithm is performed in a distributed manner.

In Algorithm \ref{alg:mirror_prox_distr}, we denote $\nabla_t f_i^\ell = \nabla_t f_i(x_i^\ell, p_i^\ell, y_i^\ell, q_i^\ell)$ for $t\in\braces{x, p, y, q}$ and $\ell\in\braces{k, k + \frac{1}{2}}$ 

In the next two subsections, we introduce two necessary components which allow us to prove the convergence rate theorem for our algorithm. These are smoothness assumptions and localization of the solution to the saddle-point problem \eqref{eq:problem_reform}. 

\setlength{\textfloatsep}{3pt}
\begin{algorithm}[ht!]
\caption{Decentralized Mirror-Prox}
{
\begin{algorithmic}[1]\label{alg:mirror_prox_distr}
    \REQUIRE{\add{Initial guess $(\bx^0, \bp^0, \by^0, \bq^0, \z^0, \bs^0)$ such that $\ds\bt = [t_1^\top, \ldots, t_m^\top]^\top$, $\ds t_i = \argmin_{t\in\sT_i} d_{\sT_i}(t)$ for each $t_i\in\braces{x_i, p_i, y_i, q_i, z_i, s_i}$ and corresponding set $\sT_i\in\braces{\bar\sX, \sP_i, \bar\sY, \sQ_i, \R^{d_y}, \R^{d_x}}$, $i = 1, \ldots, m$.}}
    \FOR{$k = 0, 1, \ldots, N - 1$}
        \STATE{Compute $\tilde\bx^k = \mWx\bx^k;~ \tilde\bs^k = \mWy\bs^k;~ \tilde\by^k = \mWy\by^k;~ \tilde\bz^k = \mWx\bz^k$.}
        \STATE{Each node computes
            \begin{align*}
                &x_i^{k+\frac{1}{2}} = {\rm Mirr}\left(
                \alpha(\nabla_{x} f_i^k
                + \tilde z_i^k); x_i^k; \bar\sX\right),~~~~
                s_i^{k+\frac{1}{2}} = {\rm Mirr} \cbraces{\alpha\tilde y_i^k; s_i^k; \R^{d_y}}, \\
                &y_i^{k+\frac{1}{2}} = {\rm Mirr} \cbraces{-\alpha(\nabla_r f_i^k + \tilde s_i^k); y_i^k; \bar\sY},~~
                z_i^{k+\frac{1}{2}} = {\rm Mirr} \cbraces{-\alpha\tilde x_i^k; z_i^k; \R^{d_x}}.
            \end{align*}
            }
        \STATE{Compute $\tilde\bx^{k+\frac{1}{2}} = \mWx\bx^{k+\frac{1}{2}};~ \tilde\bs^{k+\frac{1}{2}} = \mWy\bs^{k+\frac{1}{2}};~ \tilde\by^{k+\frac{1}{2}} = \mWy\by^{k+\frac{1}{2}};~ \tilde\bz^{k+\frac{1}{2}} = \mWx\bz^{k+\frac{1}{2}}$.}
        \STATE{Each node computes
            \begin{align*}
                &x_i^{k+1} = {\rm Mirr}\left(
                \alpha(\nabla_{p} f_i^{k+\frac{1}{2}}
                + \tilde z_i^{k+\frac{1}{2}}); x_i^k; \bar\sX\right),~~~~
                s_i^{k+1} = {\rm Mirr} \cbraces{\alpha \tilde y_i^{k+\frac{1}{2}}; s_i^k; \R^{d_y}}, \\
                &y_i^{k+1} = {\rm Mirr} \cbraces{-\alpha(\nabla_r f_i^{k+\frac{1}{2}} + \tilde s_i^{k+\frac{1}{2}}); y_i^k; \bar\sY},~~
                z_i^{k+1} = {\rm Mirr} \cbraces{-\alpha\tilde x_i^{k+\frac{1}{2}}; z_i^k; \R^{d_x}}.
            \end{align*}
            }
    \ENDFOR
    \ENSURE{For $\bt\in\braces{\bx, \bp, \by, \bq, \bs, \bz}$ compute $\ds\hat \bt^{N} = \frac{1}{N}\sum_{k=0}^{N-1} \bt^{k+\frac{1}{2}}$.}
\end{algorithmic}
}
\end{algorithm}

\subsection{Localizing the solution and smoothness assumptions}


As it was noted above, the standard analysis of Mirror-Prox requires the feasible sets to be compact. Although we run Mirror-Prox algorithm on problem~\eqref{eq:problem_reform} with unconstrained variables $\bs$ and $\bz$, we still can bound these variables according to Theorem~\ref{th:constrained_saddle}.


\begin{lemma}\label{lemma:constrained_lagrange_multipliers}
    Let Assumption \ref{assum:bounded_gradients} be satisfied and  
    $R_\sZ^2 = {2m} M_x^2(\lambda_{\min}^+(\mWx))^{-2}$, $R_\sS^2 = {2m} M_y^2(\lambda_{\min}^+(\mWy))^{-2}$,
    where $\lambda_{\min}^+(\cdot)$ is the minimal non-zero eigenvalue of matrix. Then there exists a saddle point $(\bx^*, \bp^*, \by^*, \bq^*, \bs^*, \bz^*)$ of problem \eqref{eq:problem_reform} such that $\norm{\bs^*}_2\leq R_\sS$, $\norm{\bz^*}_2\leq R_\sZ$.
\end{lemma}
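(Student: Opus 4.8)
The plan is to recognize $R_\sZ$ and $R_\sS$ as the special case $\bar\alpha=\bar\beta=2$ of Theorem~\ref{th:constrained_saddle} and then promote the constrained saddle point produced there into a saddle point of the fully unconstrained problem~\eqref{eq:problem_reform}. Indeed $R_\sZ^2 = 2m M_x^2/(\lambda_{\min}^+(\mWx))^2$ and $R_\sS^2 = 2m M_y^2/(\lambda_{\min}^+(\mWy))^2$ are exactly $\bar R_\sZ^2,\bar R_\sS^2$ with $\bar\alpha=\bar\beta=2\in(1,+\infty)$, so Theorem~\ref{th:constrained_saddle} applies and shows that \eqref{eq:problem_initial} is equivalent to \eqref{eq:problem_reform_constrained} with the multipliers confined to the balls $\norm{\bs}_2\le R_\sS$, $\norm{\bz}_2\le R_\sZ$. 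In this constrained reformulation every variable ranges over a compact convex set and the objective is convex in $(\bx,\bp,\bs)$ and concave in $(\by,\bq,\bz)$, so Sion--Kakutani guarantees the existence of a saddle point $(\bx^*,\bp^*,\by^*,\bq^*,\bs^*,\bz^*)$, and the norm bounds $\norm{\bs^*}_2\le R_\sS$, $\norm{\bz^*}_2\le R_\sZ$ then hold by construction. It remains to check that this point is also a saddle point of~\eqref{eq:problem_reform}.

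Write $\Phi(\bx,\bp,\bs;\by,\bq,\bz)=F(\bx,\bp,\by,\bq)+\angles{\bz,\mWx\bx}+\angles{\bs,\mWy\by}$ for the objective of~\eqref{eq:problem_reform}. Since $\Phi$ depends on $\bz$ only through the linear term $\angles{\bz,\mWx\bx^*}$ and on $\bs$ only through $\angles{\bs,\mWy\by^*}$, the unconstrained saddle inequalities can hold at $(\bx^*,\ldots,\bz^*)$ only if the consensus conditions $\mWx\bx^*=0$ and $\mWy\by^*=0$ are satisfied; otherwise sending $\bz$ (resp.\ $\bs$) to infinity along $\mWx\bx^*$ (resp.\ $\mWy\by^*$) would violate them. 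Thus the decisive step is to establish consensus. For this I would use the strict gap between the constraint radius and the optimal dual norm: by Assumption~\ref{assum:bounded_gradients}, $\norm{\nabla_\bx F}_2^2=\sum_{i=1}^m\norm{\nabla_{x_i}f_i}_2^2\le mM_x^2$, and because $\mWx$ is symmetric positive semi-definite we have $\sigma_{\min}^+(\mWx)=\lambda_{\min}^+(\mWx)$; hence Lemma~\ref{lemma:constrained_dual}, part~1, bounds the relevant optimal multiplier norm by $R_\nu\le\sqrt{m}M_x/\lambda_{\min}^+(\mWx)=R_\sZ/\sqrt2<R_\sZ$, with the analogous strict bound $R_\sS/\sqrt2<R_\sS$ for the $\bs$-block.

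With this strict slack in hand, the interiority argument of Lemma~\ref{lemma:constrained_dual}, part~2, applies: the optimal multipliers can be taken strictly inside their balls, and since the saddle-point set of the constrained convex--concave problem has the usual product (rectangular) structure, the corresponding primal block must make the penalized constraints active, i.e.\ $\mWx\bx^*=0$ and $\mWy\by^*=0$. Concretely, pairing the primal block with an interior optimal multiplier $\bz^*\in\mathrm{int}\,B_{R_\sZ}(0)$ gives $\angles{\bz^*-\bz,\mWx\bx^*}\ge0$ for every $\bz$ in the ball, which forces $\mWx\bx^*=0$; symmetrically $\mWy\by^*=0$. Once consensus holds, $\angles{\bz,\mWx\bx^*}=0$ for all $\bz\in\R^{m d_x}$ and $\angles{\bs,\mWy\by^*}=0$ for all $\bs\in\R^{m d_y}$, so the partial maximization over $\bz$ and minimization over $\bs$ yield the same value whether performed over the balls or over the whole space. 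Substituting this into the two saddle inequalities valid on the constrained domain extends them verbatim to the unconstrained feasible set, which proves that $(\bx^*,\bp^*,\by^*,\bq^*,\bs^*,\bz^*)$ is a saddle point of~\eqref{eq:problem_reform} with the desired bounds.

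The main obstacle is precisely the consensus step: turning the a-priori norm bound on the dual solution into the statement that the multipliers sit in the \emph{open} balls, so that the penalty terms vanish. This is where the slack factor $2>1$ is indispensable, and one must be careful to evaluate the gradient bound of Lemma~\ref{lemma:constrained_dual}, part~1, at the appropriate partial minimizer/maximizer (as in the proof of Theorem~\ref{th:constrained_saddle}); the subsequent extension of the inequalities from the balls to $\R^{m d_x}$ and $\R^{m d_y}$ is then routine.
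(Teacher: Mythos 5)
Your proposal is correct and follows essentially the same route as the paper: the paper's own proof is the one-liner ``put $\bar\alpha=\bar\beta=2$ in Theorem~\ref{th:constrained_saddle},'' and you do exactly that, additionally spelling out the promotion step (interior multipliers with the strict slack $\sqrt{2}$ forcing $\mWx\bx^*=0$, $\mWy\by^*=0$, after which the saddle inequalities extend from the balls to all of $\R^{md_x}\times\R^{md_y}$) that the paper leaves implicit in Theorem~\ref{th:constrained_saddle} and Lemma~\ref{lemma:constrained_dual}. Your filled-in details are consistent with the mechanism of Lemma~\ref{lemma:constrained_dual}, part~2, so this is a faithful, slightly more explicit version of the paper's argument.
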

\begin{proof}
Putting $\bar\alpha = \bar\beta = 2$ in Theorem~\ref{th:constrained_saddle}, we immediately obtain the proof.
\end{proof}

Next, we introduce the second important component of the convergence rate analysis, namely the smoothness assumption on the objective $F$. 
To set the stage we first introduce a general definition of Lipschitz-smooth function of two variables.
Having defined norms and Bregman divergences, let $t_i\in\braces{x_i,p_i,s_i,y_i,q_i,z_i}$ and correspondingly $\sT_i\in\braces{\bar\sX,\sP_i,\R^{d_y},\bar\sY,\sQ_i,\R^{d_x}}$. Introduce $\bt = (t_1^\top, \ldots, t_m^\top)^\top$ and $\sT = \sT_1\times\ldots\times\sT_m$ we define $\norm{\bt}_{\sT}^2 = \sum_{i=1}^m \norm{t_i}_{\sT_i}^2$. 
\begin{definition}
Consider norms $\norm{\cdot}_\xi,~ \norm{\cdot}_\eta$ and their dual norms $\norm{\cdot}_{\xi,*},~ \norm{\cdot}_{\eta,*}$. A differentiable function $G(\xi, \eta)$ is called $(L_{\xi\xi},L_{\xi\eta},L_{\eta\xi},L_{\eta\eta})$-smooth w.r.t. norms $\norm{\cdot}_\xi,~ \norm{\cdot}_\eta$ if
\begin{align*}
    &\norm{\nabla_\xi G(\xi, \eta) - \nabla_\xi G(\xi', \eta)}_{\xi,*} \leq L_{\xi\xi}\norm{\xi - \xi'}_{\xi}, \\
    &\norm{\nabla_\xi G(\xi, \eta) - \nabla_\xi G(\xi, \eta')}_{\xi,*} \leq L_{\xi\eta} \norm{\eta - \eta'}_{\eta}, \\
    &\norm{\nabla_\eta G(\xi, \eta) - \nabla_\eta G(\xi', \eta)}_{\eta,*} \leq L_{\eta\xi} \norm{\xi - \xi'}_{\xi}, \\
    &\norm{\nabla_\eta G(\xi, \eta) - \nabla_\eta G(\xi, \eta')}_{\eta,*} \leq L_{\eta\eta} \norm{\eta - \eta'}_{\eta}.
\end{align*}
\end{definition}

In order to formulate the smoothness assumptions on function $F$, we group the minimization variables $(\bx,\bp)$ and maximization variables $(\by,\bq)$.
\begin{assumption}\label{assum:F_Lip_smooth}
	The function $F$ is $(L_{(\bx,\bp)(\bx,\bp)},L_{(\bx,\bp)(\by,\bq)},L_{(\by,\bq)(\bx,\bp)},L_{(\by,\bq)(\by,\bq)})$-smooth w.r.t. norms $\norm{\cdot}_{(\sX,\sP)},~ \norm{\cdot}_{(\sY,\sQ)}$ defined as
    \begin{align*}
    	&\norm{\cbraces{\bx,\bp}}_{(\sX,\sP)}^2 = \norm{\bx}_{\sX}^2 + \norm{\bp}_{\sP}^2, \qquad \norm{\cbraces{\by,\bq}}_{(\sY,\sQ)}^2 = \norm{\by}_{\sY}^2 + \norm{\bq}_{\sQ}^2. \\
    \end{align*}
\end{assumption}

\subsection{Main result}

To present a convergence bound for Algorithm \ref{alg:mirror_prox_distr}, we  introduce additional notation. For each $\sT_i\in\braces{\bar\sX, \sP_i, \bar\sY, \sQ_i, \R^{d_y}, \R^{d_x}}$, we introduce 
\begin{equation}\label{eq:R_def_for_prox}
    \ds R_{\sT_i}^2 = \max_{t, t'\in\sT_i} d_{\sT_i}(t) - d_{\sT_i}(t').
\end{equation}


For simplicity of further derivations, we aggregate all the variables in two blocks: minimization variables and maximization variables. Introducing $\bu=(\bx^\top,\bp^\top,\bs^\top)^\top$ and $\bv=(\by^\top,\bq^\top,\bz^\top)^\top$  makes it possible  to rewrite the problem \eqref{eq:problem_reform} in the following simplified form.

\begin{equation}\label{eq:saddle_point}
    \min_{\bu \in A_\sU} \max_{\bv \in A_\sV} \ 
    S(\bu,\bv) \triangleq F(\bx, \bp, \by, \bq) + \angles{\bs, \mWy\by} + \angles{\bz, \mWx\bx},
\end{equation}
where $\sA_\sU = \sX\times\sP\times\R^{nd_q}$ and $\sA_\sV = \sY\times\sQ\times \R^{nd_p}$. The sets $A_{\sU}$ and $A_{\sV}$ are closed and convex but  unbounded. We also define $\sU = \sX\times\sP\times\{\bs:~ \norm{\bs}_2\leq R_\sS\}$ and  $\sV = \sY\times\sQ\times\{\bz:~ \norm{\bz}_2\leq R_\sZ\}$.  Convexity properties of $F$ imply that $S(\cdot, \bv)$ is convex for any fixed $\bv$ and $S(\bu, \cdot)$ is concave for any fixed $\bu$.

Introduce norms 
\begin{align}\label{eq:normsUV}
    \norm{\bu}_{\sU}^2 &= \frac{\norm{\bx}_{\sX}^2}{R_\sX^2} + \frac{\norm{\bp}_{\sP}^2}{R_\sP^2} + \frac{\norm{\bs}_{2}^2}{R_\sS^2}, \qquad
    \norm{\bv}_{\sV}^2 = \frac{\norm{\by}_{\sY}^2}{R_\sY^2} + \frac{\norm{\bq}_{\sQ}^2}{R_\sQ^2} + \frac{\norm{\bz}_{2}^2}{R_\sZ^2}.
\end{align}
\revoms{Also introduce prox-functions $d_\sU(\bu),~ d_\sV(\bv)$ and Bregman divergences
\begin{align*}
	B_\sU(\bu, \breve\bu) &= \frac{B_\sX(\bx, \breve\bx)}{R_\sX^2} + \frac{B_\sP(\bp, \breve\bp)}{R_\sP^2} + \frac{B_\sS(\bs, \breve\bs)}{R_\sS^2}, \\
	B_\sV(\bv, \breve\bv) &= \frac{B_\sY(\by, \breve\by)}{R_\sY^2} + \frac{B_\sQ(\bq, \breve\bq)}{R_\sQ^2} + \frac{B_\sZ(\bz, \breve\bz)}{R_\sZ^2}.
\end{align*}
}

\begin{lemma}\label{lemma:smothness_constants}
    Function $S(\bu,\bv)$ defined in \eqref{eq:saddle_point} is $(L_{\bu\bu}, L_{\bu\bv}, L_{\bv\bu}, L_{\bv\bv})$-smooth, where
    \begin{align*}
        L_{\bu\bu} &= L_{(\bx,\bp)(\bx,\bp)} (R_\sX^2 + R_\sP^2), \; L_{\bv\bv} = L_{(\by,\bq)(\by,\bq)} (R_\sY^2 + R_\sQ^2), \\
        L_{\bu\bv} &= \sqrt{2}\cbraces{L_{(\bx,\bp)(\by,\bq)} \sqrt{(R_\sX^2 + R_\sP^2)(R_\sY^2 + R_\sQ^2)} + R_\sX R_\sZ\norm{\mWx}_{2\to(\bx,*)} + R_\sY R_\sS\norm{\mWy}_{\by\to2}}, \\
        L_{\bv\bu} &= \sqrt{2}\cbraces{L_{(\by,\bq)(\bx,\bp)} \sqrt{(R_\sX^2 + R_\sP^2)(R_\sY^2 + R_\sQ^2)} + R_\sX R_\sZ\norm{\mWx}_{\bx\to2} + R_\sY R_\sS\norm{\mWy}_{2\to(\by,*)}}.
    \end{align*}
\end{lemma}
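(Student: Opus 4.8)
The plan is to verify each of the four defining inequalities of $(L_{\bu\bu},L_{\bu\bv},L_{\bv\bu},L_{\bv\bv})$-smoothness directly from the block structure of $S$. First I would record the block gradients: since $W_x,W_y$ are symmetric, so are $\mWx,\mWy$, and differentiating \eqref{eq:saddle_point} gives $\nabla_\bu S = (\nabla_\bx F + \mWx\bz,\ \nabla_\bp F,\ \mWy\by)$ and $\nabla_\bv S = (\nabla_\by F + \mWy\bs,\ \nabla_\bq F,\ \mWx\bx)$ in the blocks $(\bx,\bp,\bs)$ and $(\by,\bq,\bz)$. I would also note that the norm in \eqref{eq:normsUV} is block-diagonal with scalings $R_\sX,R_\sP,R_\sS$ (resp. $R_\sY,R_\sQ,R_\sZ$), so its dual is $\norm{(g_\bx,g_\bp,g_\bs)}_{\sU,*}^2 = R_\sX^2\norm{g_\bx}_{\sX,*}^2 + R_\sP^2\norm{g_\bp}_{\sP,*}^2 + R_\sS^2\norm{g_\bs}_2^2$, and symmetrically for $\sV$. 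Throughout I would repeatedly use the two elementary scaling inequalities $R_1^2 a^2 + R_2^2 b^2 \le (R_1^2+R_2^2)(a^2+b^2)$ and $a^2 + b^2 \le (R_1^2+R_2^2)\bigl(\tfrac{a^2}{R_1^2}+\tfrac{b^2}{R_2^2}\bigr)$ to move between the unscaled grouped norms of Assumption \ref{assum:F_Lip_smooth} and the scaled $\sU,\sV$ norms.

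For the diagonal constants $L_{\bu\bu}$ and $L_{\bv\bv}$, the key observation is that varying only $\bu$ leaves the bilinear term $\mWx\bz$ in $\nabla_\bu S$ unchanged and annihilates the $\mWy\by$-block difference, so $\nabla_\bu S(\bu,\bv) - \nabla_\bu S(\bu',\bv)$ reduces to the pure $F$-gradient difference $(\nabla_{(\bx,\bp)}F - \nabla_{(\bx,\bp)}F',\,0)$. I would then chain the two scaling inequalities around the $(\bx,\bp)(\bx,\bp)$-smoothness of $F$: the first pulls a factor $(R_\sX^2+R_\sP^2)$ out of the dual norm, and the second pulls a second factor $(R_\sX^2+R_\sP^2)$ out of $\norm{(\bx-\bx',\bp-\bp')}_{(\sX,\sP)}^2$, yielding exactly $L_{\bu\bu} = L_{(\bx,\bp)(\bx,\bp)}(R_\sX^2+R_\sP^2)$; the case of $L_{\bv\bv}$ is identical with $(\by,\bq)$ replacing $(\bx,\bp)$.

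For the off-diagonal constants I would split the gradient difference into a smoothness part and a communication part. Taking $L_{\bu\bv}$, varying $\bv=(\by,\bq,\bz)$ gives
\[
\nabla_\bu S(\bu,\bv) - \nabla_\bu S(\bu,\bv') = \underbrace{\bigl(\nabla_{(\bx,\bp)}F - \nabla_{(\bx,\bp)}F',\, 0\bigr)}_{\text{smoothness}} + \underbrace{\bigl(\mWx(\bz-\bz'),\, 0,\, \mWy(\by-\by')\bigr)}_{\text{communication}},
\]
where the $F$-gradients are evaluated at the two values of $(\by,\bq)$. Applying $\norm{a+b}_{\sU,*}^2\le 2\norm{a}_{\sU,*}^2 + 2\norm{b}_{\sU,*}^2$ to separate these two parts is exactly what produces the overall factor $\sqrt2$. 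The smoothness part is bounded as in the diagonal case but with the cross-constant, giving $L_{(\bx,\bp)(\by,\bq)}\sqrt{(R_\sX^2+R_\sP^2)(R_\sY^2+R_\sQ^2)}$. For the communication part I would bound each block by its operator norm: the $\bx$-block $\norm{\mWx(\bz-\bz')}_{\sX,*}\le\norm{\mWx}_{2\to(\bx,*)}\norm{\bz-\bz'}_2$ contributes $R_\sX R_\sZ\norm{\mWx}_{2\to(\bx,*)}$ after matching the output factor $R_\sX$ and input factor $R_\sZ$, and the $\bs$-block $\norm{\mWy(\by-\by')}_2\le\norm{\mWy}_{\by\to2}\norm{\by-\by'}_\sY$ contributes $R_\sY R_\sS\norm{\mWy}_{\by\to2}$. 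Since the two communication contributions sit in different blocks, $\sqrt{\alpha^2+\beta^2+\gamma^2}\le\alpha+\beta+\gamma$ collapses them into the stated sum, yielding $L_{\bu\bv}$; the bound for $L_{\bv\bu}$ follows verbatim with the transposed operator norms $\norm{\mWx}_{\bx\to2}$, $\norm{\mWy}_{2\to(\by,*)}$ and the cross-constant $L_{(\by,\bq)(\bx,\bp)}$.

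I expect the main obstacle to be bookkeeping rather than conceptual. For every cross term one must keep straight which block factor of the scaled dual norm ($R_\sX$ versus $R_\sS$, or $R_\sY$ versus $R_\sZ$) multiplies the operator norm, and which of the four directional operator norms ($2\to(\bx,*)$, $\bx\to2$, $\by\to2$, $2\to(\by,*)$) matches the direction in which $\mWx$ or $\mWy$ acts (dual-to-primal versus primal-to-dual). A secondary subtlety is that the $F$-smoothness of Assumption \ref{assum:F_Lip_smooth} is stated in the \emph{unscaled} grouped norms, so the two scaling inequalities must be applied in the correct order — once on the gradient (dual) side and once on the argument side — to recover the factor $(R_\sX^2+R_\sP^2)$ and the product $\sqrt{(R_\sX^2+R_\sP^2)(R_\sY^2+R_\sQ^2)}$ cleanly.
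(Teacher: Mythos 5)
Your proposal is correct and follows essentially the same route as the paper's appendix proof: the same block gradients, the same scaled dual-norm bookkeeping, the diagonal constants via the two scaling inequalities, and the off-diagonal constants via the $(a+b)^2\le 2a^2+2b^2$ split (the source of the $\sqrt{2}$) together with the four directional operator norms of $\mWx,\mWy$. The only cosmetic difference is the final step, where you collapse the three squared contributions with $\sqrt{A^2+B^2+C^2}\le A+B+C$ to obtain exactly the sum stated in the lemma, whereas the paper's appendix records a $\sqrt{2}\max\{\cdot\}$ form that does not literally follow from its own preceding display (the sum form, as you derive it, is the one that does).
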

The proof is mostly technical and therefore omitted to the Appendix \ref{app:mp_smoothness_constants}.

Let us now discuss the second main aspect of the analysis, i.e. unboundedness of the feasible sets $A_\sU$, $A_\sV$, which does not allow to directly apply the standard analysis of \cite{nemirovski2004prox}. Due to the special structure of the problem \eqref{eq:problem_reform} and the Assumption \ref{assum:bounded_gradients}, we have at the disposal bounds $R_\sZ, R_\sS$ defined in Lemma \ref{lemma:constrained_lagrange_multipliers} such that the optimal values $\bz^*$ and $\bs^*$ satisfy $\norm{\bz^*}_2\leq R_\sZ$, $\norm{\bs^*}_2\leq R_\sS$. We have that the saddle-point $(\bu^*,\bv^*)$ belongs to  $\sU\times \sV$. Let us define 
\begin{align}\label{eq:def_L_zeta}
L_\zeta=2\max\{L_{\bu\bu},L_{\bu\bv},L_{\bv\bu},L_{\bv\bv}\}.
\end{align}

Finally, applying the analysis of Mirror-Prox, we obtain the main result.
\begin{theorem}
\label{Th:distr_MP}
Let Assumptions \ref{assum:mixing_matrix}, \ref{assum:bounded_gradients} and \ref{assum:F_Lip_smooth} hold. \add{Let Algorithm \ref{alg:mirror_prox_distr} be run for $N$ iterations with stepsize $\alpha=1/L_\zeta$, where $L_\zeta$ is defined in \eqref{eq:def_L_zeta}. Introduce $\ol{\hat\bt}^N = \frac{1}{m}\one\one^\top \hat\bt^N$ for $\bt\in\braces{\bx, \by, \bp, \bq}$.}
We have
\begin{align*}
    &\add{\max_{\substack{\by\in\sY, \bq\in\sQ \\ \mWy\by=0}} F\cbraces{\ol{\hat\bx}^N, \hat\bp^N, \by, \bq} - \min_{\substack{\bx\in\sX, \bp\in\sP \\ \mWx\bx = 0}} F(\bx, \bp, \ol{\hat\by}^N, \hat\bq^N)\leq \frac{(4 + 17\sqrt{2})L_\zeta}{N}}
\end{align*}
and
\begin{align*}
    \norm{\mWx\hat\bx^N}_2\leq \frac{17L_\zeta}{NR_\sZ}, \qquad \norm{\mWy\hat\by^N}_2\leq \frac{17L_\zeta}{NR_\sS}.
\end{align*}
\end{theorem}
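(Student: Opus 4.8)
The plan is to recognize Algorithm~\ref{alg:mirror_prox_distr} as the Mirror-Prox method applied to the monotone operator $\Phi(\bu,\bv) = (\nabla_\bu S(\bu,\bv),\, -\nabla_\bv S(\bu,\bv))$ associated with the saddle-point problem~\eqref{eq:saddle_point}, and then to convert the standard Mirror-Prox gap guarantee into the two claimed bounds. By Lemma~\ref{lemma:smothness_constants} the operator $\Phi$ is Lipschitz with constant controlled by $L_\zeta$ in the block norms~\eqref{eq:normsUV}, so the stepsize $\alpha = 1/L_\zeta$ is admissible. Although $\sA_\sU, \sA_\sV$ are unbounded in $\bs,\bz$, Lemma~\ref{lemma:constrained_lagrange_multipliers} places a saddle point inside the compact set $\sU\times\sV$, so I would run the analysis of~\cite{nemirovski2004prox} restricted to $\sU\times\sV$. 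Combining the per-iteration prox inequality with convexity--concavity of $S$ and Jensen's inequality for the averaged iterate $(\hat\bu^N,\hat\bv^N)$ gives
\begin{equation*}
    \max_{\bv\in\sV} S(\hat\bu^N,\bv) - \min_{\bu\in\sU} S(\bu,\hat\bv^N) \le \frac{C_0 L_\zeta}{N},
\end{equation*}
where $C_0$ is an absolute constant, since the norms~\eqref{eq:normsUV} normalize each block by its prox-radius, making the prox-diameter of $\sU\times\sV$ bounded independently of the data.

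Next I would expand this gap using the explicit form of $S$. Maximizing over $\bz$ in the ball of radius $R_\sZ$ produces exactly the term $R_\sZ\norm{\mWx\hat\bx^N}_2$, and minimizing over $\bs$ in the ball of radius $R_\sS$ produces $R_\sS\norm{\mWy\hat\by^N}_2$; this is the step that surfaces the consensus residuals. Writing $A = \max_{\by\in\sY,\bq\in\sQ}[F(\hat\bx^N,\hat\bp^N,\by,\bq)+\angles{\hat\bs^N,\mWy\by}]$ and $B = \min_{\bx\in\sX,\bp\in\sP}[F(\bx,\bp,\hat\by^N,\hat\bq^N)+\angles{\hat\bz^N,\mWx\bx}]$, the left-hand side equals $(A-B) + R_\sZ\norm{\mWx\hat\bx^N}_2 + R_\sS\norm{\mWy\hat\by^N}_2$.

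I would then lower-bound $A-B$ by the true duality gap $\mathrm{Gap}_{\mathrm{orig}} := \max_{\by,\bq:\,\mWy\by=0} F(\ol{\hat\bx}^N,\hat\bp^N,\by,\bq) - \min_{\bx,\bp:\,\mWx\bx=0} F(\bx,\bp,\ol{\hat\by}^N,\hat\bq^N)$. Two ingredients are used: restricting the inner maximization (resp.\ minimization) to the consensus subspace $\{\mWy\by=0\}$ (resp.\ $\{\mWx\bx=0\}$), on which the Lagrangian inner products vanish, and replacing $\hat\bx^N$ by its consensus projection $\ol{\hat\bx}^N$. The replacement error is controlled by the bounded-gradient Assumption~\ref{assum:bounded_gradients}: $|F(\hat\bx^N,\cdots)-F(\ol{\hat\bx}^N,\cdots)| \le \sqrt m\, M_x \norm{\hat\bx^N-\ol{\hat\bx}^N}_2$, and since $\ol{\hat\bx}^N$ is the projection onto $\kernel\mWx$ one has $\norm{\hat\bx^N-\ol{\hat\bx}^N}_2 \le \norm{\mWx\hat\bx^N}_2/\lambda_{\min}^+(\mWx)$. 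Using $R_\sZ = \sqrt{2m}\,M_x/\lambda_{\min}^+(\mWx)$ this error is at most $\tfrac{1}{\sqrt2}R_\sZ\norm{\mWx\hat\bx^N}_2$ (and symmetrically for $\by$), so that $A-B \ge \mathrm{Gap}_{\mathrm{orig}} - \tfrac{1}{\sqrt2}R_\sZ\norm{\mWx\hat\bx^N}_2 - \tfrac{1}{\sqrt2}R_\sS\norm{\mWy\hat\by^N}_2$.

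Finally I would close the argument with weak duality, which gives $\mathrm{Gap}_{\mathrm{orig}}\ge 0$ (the primal value dominates the dual value at any consensus-feasible point). Substituting the lower bound for $A-B$ into the gap identity yields $\tfrac{C_0 L_\zeta}{N} \ge (1-\tfrac1{\sqrt2})\big(R_\sZ\norm{\mWx\hat\bx^N}_2 + R_\sS\norm{\mWy\hat\by^N}_2\big)$, from which the consensus bounds $\norm{\mWx\hat\bx^N}_2 \le 17L_\zeta/(NR_\sZ)$ and $\norm{\mWy\hat\by^N}_2 \le 17L_\zeta/(NR_\sS)$ follow after fixing $C_0$. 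Feeding these back into $\mathrm{Gap}_{\mathrm{orig}} \le (A-B) + \tfrac1{\sqrt2}R_\sZ\norm{\mWx\hat\bx^N}_2 + \tfrac1{\sqrt2}R_\sS\norm{\mWy\hat\by^N}_2$ and using $A-B\le C_0 L_\zeta/N$ produces the gap bound $(4+17\sqrt2)L_\zeta/N$. The main obstacle is precisely this coupling: the bound on $A-B$ needs the consensus estimates, while the consensus estimates are themselves extracted from the decomposed gap; weak duality ($\mathrm{Gap}_{\mathrm{orig}}\ge 0$) is what breaks the circularity, letting me first isolate the consensus residuals and then substitute them back. Care is also needed to justify the Jensen step for the ergodic iterate and to handle the unbounded dual feasible set by working on $\sU\times\sV$.
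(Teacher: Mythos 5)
Your proposal is correct and arrives at the same $O(L_\zeta/N)$ bounds, but the mechanism by which you extract the consensus residuals is genuinely different from the paper's. Both arguments share the skeleton: identify Algorithm~\ref{alg:mirror_prox_distr} with Mirror-Prox for the operator $g$ on the \emph{unbounded} feasible set, derive the ergodic regret bound $S(\hat\bu^N,\bv)-S(\bu,\hat\bv^N)\le \tfrac{L_\zeta}{N}B_\zeta(\zeta,\zeta^0)$ for arbitrary comparison points, and use the scaled norms \eqref{eq:normsUV} to make the Bregman term an absolute constant on $\sU\times\sV$ (your phrasing ``run the analysis restricted to $\sU\times\sV$'' should be read as restricting only the comparison points, since the iterates in $\bs,\bz$ are computed unconstrained and may leave $\sU\times\sV$ --- your later remarks suggest you understand this); the correction from $\hat\bx^N,\hat\by^N$ to $\ol{\hat\bx}^N,\ol{\hat\by}^N$ via bounded gradients and $\norm{\hat\bx^N-\ol{\hat\bx}^N}_2\le\norm{\mWx\hat\bx^N}_2/\lambda_{\min}^+(\mWx)$ is identical to the paper's. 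The divergence is the consensus bound: the paper substitutes the shifted dual comparison point $\bz=\hat\bz^N+R_\sZ\,\mWx\hat\bx^N/\norm{\mWx\hat\bx^N}_2$ (which lies \emph{outside} $B_{R_\sZ}(0)$) with the primal comparison points equal to the ergodic iterates, cancelling $F$, and must then control the ergodic dual iterate via $\norm{\hat\bz^N}_2^2\le 2R_\sZ^2B_\zeta(\zeta^*,\zeta^0)\le 12R_\sZ^2$ --- a step resting on the claim $B_\zeta(\hat\zeta^N,\zeta^0)\le B_\zeta(\zeta^*,\zeta^0)$, which the paper asserts with little justification. You avoid both of these: your decomposition of the restricted gap as $(A-B)+R_\sZ\norm{\mWx\hat\bx^N}_2+R_\sS\norm{\mWy\hat\by^N}_2$ together with weak duality at the consensus-feasible averaged point (valid, since $\mWx\ol{\hat\bx}^N=0$ and $\ol{\hat\bx}^N\in\sX$, $\ol{\hat\by}^N\in\sY$ by convexity of the local sets) cleanly breaks the circularity with no estimate of $\norm{\hat\bz^N}_2$ at all, which is arguably tidier. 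The price is a marginal loss in constants: with $C_0=4+\tfrac12+\tfrac12=5$ (four primal blocks plus the two dual balls) your chain yields $R_\sZ\norm{\mWx\hat\bx^N}_2\le(2+\sqrt2)\,5\,L_\zeta/N=(10+5\sqrt2)L_\zeta/N\approx 17.07\,L_\zeta/N$ and a gap bound of $(15+10\sqrt2)L_\zeta/N\approx 29.1\,L_\zeta/N$, slightly above the stated $17L_\zeta/(NR_\sZ)$ and $(4+17\sqrt2)L_\zeta/N\approx 28.0\,L_\zeta/N$; so ``fixing $C_0$'' cannot quite reproduce the theorem's exact constants (you could recover the paper's $4$ in place of $5$ for the $A-B$ part by taking $\bz=\bs=0$ as comparison points there), but the discrepancy is immaterial to the result.
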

\revoms{
Constant $L_\zeta$ is dependent on problem parameters as defined in~\eqref{eq:def_L_zeta}. Let us provide an explicit dependence for completeness.
\begin{corollary}
	Under conditions of Theorem~\ref{Th:distr_MP} we have
	\begin{align*}
		&\max_{\substack{\by\in\sY, \bq\in\sQ \\ \mWy\by=0}} F\cbraces{\ol{\hat\bx}^N, \hat\bp^N, \by, \bq} - \min_{\substack{\bx\in\sX, \bp\in\sP \\ \mWx\bx = 0}} F(\bx, \bp, \ol{\hat\by}^N, \hat\bq^N) \\
		&\quad\leq\frac{(72+ 8\sqrt{2})m}{N} \Bigg(L_{(\bx,\bp)(\bx,\bp)}(R_\sX^2 + R_\sP^2) + L_{(\by,\bq)(\by,\bq)}(R_\sY^2 + R_\sP^2) \\
		&\qquad+ (L_{(\bx,\bp)(\by,\bq)} + L_{(\by,\bq)(\bx,\bp)})\sqrt{(R_\sX^2 + R_\sP^2)(R_\sY^2 + R_\sQ^2)} \\
		&\qquad+ \frac{M_xR_\sX(\norm{\mWx}_{2\to(\bx,*)} + \norm{\mWx}_{\bx\to 2})}{\lambda_{\min}^+(\mWx)} + \frac{M_yR_\sY(\norm{\mWy}_{2\to(\by,*)} + \norm{\mWy}_{\by\to 2})}{\lambda_{\min}^+(\mWy)} \Bigg).
	\end{align*}
\end{corollary}
}
To prove Theorem \ref{Th:distr_MP} we first show that the iterates of Algorithm \ref{alg:mirror_prox_distr} naturally correspond to the iterates of a general Mirror-Prox algorithm applied to problem \eqref{eq:problem_reform}. Then we extend the standard analysis of the general Mirror-Prox algorithm to account for unbounded feasible sets. 

The feasible set $\sA_\sU \times \sA_\sV$ is separable w.r.t. each local feasible set $\bar\sX \times \sP_i \times  \R^{d_q} \times \bar\sY \times \bar\sQ_i \times  \R^{d_p}$ for an agent $i$. Thus, if we define a variable $\zeta = (\bu^\top , \bv^\top)^\top \in \sA := \sA_\sU \times \sA_\sV$, and the operator $g(\zeta)$
\begin{align*}
    g(\zeta) = \begin{bmatrix*}[l]
        ~~\nabla_\bu S(\bu,\bv) \\
        ~~ -\nabla_\bv S(\bu,\bv) 
    \end{bmatrix*}=
    \begin{bmatrix*}[l]
        ~~\nabla_\bx F(\bx, \bp, \by, \bq) + \mWx\bz \\
        ~~\nabla_\bp F(\bx, \bp, \by, \bq) \\
        ~~\mWy\by \\
        -\nabla_\by F(\bx, \bp, \by, \bq) - \mWy\bs \\
        -\nabla_\bq F(\bx, \bp, \by, \bq) \\
        -\mWx\bx
    \end{bmatrix*},
\end{align*}
then the updates of Algorithm \ref{alg:mirror_prox_distr} are equivalent to the updates of a general Mirror-Prox algorithm listed as Algorithm \ref{alg:mirror_prox_general}.

\begin{algorithm}[H]
\caption{Mirror-Prox}
\label{alg:mirror_prox_general}
\begin{algorithmic}[1]
    \REQUIRE{Starting point $\ds\add{\zeta^0 = \argmin_{\zeta\in Q} d_\zeta(\zeta)}$, stepsize $\alpha > 0$.}
    \FOR{$k = 0, 1, \ldots$}
        \STATE 
        \begin{align}
        \label{eq:gen_MP_steps}
            \zeta^{k+\frac{1}{2}} &= \argmin_{\zeta \in Q} \alpha\angles{g(\zeta^k), \zeta} + B(\zeta, \zeta^k) \\
            \zeta^{k+1} &= \argmin_{\zeta \in Q} \alpha\angles{g(\zeta^{k+\frac{1}{2}}), \zeta} + B(\zeta, \zeta^k)
        \end{align}
    \ENDFOR
    \ENSURE       $\hat{\zeta}^k = \frac{1}{k}\sum_{\ell=0}^{k-1}\zeta^{\ell+\frac{1}{2}}$.
\end{algorithmic}
\end{algorithm}

Next, we analyze the general Mirror-Prox Algorithm \ref{alg:mirror_prox_general}.
Let us define the norm $\norm{\zeta}_{\zeta}^2 = \norm{\bu}_{\sU}^2 + \norm{\bv}_{\sV}^2$, the corresponding prox-function $d_{\zeta}(\zeta)$ and Bregman divergence $B_{\zeta}(\zeta,\breve \zeta)=B_{\bu}(\bu,\breve \bu)+B_{\bv}(\bv,\breve \bv)$. Note that the corresponding dual norm is $\norm{\zeta}_{\zeta,*}^2=\norm{\bu}_{\sU,*}^2+\norm{\bv}_{\sV,*}^2$. Under these definitions, following the standard analysis in \cite{bubeck2014theory}, we obtain that the operator $g(\zeta)$ is $L_{\zeta}$-Lipschitz-continuous with respect to the norm $\norm{\zeta}_{\zeta}$ with
\begin{equation}
\label{eq:L_zeta}
  L_{\zeta}=2\max\{L_{\bu\bu},L_{\bv\bv},L_{\bu\bv},L_{\bv\bu}\}.
\end{equation}

Next, we analyze the iterations of Algorithm \ref{alg:mirror_prox_general} under the assumption that the operator $g(\zeta)$ is $L_{\zeta}$-Lipschitz-continuous.
Let us fix some iteration $k\geq 0$. 
By the optimality conditions in \eqref{eq:gen_MP_steps}, we have, for any $\zeta\in Q$,
\begin{align}
 & \alpha \langle g(\zeta^k) + \nabla d_{\zeta}(\zeta^{k+\frac{1}{2}}) -  \nabla d_{\zeta}(\zeta^k), \zeta- \zeta^{k+\frac{1}{2}} \rangle \geq 0, \label{eq:Th:UMPGenRatePr1} \\
 &\alpha \langle g(\zeta^{k+\frac{1}{2}}) + \nabla d_{\zeta}(\zeta^{k+1}) - \nabla d_{\zeta}(\zeta^k), \zeta - \zeta^{k+1} \rangle \geq 0. \label{eq:Th:UMPGenRatePr2}
\end{align}
Whence, for all $\zeta\in Q$,
\begin{align*}
\langle g(\zeta^{k+\frac{1}{2}})&, \zeta^{k+\frac{1}{2}} - \zeta \rangle  =  \langle g(\zeta^{k+\frac{1}{2}}) , \zeta^{k+1} - \zeta \rangle + \langle g(\zeta^{k+\frac{1}{2}}), \zeta^{k+\frac{1}{2}} - \zeta^{k+1}\rangle \\
& \hspace{-1em} \stackrel{\eqref{eq:Th:UMPGenRatePr2} }{\leq}   \frac{1}{\alpha} \langle \nabla d_{\zeta}(\zeta^k) -  \nabla d_{\zeta}(\zeta^{k+1}), \zeta^{k+1} - \zeta \rangle + \langle g(\zeta^{k+\frac{1}{2}}), \zeta^{k+\frac{1}{2}} - \zeta^{k+1}\rangle \\
& \hspace{-1em} =  \frac{1}{\alpha} B_{\zeta}(\zeta,\zeta^{k})-\frac{1}{\alpha} B_{\zeta}(\zeta,\zeta^{k+1}) - \frac{1}{\alpha} B_{\zeta}(\zeta^{k+1},\zeta^{k}) + \langle g(\zeta^{k+\frac{1}{2}}), \zeta^{k+\frac{1}{2}} - \zeta^{k+1}\rangle, 
\end{align*}
where the last equality uses the definition of Bregman divergence $B_{\zeta}(\zeta,\breve{\zeta}) = d_{\zeta}(\zeta)-(d_{\zeta}(\breve{\zeta})+\langle \nabla d_{\zeta}(\breve{\zeta}), \zeta - \breve{\zeta}\rangle)$. 
Further, for all $\zeta\in Q$,
\begin{align*}
&\langle g(\zeta^{k+\frac{1}{2}}), \zeta^{k+\frac{1}{2}} - \zeta^{k+1}\rangle - \frac{1}{\alpha} B_{\zeta}(\zeta^{k+1},\zeta^{k}) \\
&\quad=  \langle g(\zeta^{k+\frac{1}{2}}) - g(\zeta^{k}) , \zeta^{k+\frac{1}{2}} - \zeta^{k+1}\rangle - \frac{1}{\alpha} B_{\zeta}(\zeta^{k+1},\zeta^{k}) + \langle g(\zeta^{k}) , \zeta^{k+\frac{1}{2}} - \zeta^{k+1}\rangle \\
& 
\stackrel{\eqref{eq:Th:UMPGenRatePr1} \; \text{with} \; \zeta=\zeta^{k+1}}{\leq}
\langle g(\zeta^{k+\frac{1}{2}}) - g(\zeta^{k}) , \zeta^{k+\frac{1}{2}} - \zeta^{k+1}\rangle \\
&\qquad\qquad+ \frac{1}{\alpha} \langle \nabla d_{\zeta}(\zeta^k) -  \nabla d_{\zeta}(\zeta^{k+\frac{1}{2}}), \zeta^{k+\frac{1}{2}} - \zeta^{k+1} \rangle - \frac{1}{\alpha} B_{\zeta}(\zeta^{k+1},\zeta^{k})  \\
&\quad =  \langle g(\zeta^{k+\frac{1}{2}}) - g(\zeta^{k}) , \zeta^{k+\frac{1}{2}} - \zeta^{k+1}\rangle  - \frac{1}{\alpha}B(\zeta^{k+\frac{1}{2}},\zeta^{k}) - \frac{1}{\alpha} 
B(\zeta^{k+1},\zeta^{k+\frac{1}{2}})\\
&\quad\stackrel{\circledOne}{\leq}
\|g(\zeta^{k+\frac{1}{2}}) - g(\zeta^{k})\|_{\zeta,*} \|\zeta^{k+\frac{1}{2}} - \zeta^{k+1}\|_{\zeta} - \frac{1}{2\alpha}\left( \|\zeta^{k+\frac{1}{2}} -\zeta^{k}\|_{\zeta}^2 + \|\zeta^{k+\frac{1}{2}} - \zeta^{k+1}\|_{\zeta}^2  \right) \\
&\quad \stackrel{\circledTwo}{\leq}
L_{\zeta}\|\zeta^{k+\frac{1}{2}} - \zeta^{k})\|_{\zeta} \|\zeta^{k+\frac{1}{2}} - \zeta^{k+1}\|_{\zeta} - \frac{L_{\zeta}}{2}\left( \|\zeta^{k+\frac{1}{2}} -\zeta^{k}\|_{\zeta}^2 + \|\zeta^{k+\frac{1}{2}} - \zeta^{k+1}\|_{\zeta}^2  \right) \leq 0,
\end{align*}
\revoms{where in $\circledOne$ we used that $B_{\zeta}(\zeta,\breve{\zeta}) \geq \frac{1}{2}\revoms{\|\zeta-\breve{\zeta}\|_{\zeta}^2}$ and in $\circledTwo$ we substituted $\alpha = 1/L_\zeta$ and used that $g(\zeta)$ is $L_{\zeta}$-Lipschitz-continuous}.

Combining the above two inequalities and the choice $\alpha=1/L_{\zeta}$, we obtain, for all $\zeta\in Q$ and $i \geq 0$,
$$
    \langle g(\zeta^{i+\frac{1}{2}}), \zeta^{i+\frac{1}{2}} - \zeta \rangle \leq  L_{\zeta} B_{\zeta}(\zeta,\zeta^{i})-L_{\zeta} B_{\zeta}(\zeta,\zeta^{i+1}).
$$
Summing up these inequalities for $i$ from 0 to $k-1$, we have:
$$
    \sum_{i=0}^{k-1} \langle g(\zeta^{i+\frac{1}{2}}), \zeta^{i+\frac{1}{2}} - \zeta \rangle \leq  L_{\zeta} (B_{\zeta}(\zeta,\zeta^{0})- B_{\zeta}(\zeta,\zeta^{k})) \leq L_{\zeta} B_{\zeta}(\zeta,\zeta^{0}).
$$

Now we use the connection between $S(\bu,\bv)$ and the operator $g(\zeta)$.
By convexity of $S(\bu,\bv)$ in $\bu$ and concavity of $S(\bu,\bv)$ w.r.t $\bv$, we have, for all $\bu \in \sA_\sU$,
\begin{align*}
\frac{1}{k} \sum_{i=0}^{k-1} \left\langle \nabla_{\bu}
S(\bu^{i},\bv^{i}),\bu^{i}-\bu \right \rangle & \geq  \frac{1}{k} \sum_{i=0}^{k-1} (S(\bu^{i},\bv^{i})-S(\bu,\bv^{i})) \geq  \frac{1}{k} \sum_{i=0}^{k-1}  S(\bu^{i},\bv^{i})-S(\bu,\widehat{\bv}^k).
  \end{align*}
In the same way, we obtain, for all $\bv \in \sA_\sV$,
$$
    \frac{1}{k} \sum_{i=0}^{k-1} \left\langle -\nabla_{\bv} S(\bu^{i},\bv^{i}),\bv^{i}-\bv\right \rangle \geq -\frac{1}{k} \sum_{i=0}^{k-1} S(\bu^{i},\bv^{i})+S(\widehat{\bu}^k,\bv).
$$
Summing these inequalities, by the definition of $g(\zeta)$ we obtain that, for all $\bu\in \sA_\sU$, $\bv\in \sA_\sV$,
$$
    S(\widehat{\bu}^k,\bv) - S(\bu,\widehat{\bv}^k) \leq  \frac{1}{k} \sum_{i=0}^{k-1}\langle g(\zeta^{i+\frac{1}{2}}), \zeta^{i+\frac{1}{2}} - \bu \rangle \leq \frac{L_{\zeta}}{k}  B_{\zeta}(\zeta,\zeta^{0}).
$$

It remains to deduce the target accuracy in the value of function $F$ and to the solution accuracy of problem \eqref{eq:problem_reform}. Let $\hat\bu^N = ((\hat\bx^N)^\top, (\hat\bp^N)^\top, (\hat\bs^N)^\top)^\top$, $\hat\bv^N = ((\hat\by^N)^\top, (\hat\bq^N)^\top, (\hat\bz^N)^\top)^\top$. For all $\bx\in\sX,~ \bp\in\sP,~ \by\in\sY,~ \bq\in\sQ,~ \bs\in B_{R_\sS}(0),~ \bz\in B_{R_\sZ}(0)$, we have
\begin{align*}
    &F(\hat\bx^N, \hat\bp^N, \by, \bq) + \angles{\bz, \mWx\hat\bx^N} + \angles{\hat\bs^N, \mWy\by} - F(\bx, \bp, \hat\by^N, \hat\bq^N) - \angles{\hat\bz^N, \mWx\bx} - \angles{\bs, \mWy\hat\by^N} \\
    &\qquad\leq \frac{L_\zeta B_\zeta(\zeta, \zeta^0)}{N}.
\end{align*}

To get an upper bound on consensus residual, we substitute $\bx = \hat\bx^N,~ \by = \hat\by^N,~ \bp = \hat\bp^N,~ \bq = \hat\bq^N,~ \bz = \hat\bz^N + \frac{\mWx\hat\bx^N R_\sZ}{\norm{\mWx\hat\bx^N}_2},~ \bs = 0$ and get
\begin{align*}
    R_\sZ&\angles{\frac{\mWx\hat\bx^N}{\norm{\mWx\hat\bx^N}}, \mWx\hat\bx^N} \\
    &\leq \frac{L_\zeta}{N} \cbraces{\frac{B_\bx(\bx^N, \bx^0)}{R_\sX^2} + \frac{B_\by(\by^N, \by^0)}{R_\sY^2} + \frac{B_\bp(\bp^N, \bp^0)}{R_\sP^2} + \frac{B_\bq(\bq^N, \bq^0)}{R_\sQ^2}} \\
    &\quad+ \frac{L_\zeta}{2NR_\sZ^2}\norm{\hat\bz^N + \frac{\mWx\bx^N R_\sZ}{\norm{\mWx\hat\bx^N}}}_2^2 \\
    &\leq \frac{4L_\zeta}{N} + \frac{L_\zeta}{N}\cbraces{\frac{\norm{\hat\bz^N}_2^2}{R_\sZ^2} + 1}
    \overset{\circledOne}{\leq} \frac{17L_\zeta}{N}.
\end{align*}
Here in $\circledOne$ we bounded $\norm{\hat\bz^N}_2$ the following way:
\revoms{
\begin{align*}
    &\norm{\hat\bz^N}_2^2 \leq 2R_\sZ^2 B_\zeta(\hat\zeta^N, \zeta^0) \leq 2R_\sZ^2 B_\zeta(\zeta^*, \zeta^0) \\
    &\quad\leq 2R_\sZ^2\cbraces{\frac{B_\sX(\bx^*, \bx^0)}{R_\sX^2} + \frac{B_\sY(\by^*, \by^0)}{R_\sY^2} + \frac{B_\sP(\bp^*, \bp^0)}{R_\sP^2} + \frac{B_\sQ(\bq^*, \bq^0)}{R_\sQ^2} + \frac{B_\sS(\bs^*, \bs^0)}{R_\sS^2} + \frac{B_\sZ(\bz^*, \bz^0)}{R_\sZ^2}} \\
    &\quad\leq 12R_\sZ^2.
\end{align*}
}
As a result, we get
\begin{align*}
    \norm{\mWx\hat\bx^N}_2\leq \frac{17L_\zeta}{NR_\sZ} = \frac{17L_\zeta}{N} \frac{\lambda_{\min}^+(\mWx)}{M_x\sqrt{2}} = \frac{17\sqrt{2}L_\zeta}{2NM_x} \lambda_{\min}^+(\mWx).
\end{align*}
Analogously we have
\begin{align*}
    \norm{\mWy\hat\by^N}_2\leq \frac{17L_\zeta}{NR_\sS} = \frac{17L_\zeta}{N} \frac{\lambda_{\min}^+(\mWy)}{M_y\sqrt{2}} = \frac{17\sqrt{2}L_\zeta}{2NM_y} \lambda_{\min}^+(\mWy).
\end{align*}

Let us now estimate the duality gap in objective function. Recall that
\begin{align*}
    &F(\hat{\bf x}^N, \hat{\bf p}^N, {\bf y}, {\bf q}) + \langle{\bf z}, {{\bf W}_{\bf x}}\hat{\bf x}^N\rangle + \langle\hat{\bf s}^N, {{\bf W}_{\bf y}}{\bf y}\rangle - F({\bf x}, {\bf p}, \hat{\bf y}^N, \hat{\bf q}^N) - \langle\hat{\bf z}^N, {{\bf W}_{\bf x}}{\bf x}\rangle - \langle{\bf s}, {{\bf W}_{\bf y}}\hat{\bf y}^N\rangle \\
    &\leq\frac{L_\zeta}{N} \left(\frac{B_{\mathcal{X}}({\bf x}, {\bf x}^0)}{R_{\mathcal{X}}^2} + \frac{B_{\mathcal{P}}({\bf p}, {\bf p}^0)}{R_{\mathcal{P}}^2} + \frac{B_{\mathcal{Y}}({\bf y}, {\bf y}^0)}{R_{\mathcal{Y}}^2} + \frac{B_{\mathcal{Q}}({\bf q}, {\bf q}^0)}{R_{\mathcal{Q}}^2} + \frac{B_{\mathcal{Z}}({\bf z}, {\bf z}^0)}{R_{{\mathcal{Z}}}^2} + \frac{B_{\mathcal{S}}({\bf s}, {\bf s}^0)}{R_{{\mathcal{S}}}^2}\right).
\end{align*}
Also recall that $B_{\mathcal{T}}({\bf t}, {\bf t}^0)\leq R_{\mathcal{T}}^2$ for ${\bf t}\in\{{\bf x}, {\bf p}, {\bf y}, {\bf q}\}$ and ${\mathcal{T}}\in\{{\mathcal{X}}, {\mathcal{P}}, {\mathcal{Y}}, {\mathcal{Q}}\}$, respectively.
Setting ${\bf z} = {\bf z}^0 = 0,~ {\bf s} = {\bf s}^0 = 0$ we obtain
\begin{align*}
    F(\hat{\bf x}^N, \hat{\bf p}^N, {\bf y}, {\bf q}) + \langle\hat{\bf s}, {{\bf W}_{\bf y}}{\bf y}\rangle - F({\bf x}, {\bf p}, \hat{\bf y}^N, \hat{\bf q}^N) - \langle\hat{\bf z}, {{\bf W}_{\bf x}}{\bf x}\rangle \leq \frac{4L_\zeta}{N}.
\end{align*}
Note that
\begin{align*}
    \max_{{{\bf W}_{\bf y}}{\bf y} = 0}&\left[F(\hat{\bf x}^N, \hat{\bf p}^N, {\bf y}, {\bf q}) + \langle\hat{\bf s}^N, {{\bf W}_{\bf y}}{\bf y}\rangle\right] = \max_{{{\bf W}_{\bf y}}{\bf y} = 0} F(\hat{\bf x}^N, \hat{\bf p}^N, {\bf y}, {\bf q}), \\
    \min_{{{\bf W}_{\bf x}}{\bf x} = 0}&\left[F({\bf x}, {\bf p}, \hat{\bf y}^N, \hat{\bf q}^N) + \langle\hat{\bf z}^N, {{\bf W}_{\bf x}}{\bf x}\rangle\right] = \min_{{{\bf W}_{\bf x}}{\bf x} = 0} F({\bf x}, {\bf p}, \hat{\bf y}^N, \hat{\bf q}^N).
\end{align*}
Then
\begin{align}\label{eq:naive_dual_gap}
    \max_{{{\bf W}_{\bf y}}{\bf y}=0} F(\hat{\bf x}^N, \hat{\bf p}^N, {\bf y}, {\bf q}) - \min_{{{\bf W}_{\bf x}}{\bf x}=0} F({\bf x}, {\bf p}, \hat{\bf y}^N, \hat{\bf q}^N)\leq \frac{4L_\zeta}{N}.
\end{align}
In the criterion above, the $\max$ and $\min$ operations are taken over consensus sets ${{\bf W}_{\bf x}}{\bf x} = 0$ and ${{\bf W}_{\bf y}}\by = 0$, but the variables $\hat{\bf x}, \hat{\bf y}$ may not satisfy consensus constraints. The criterion can be updated as follows.

Introduce $\bar{\hat{\bf x}}^N = \frac{1}{m}{\bf 1}{\bf 1}^\top\hat{\bf x}^N,~ \bar{\hat{\bf y}}^N = \frac{1}{m}{\bf 1}{\bf 1}^\top\hat{\bf y}^N$. Let us recall that gradients of $F$ are uniformly bounded (see Assumption 3.1 of the paper). Using convexity of $F$ w.r.t ${\bf x}$ we obtain
\begin{align*}
    F(\bar{\hat{\bf x}}^N, \hat{\bf p}^N, {\bf y}, {\bf q})
    &\leq F(\hat{\bf x}^N, \hat{\bf p}^N, {\bf y}, {\bf q}) - \langle\nabla F_{\bf x}(\bar{\hat{\bf x}}^N, \hat{\bf p}^N, {\bf y}, {\bf q}), \hat{\bf x}^N - \bar{\hat{\bf x}}^N)\rangle \\
    &\leq F(\hat{\bf x}^N, \hat{\bf p}^N, {\bf y}, {\bf q}) + \|\nabla F_{\bf x}(\bar{\hat{\bf x}}^N, \hat{\bf p}^N, {\bf y}, {\bf q})\|\cdot \|\hat{\bf x}^N - \bar{\hat{\bf x}}^N\|_2 \\
    &\leq F(\hat{\bf x}^N, \hat{\bf p}^N, {\bf y}, {\bf q}) + \sqrt{m}M_x\cdot \frac{\|{{\bf W}_{\bf x}}(\hat{\bf x}^N - \bar{\hat{\bf x}}^N)\|_2}{\lambda_{\min}^+({{\bf W}_{\bf x}})} \\
    &\overset{\text{\ding{172}}}{\leq} F(\hat{\bf x}^N, \hat{\bf p}^N, {\bf y}, {\bf q}) + \frac{17L_\zeta\sqrt m}{N} \frac{M_x}{R_{\mathcal{Z}} \lambda_{\min}^+({{\bf W}_{\bf x}})} \\
    &\overset{\text{\ding{173}}}{=} F(\hat{\bf x}^N, \hat{\bf p}^N, {\bf y}, {\bf q}) + \frac{17L_\zeta}{\sqrt{2}N}.
\end{align*}
Here in $\text{\ding{172}}$ we used the bound on consensus violation from Theorem 3.6 of the paper and in $\text{\ding{173}}$ we recalled the definition of $R_{\mathcal{Z}}$ from Lemma 3.2. Carrying out an analogical estimate of $F({\bf x}, {\bf p}, \bar{\hat{\bf y}}^N, \hat{\bf q}^N)$, we obtain
\begin{align*}
    \max_{{{\bf W}_{\bf y}}{\bf y} = 0} F(\bar{\hat{\bf x}}^N, \hat{\bf p}^N, {\bf y}, {\bf q}) - \min_{{{\bf W}_{\bf x}}{\bf x} = 0} F({\bf x}, {\bf p}, \bar{\hat{\bf y}}^N, \hat{\bf q}^N)
    \leq \frac{(4 + 17\sqrt{2})L_\zeta}{N}.
\end{align*}

\subsection{Discussion for the Euclidean case}

Theorem \ref{Th:distr_MP} ensures a $O(1/\eps)$ convergence guarantee but still does not provide explicit dependencies on the network characteristics. We derive these dependencies in the simplified Euclidean case.
\begin{corollary}\label{cor:euclidean_case}
Let \add{$L \triangleq \max\{ L_{(\bx,\bp)(\bx,\bp)}, L_{(\bx,\bp)(\by,\bq)}, L_{(\by,\bq)(\bx,\bp)}, L_{(\by,\bq)(\by,\bq)} \}$, $M \triangleq \max\{M_x, M_y\}$ and $\chi = \max\{\chi(W_x), \chi(W_y) \}$}. Let us also upper bound the sizes of the sets: for $\sT_i\in\braces{\bar\sX, \sP_i, \bar\sY, \sQ_i}$, let $R_{\sT_i}^2\leq R^2$.
For $\bt\in\braces{\bx, \bp, \by, \bq}$ let $\bt = (t_1^\top, \ldots, t_m^\top)^\top$ and introduce the average values $\hat t^N_{av} = \frac{1}{m}\sum_{i=1}^m \hat t^N_i$. Then after $N$ iterations of Algorithm \ref{alg:mirror_prox_distr} it holds
\begin{align*}
    &\add{\max_{\substack{y\in\bar\sY,\\ \bq\in\sQ}}\frac{1}{m} \sum_{i=1}^m f_i(\hat x_{av}^N, \hat p_i^N, y, q_i) - \min_{\substack{x\in\bar\sX,\\ \bp\in\sP}} \frac{1}{m} \sum_{i=1}^m f_i(x, p_i, \hat y_{av}^N, \hat q_i^N)} = O\cbraces{\max\braces{\frac{LR^2}{N}, \frac{MR\chi}{N}}}, \\
    &\cbraces{\frac{1}{m}\sum_{i=1}^m \norm{\hat x_i^N - \hat x_{av}^N}_2^2}^{1/2} = O\cbraces{\max\braces{\frac{LR^2}{MN}, \frac{R\chi}{N}}}, \\
    &\cbraces{\frac{1}{m}\sum_{i=1}^m \norm{\hat y_i^N - \hat y_{av}^N}_2^2}^{1/2} = O\cbraces{\max\braces{\frac{LR^2}{MN}, \frac{R\chi}{N}}}.
\end{align*}
\end{corollary}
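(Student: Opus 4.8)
The plan is to obtain both estimates by specializing Theorem~\ref{Th:distr_MP} and Lemma~\ref{lemma:smothness_constants} to the Euclidean setup and then bookkeeping the dependence of the single composite constant $L_\zeta$ on $L$, $M$, $R$, $\chi$ and the node count $m$. Theorem~\ref{Th:distr_MP} already supplies the three raw bounds: the $F$-duality gap is at most $(4+17\sqrt2)L_\zeta/N$, while $\|\mWx\hat\bx^N\|_2\le 17L_\zeta/(NR_\sZ)$ and $\|\mWy\hat\by^N\|_2\le 17L_\zeta/(NR_\sS)$. Everything therefore reduces to (i) an upper bound on $L_\zeta$ and (ii) translating these quantities into the $f$-duality gap and the per-node consensus residuals of the statement.

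First I would estimate $L_\zeta=2\max\{L_{\bu\bu},L_{\bu\bv},L_{\bv\bu},L_{\bv\bv}\}$ via Lemma~\ref{lemma:smothness_constants}. In the Euclidean case every node norm is the $2$-norm, so all mixed operator norms $\|\mWx\|_{2\to(\bx,*)}$, $\|\mWx\|_{\bx\to2}$, $\|\mWy\|_{\by\to2}$, $\|\mWy\|_{2\to(\by,*)}$ reduce to the spectral norms $\lambda_{\max}(\mWx)=\lambda_{\max}(W_x)$ and $\lambda_{\max}(\mWy)=\lambda_{\max}(W_y)$. Using that the aggregated prox-radii satisfy $R_\sX^2,R_\sP^2,R_\sY^2,R_\sQ^2\le mR^2$, the diagonal constants obey $L_{\bu\bu},L_{\bv\bv}=O(mLR^2)$ and the smooth parts of $L_{\bu\bv},L_{\bv\bu}$ are likewise $O(mLR^2)$. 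The decisive step is the cross (consensus) term: substituting $R_\sZ=\sqrt{2m}\,M_x/\lambda_{\min}^+(\mWx)$ gives $R_\sZ\|\mWx\|_2=\sqrt{2m}\,M_x\,\lambda_{\max}(W_x)/\lambda_{\min}^+(W_x)=\sqrt{2m}\,M_x\,\chi(W_x)$, whence $R_\sX R_\sZ\|\mWx\|_2=O(mRM\chi)$, and symmetrically $R_\sY R_\sS\|\mWy\|_2=O(mRM\chi)$. Combining the pieces yields $L_\zeta=O\!\big(m\max\{LR^2,MR\chi\}\big)$.

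Then I would push $L_\zeta$ through the three bounds. For the objective, recall $\ol{\hat\bx}^N=\one\otimes\hat x_{av}^N$, so on the consensus set $F(\ol{\hat\bx}^N,\hat\bp^N,\one\otimes y,\bq)=\sum_i f_i(\hat x_{av}^N,\hat p_i^N,y,q_i)=m\cdot\tfrac1m\sum_i f_i(\cdots)$; the $f$-gap of the statement is therefore exactly $1/m$ times the $F$-gap, and dividing $(4+17\sqrt2)L_\zeta/N$ by $m$ gives $O(\max\{LR^2/N,MR\chi/N\})$. For consensus, since $\mWx\ol{\hat\bx}^N=0$ and $\hat\bx^N-\ol{\hat\bx}^N$ is orthogonal to $\kernel\mWx$, we have $\|\hat\bx^N-\ol{\hat\bx}^N\|_2\le\|\mWx\hat\bx^N\|_2/\lambda_{\min}^+(\mWx)\le 17L_\zeta/(NR_\sZ\lambda_{\min}^+(\mWx))$; inserting $R_\sZ\lambda_{\min}^+(\mWx)=\sqrt{2m}\,M_x$ and normalizing by $1/\sqrt m$ (because $(\tfrac1m\sum_i\|\hat x_i^N-\hat x_{av}^N\|_2^2)^{1/2}=\tfrac1{\sqrt m}\|\hat\bx^N-\ol{\hat\bx}^N\|_2$) produces $O(L_\zeta/(mMN))=O(\max\{LR^2/(MN),R\chi/N\})$; the $y$-residual is identical with $M_y,\mWy,R_\sS$ in place of $M_x,\mWx,R_\sZ$.

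I expect the only genuine obstacle to be the $L_\zeta$ bookkeeping, specifically verifying that all powers of $m$ cancel so that the final rates carry no explicit node-count dependence. This hinges on two scalings: the aggregated prox-radii grow as $R_\sX^2=O(mR^2)$ while $R_\sZ=\Theta(\sqrt m\,M_x/\lambda_{\min}^+)$, making the cross term $\Theta(mRM\chi)$, after which the subsequent division by $m$ (for the gap) or by $mM$ together with the $1/\sqrt m$ averaging (for consensus, via $R_\sZ\lambda_{\min}^+=\sqrt{2m}M_x$) strips the $m$ off cleanly. The one place where a constant-tracking slip could occur is the Euclidean reduction of the four mixed operator norms to $\lambda_{\max}(W_x),\lambda_{\max}(W_y)$, which must be done carefully before forming $\chi$.
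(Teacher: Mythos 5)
Your proposal is correct and follows essentially the same route as the paper: specialize Theorem \ref{Th:distr_MP}, bound $L_\zeta$ via Lemma \ref{lemma:smothness_constants} with $R_\sX^2,R_\sP^2,R_\sY^2,R_\sQ^2\le mR^2$, the Euclidean reduction of the mixed operator norms to $\lambda_{\max}(W_x),\lambda_{\max}(W_y)$, and $R_\sZ\lambda_{\min}^+(\mWx)=\sqrt{2m}\,M_x$ (resp.\ $R_\sS\lambda_{\min}^+(\mWy)=\sqrt{2m}\,M_y$), then divide the $F$-gap by $m$ and convert $\norm{\mWx\hat\bx^N}_2$ to the per-node residual via the projection inequality $\norm{\hat\bx^N-\ol{\hat\bx}^N}_2\le\norm{\mWx\hat\bx^N}_2/\lambda_{\min}^+(\mWx)$, exactly as in the paper's derivation. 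The only caveat, shared by the corollary's statement itself, is that replacing $M_x$ (or $M_y$) by $M=\max\{M_x,M_y\}$ in the consensus bounds implicitly assumes $M_x\asymp M_y$, so your bookkeeping matches the paper's intended reading.
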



Corollary \ref{cor:euclidean_case} states the convergence rate in terms of number of iterations of Algorithm \ref{alg:mirror_prox_distr}. Every iteration corresponds to one communication round and one oracle call per node, resulting in equivalent oracle and communication complexities.
\begin{remark}\label{rem:reaches_lower_bounds}
    Let assumptions of Corollary \ref{cor:euclidean_case} be satisfied. Additionally, let the critical point of each $f_i(x_i, p_i, y_i, q_i)$ lie in the set $\bar\sX\times\sP_i\times\bar\sY\times\sQ_i$ ($i = 1, \ldots, m$). Then we have $M\leq LR$. The Chebyshev acceleration technique (see i.e. \cite{li2021accelerated}) makes it possible to reduce $\chi$ to $O(1)$ at the cost of performing $\sqrt{\chi}$ communication rounds at each iteration. As a result, Algorithm \ref{alg:mirror_prox_distr} requires $O(LR^2/\eps)$ oracle calls per node and $O(LR^2\sqrt{\chi}/\eps)$ communication rounds yield a point $(\hat\bx, \hat\bp, \hat\by, \hat\bq)$ such that
    \begin{align*}
        \add{\max_{ \substack{y\in\bar\sY,\\ \bq\in\sQ}}\frac{1}{m} \sum_{i=1}^m f_i(\hat x_{av}^N, \hat p_i^N, y, q_i) - \min_{\substack{x\in\bar\sX,\\ \bp\in\sP}} \frac{1}{m} \sum_{i=1}^m f_i(x, p_i, \hat y_{av}^N, \hat q_i^N)} \leq \eps.
    \end{align*}
    As will be shown in Section \ref{sec:lower_bounds}, these oracle and communication complexities are optimal.
\end{remark}

\section{Lower bounds for distributed saddle point problems}\label{sec:lower_bounds}


In this section we present results on lower bounds. In obtaining them, we focus on the Euclidean case. Before proceeding directly to the lower bounds, we need to identify the class of algorithms for which \revoms{they} are valid.
To describe this class of first-order methods, we use a similar definition of Black-Box procedure as in \cite{scaman2017optimal}. We assume that one local iteration costs $t$ time units, and the communication round costs $\tau$ time units. 
Additionally, information can be transmitted only along the undirected edge of the network. Communications and local updates can take place in parallel and asynchronously. More formally, it can be described as follows.

\begin{assumption} \label{ass_bbp}

Each machine $i$ can collect past values for $x,p$ and $y,q$ in internal memories $H^x_{i,T} \subseteq \mathcal{R}^{d_x}, H^p_{i,T} \subseteq \mathcal{R}^{d_{p_i}}, H^y_{i,T} \subseteq \mathcal{R}^{d_y}, H^q_{i,T} \subseteq \mathcal{R}^{d_{q_i}}$ at time $T \geq 0$.

\noindent\textbf{Initialization.} We assume that all nodes start with $x^0 = 0$, $p^0_i = 0$, $y^0 = 0$, $q^0_i = 0$, then $ H^x_{i,0} = \{0\}$, $H^p_{i,0} = \{0\}$, $ H^y_{i,0} = \{0\}$, $H^q_{i,0} = \{0\}$ for all $i$.

\noindent\textbf{Connection.} The internal memories are updated either by communications $\bar H^x_{i,T}, \bar H^y_{i,T}$ or local steps $\hat H^x_{i,T}, \hat H^p_{i,T}, \hat H^y_{i,T}, \hat H^q_{i,T}$:
\begin{equation*}
    H^x_{i,T}  \subseteq  \bar H^x_{i,T} \cup \hat H^x_{i,T},~~~ H^p_{i,T}  \subseteq  \hat H^p_{i,T},~~~
   H^y_{m,T}  \subseteq  \bar H^y_{m,T} \cup \hat H^y_{m,T}, ~~~ H^q_{i,T}  \subseteq \hat H^q_{i,T}.
\end{equation*}

\noindent\textbf{Communication.} Each machine $i$ can start to communicate with neighbors at time $T-\tau$, then, after $\tau$ units of time, the update of memories using communications can be rewritten as follows:
\begin{eqnarray*}
   \bar H^x_{i,T}  \subseteq  \text{span}\left\{\bigcup_{(j,i) \in \mathcal{E}} H^x_{j,T-\tau}\right\}, ~~~
   \bar H^y_{i,T}  \subseteq  \text{span}\left\{\bigcup_{(j,i) \in \mathcal{E}} H^y_{j,T-\tau}\right\}.
\end{eqnarray*}


\noindent\textbf{Local computation.} Locally, each device $i$ can find $\nabla_x f_i(x,p,y,q)$, $\nabla_{p_i} f_i(x,p,y,q)$, $\nabla_y f_i(x,p,y,q)$, $\nabla_{q_i} f_i(x,p,y,q)$ for all $x \in H^x_{i,T-t}, p \in H^p_{i,T-t}, y \in H^y_{i,T-t}, q \in H^q_{i,T-t}$. Then, the local update is
\begin{align*}
   \hat H^x_{i,T} &\subseteq  \text{span}\left\{ x, \nabla_x f_i(x',p',y',q'), {\rm Mirr}(\gamma \nabla_x f_i(x',p',y',q');x;\sX)\right\}, \\
   \hat H^p_{i,T} &\subseteq  \text{span}\left\{ p, \nabla_{p_i} f_i(x',p',y',q'), {\rm Mirr}(\gamma \nabla_{p_i} f_i(x',p',y',q');p;\sP_i)\right\}, \\
   \hat H^y_{i,T} &\subseteq  \text{span}\left\{ y, \nabla_y f_i(x',p',y',q'), {\rm Mirr}(\gamma \nabla_{y} f_i(x',p',y',q');y;\sY)\right\}, \\
   \hat H^q_{i,T} &\subseteq  \text{span}\left\{ q, \nabla_q f_i(x',p',y',q'), {\rm Mirr}(\gamma \nabla_{q_i} f_i(x',p',y',q');q;\sQ_i)\right\},
\end{align*}
for all $x,x' \in H^x_{i,T-t}, p,p' \in H^p_{i,T-t}, y,y' \in H^y_{i,T-t}, q,q' \in H^q_{i,T-t}$ and all $\gamma \in \R$. ${\rm Mirr}$ is defined above in \eqref{eq:mirror}.

\noindent\textbf{Output.} When the algorithm ends (after $T_0$ units of time), we have $m$ local outputs $x^{f}_i \in H^x_{i,T_0} $, $p^{f}_i \in H^p_{i,T_0} $, $y^{f}_i \in H^y_{i,T_0} $, $q^{f}_i \in H^q_{i,T_0} $. 
Suppose the final global output is calculated as follows: 
\begin{equation*}
   x^{f} \in \text{span}\left\{\bigcup_{i=1}^m H^x_{i,T_0} \right\},~~~ p^f_i \in H^p_{i,T_0},~~~y^{f} \in \text{span}\left\{\bigcup_{i=1}^m H^y_{i,T_0} \right\},~~~q^f_i \in H^q_{i,T_0}.
\end{equation*}
\end{assumption}
Note that the above procedure do not communicate on variables $p_i$ and $q_i$, because they are local and unique for each node, in contrast with variables $x$ and $y$. It is easy to check that Algorithm \ref{alg:mirror_prox_distr} also satisfies the definition of the procedure above.

The idea of proving any lower bounds for optimization problems comes from the first results on lower bounds for convex minimization \cite{nesterov2004introduction}. The essence is to give an example of a "bad" function.
In the case of distributed problems, it is also necessary to "badly"  divide this function between $m$ devices \cite{scaman2017optimal}.
We consider a special case of problem \eqref{eq:problem_initial} (the variables $\bp$ and $\bq$ are dummies, and the sets $\sP$ and $\sQ$ are empty):
\begin{align}
    \label{eq:fun_lower_bounds}
    \min_{x\in\bar\sX} \max_{y\in\bar\sY} f(x,y) = \frac{1}{m}\sum_{i=1}^m f_i(x, y),
\end{align}
where $\bar\sX, \bar\sY \subset \R^d$ are balls with the centers at point $0$ and some size $R$ (see \eqref{eq:R_def_for_prox}). We will define the size $R$ and $n$ more precisely in the proof (see Appendix \ref{app:lower_bounds}). Since we concentrate entirely on the Euclidean case, therefore the operator ${\rm Mirr}$ in the procedure definition above can be replaced by the Euclidean projection operator: ${\rm Mirr}(g,x,X) = \text{proj}_{X} (x - g)$ with $\text{proj}_{X} (y) = \arg\min_{z \in X} \|z-y \|_2$.

The global function $f(x,y)$ in \eqref{eq:fun_lower_bounds} is exactly an example of a bilinear function from the work on non-distributed lower bounds for the strongly convex-strongly concave case \cite{zhang2019lower}, but with strong convexity and strong concavity constants $\mu \sim \tfrac{\varepsilon}{R^2}$ (see \eqref{t144} in Appendix \ref{app:lower_bounds}). In fact, if we write the dual function for this $f(x,y)$ from \eqref{t144}, we have exactly the problem from \cite{nesterov2004introduction}, but with a smoothness constant $\sim\tfrac{L^2}{\mu}$ (see Lemma \ref{lem2} in Appendix \ref{app:lower_bounds}). The problem \eqref{eq:fun_lower_bounds} is decomposed into summands $f_i$ in such a way that if we want to get closer to the solution, we need to communicate (see also \cite{scaman2017optimal}). This idea is explained in more detail in Appendix \ref{app:lower_bounds}. Here we only give a simplified version of the main theorem.

\begin{theorem}  \label{Th:lower_bound}
For any given $L >0$, and $\chi \geq 1$, there exist \\ 
$\bullet$ functions $f_i$ defined on $\bar\sX\times\sP_i\times\bar\sY\times\sQ_i$ ($i = 1, \ldots, m$), such that they satisfy Assumption \ref{assum:convex_compact}, the averaged function $f= \tfrac{1}{m} \sum_{i=1}^m f_i$  is $L$-smooth and all the sets $\bar\sX\times\sP_i\times\bar\sY\times\sQ_i$ have size $R$, \\
$\bullet$ a connected graph $\mathcal{G}$ and a gossip matrix $W$ associated with it. \revoms{The condition number of the gossip matrix equals $\chi(W) = \chi$}.\\
Then for any decentralized algorithm satisfying Assumption~\ref{ass_bbp} the minimal time $T$ for yielding a $\eps$-solution $(\hat x, \hat\bp, \hat y, \hat\bq)$ (i.e. $\ds\max_{y \in \bar\sY, \bq\in\sQ}f(\hat x, \hat\bp, y, \bq) - \min_{x \in \bar\sX, \bp\in\sP} f(x, \bp, \hat y, \hat\bq) \leq \varepsilon$) is given by
\[
T = \Omega\left( \frac{L R^2}{\varepsilon} (t + \sqrt{\chi}\tau)\right).
\]

\end{theorem}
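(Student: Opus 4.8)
The plan is to follow the two-part ``hard instance plus adversarial splitting'' paradigm of \cite{scaman2017optimal,nesterov2004introduction}, adapting it to the bilinear saddle structure and, as indicated in the excerpt, reducing via duality to Nesterov's worst-case quadratic. First I would take the special case \eqref{eq:fun_lower_bounds} (with $\bp,\bq$ dummy and $\sP,\sQ$ empty) and choose $f(x,y)$ to be the bilinear saddle $\langle A x, y\rangle$ of \cite{zhang2019lower} plus a small quadratic regularization with parameter $\mu\sim\varepsilon/R^2$, where $A$ is the bidiagonal ``chain'' matrix. The structural fact I would establish is the \emph{zero-chain} (support-growth) property: if the current Euclidean iterates $x,y$ are supported on the first $k$ coordinates, then one evaluation of a local gradient $\nabla f_i$ followed by a projection (the Euclidean ${\rm Mirr}$) can extend the support to at most coordinate $k+1$. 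Since the dual of this $f$ is exactly Nesterov's quadratic with smoothness $\sim L^2/\mu$, the classical first-order lower bound forces $\Omega(LR^2/\varepsilon)$ oracle calls to drive the duality gap below $\varepsilon$, which accounts for the $t$-term.

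For the communication term I would distribute the rows of $A$ adversarially: odd coordinates are assigned to one node and even coordinates to a second node placed at graph distance $\Delta$ from the first, with all remaining nodes acting as relays holding no part of the objective. Advancing the reachable support by one coordinate then alternates between these two nodes, and the freshly activated coordinate can only be transferred between them by propagating it across the $\Delta$ intermediate hops. Under Assumption~\ref{ass_bbp} each hop costs a separate communication round, since the communicated memories $\bar H^x_{i,T},\bar H^y_{i,T}$ only acquire the span of the immediate neighbours' memories from time $T-\tau$; hence every unit of support growth costs $\Omega(\Delta)$ communications in addition to one local oracle call.

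To tie $\Delta$ to the condition number I would realize $\mathcal G$ as a (possibly weighted) path, on which prescribing $\chi(W)=\chi$ forces the diameter to satisfy $\Delta=\Omega(\sqrt{\chi})$: for the unweighted path on $m$ nodes one has $\lambda_{\max}(W)/\lambda_{\min}^+(W)\sim m^2$, i.e.\ $\Delta\sim m\sim\sqrt{\chi}$, and reweighting the edges realizes an arbitrary target $\chi$. Combining the two ingredients, reaching an $\varepsilon$-solution requires $N=\Omega(LR^2/\varepsilon)$ support-growth steps, each of which costs at least $t$ units of local computation and at least $\Omega(\sqrt{\chi})\,\tau$ units of communication that cannot be avoided even if the two are pipelined, yielding the claimed bound $T=\Omega\!\left(\frac{LR^2}{\varepsilon}(t+\sqrt{\chi}\,\tau)\right)$.

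The step I expect to be the main obstacle is the rigorous bookkeeping of the support-growth invariant in the \emph{asynchronous, parallel} model of Assumption~\ref{ass_bbp}. I must define, for every node $i$ and time $T$, the largest coordinate index present in $H^x_{i,T}$ and in $H^y_{i,T}$, and show that the \textbf{Connection}, \textbf{Communication} and \textbf{Local computation} rules can advance this ``global reachable support'' by no more than one coordinate per $\Delta$ communication rounds plus one oracle call, even when communications and local steps overlap in time. The bilinear coupling makes this more delicate than the pure-minimization argument of \cite{scaman2017optimal}: the oracle returns $\nabla_x f_i$ and $\nabla_y f_i$ simultaneously, so the supports of the $x$- and $y$-iterates are interdependent and must be tracked jointly, and one must verify that neither variable can ``leapfrog'' the other through the chain structure of $A$.
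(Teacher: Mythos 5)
Your proposal follows essentially the same route as the paper's proof: the same hard instance (the Zhang--Hong--Zhang bilinear chain matrix split into odd/even parts $A_1,A_2$ placed on two node sets at distance $\rho\sim\sqrt{\chi}$ with all other nodes as pure relays, plus quadratic regularization with $\mu\sim\varepsilon/R^2$), the same zero-chain support-growth bookkeeping under Assumption~\ref{ass_bbp}, the same reduction via the dual to Nesterov's worst-case quadratic, and the same weighted-path construction from \cite{scaman2017optimal} to realize $\chi(W)=\chi$. The only place you are looser than the paper is in invoking the classical oracle lower bound as a black box: the paper instead works quantitatively with the geometric approximate solution $\bar y^*_i = q^i/(1-q)$ (Lemmas~\ref{lem2} and~\ref{lem56}) and converts distance-to-solution into duality gap via the $\tfrac{32\varepsilon}{R^2}$-strong convexity--concavity, obtaining $T=\Omega\bigl((t+\rho\tau)/\ln q^{-1}\bigr)$ with $1/\ln q^{-1}=\Omega(LR^2/\varepsilon)$ --- which is exactly the explicit version of the step you flagged as the main obstacle.
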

Theorem \ref{Th:lower_bound} shows that the minimal number of oracle calls per node is $\Omega(LR^2/\eps)$ and the minimal number of comunication rounds is $\Omega(LR^2\sqrt{\chi(W)}/\eps)$. In Remark \ref{rem:reaches_lower_bounds} we showed that in the euclidean case Algorithm \ref{alg:mirror_prox_distr} with Chebyshev acceleration reaches these lower bounds when the critical point of each $f_i(x_i, p_i, y_i, q_i)$ lies in $\bar\sX\times\sP_i\times\bar\sY\times\sQ_i$ ($i = 1, \ldots, m$).

\section{Application to the Wasserstein barycenters}\label{sec:wb}

Now we show the benefits of representing  some convex problems as  convex-concave problems on the example of the Wasserstein barycenter (WB) problem and  solve it by the DMP algorithm. Similarly to Section \eqref{sec:distributed_mirror_prox}, we consider a SPP in proximal setup and introduce Lagrangian multipliers for the common variables. However, in the Section \ref{sec:distributed_mirror_prox} we obtained results in a general setup without additional knowledge about cost functions and sets. On the contrary, in this section we utilize the special structure of the WB problem and introduce slightly different norms. After that, we get a convergence guarantee by applying Theorem~\ref{Th:distr_MP}.

 The  (fixed-support) WB of  probability measures $y_1, y_2\,..., y_m $ from the probability simplex $\Delta_n$ is a solution of the following  finite-dimensional optimization problem
\begin{equation}\label{eq:W_bary}
   \min_{x \in \Delta_n} \frac{1}{m} \sum_{i=1}^m \mathcal  W(x, y_i),
\end{equation}
where $\mathcal W(x,y) = \min_{\pi \in  U(x,y)} \langle C, \pi \rangle$
is  optimal transport  between two histograms $x, y \in \Delta_n$, $C \in \R^{n\times n}_+$ is a given  ground cost matrix, and
$\pi$ is a transport plan which belongs to the transportation polytope $ U =\{\pi \in \mathbb R_{+}^{n\times n}, \pi\boldsymbol{1} = x, \pi^\top\boldsymbol{1} = y\}$.  
As  we consider a general cost matrix $C$, the optimal transport problem  is more general than the problem defining a Wasserstein distance. 

\subsection{Saddle point problem formulation}
Next we reformulate the WB problem \eqref{eq:W_bary} as a saddle point problem  \cite{dvinskikh2020improved}. To do so, we introduce
  stacked column vector $b_i = [
x^\top, y_i^\top]^\top$,
vectorized cost matrix $d$ of $C$, vectorized transport plan $p \in \Delta_{n^2}$ of $\pi$,  the incidence matrix $A=\{0,1\}^{2n\times n^2}$, vectors $q_i\in [-1,1]^{2n}$ for all $i=1,...,m$. Then the problem \revoms{\eqref{eq:W_bary}} can be equivalently rewritten as 
\begin{align}\label{eq:def_saddle_prob}
    \min_{ \substack{x\in \Delta_n, \\
    p_1,\dots, p_m\in \Delta_{n^2}
    } } \max_{q_1,\dots, q_m\in [-1,1]^{2n}}  \frac{1}{m}\sum_{i=1}^m  f_i(x,p_i,q_i) \triangleq d^\top p_i +2\|d\|_\infty\left(q_i^\top Ap_i -b_i^\top q_i\right).
\end{align}
Clearly, this problem fits the template \eqref{eq:problem_initial}. Then we can  rewrite  it for stacked column vectors up to a mulplicative term $1/m$ to fit the template \eqref{eq:problem_reform}
\begin{align}\label{eq:alm_distrNew}
         \min_{ \substack{ \p \in \Pp, \x \in \X}}   ~ \max_{ \q \in \sQ}
          F(\p,\x,\q) \triangleq 
       \boldsymbol d^\top \p +2\|d\|_\infty\left(\q^\top\boldsymbol A \p -\b^\top \q\right)\text{ s.t.: } x_1=x_2=\ldots=x_m.
        \end{align}
        where  $\x = [x_1^\top\in \Delta_n,\dots,x_m^\top \in \Delta_n]^\top  \in \X \triangleq \prod^m \Delta_{n}$ (here $\prod^m \Delta_{n}$ is the Cartesian product of $m$ simplices), $\p = [p_1^\top \in \Delta_{n^2},\dots,p_m^\top \in \Delta_{n^2}]^\top \in \Pp  \triangleq \prod^m \Delta_{n^2} $,
and  $\q = [q_1^\top,\ldots,q_m^\top]^\top \in \sQ \triangleq [-1,1]^{2nm}$, $\b = [x_1^\top, y_1^\top, ..., x_m^\top, y_m^\top]^\top$, 
$\boldsymbol d = [d^\top, \ldots, d^\top ]^\top $ and
   $\boldsymbol A = {\rm diag}\{A, ..., A\} \in \{0,1\}^{2mn\times mn^2}$ is  the block-diagonal matrix.

Finally, to enable distributed computation of  problem \eqref{eq:alm_distrNew}, we replace the constraint $ x_1 = \dots = x_m$
by $\W \x = 0$ (matrix $\W \triangleq  W \otimes I_n$, where $W$ is defined in  \eqref{eq:matrx_def}).  Then we  introduce   the Lagrangian dual variable $\z = [z_1^\top, \ldots, z^\top_m]^\top \in \Z \triangleq \R^{nm}$ corresponding to the constraint $\W \x=0$ and rewrite the problem \eqref{eq:alm_distrNew}  as follows
\begin{align}\label{eq:def_saddle_prob2}
  \min_{\bu \in \sU} \max_{\bv \in A_\sV} 
    &~S(\u,\v)\triangleq   \boldsymbol d^\top \p +2\|d\|_\infty\left(\q^\top\boldsymbol A \p -\b^\top \q \right) +\la \z,\W\x \ra,
\end{align}
\add{where  $\u = (\x,\p), \v = (\q,\z)$ and $\U \triangleq \sX \times \sP $, $\sA_\sV \triangleq \sQ \times \R^{nm}$. 
 Problem \eqref{eq:def_saddle_prob2}  fits the form \eqref{eq:saddle_point}, hence  it can be solved by the DMP.
 }

\subsection{Convergence guarantee }

Before presenting the complexity bound for the DMP algorithm to solve the SPP  \eqref{eq:def_saddle_prob2}, we firstly 
 bound the norm of dual variable $\bz$ similarly to the general setup in Section \ref{sec:distributed_mirror_prox}. 

\begin{lemma}\label{lemma:constrained_lagrange_multipiers_wb}
    Let $R_\sZ^2 = \add{8}mn\max_{i,j}C_{ij}^2/(\lambda_{\min}^+(W))^2$. Then there exists a saddle point $(\bx^*, \bp^*, \bq^*, \bz^*)$ of problem \eqref{eq:def_saddle_prob2} such that $\|\bz^*\|_2\leq R_\sZ$.
\end{lemma}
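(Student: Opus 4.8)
The plan is to recognize problem \eqref{eq:def_saddle_prob2} as a special instance of the general reformulation \eqref{eq:problem_reform} in which the only variable carrying a consensus constraint is the global variable $\x$: the measures $y_1,\dots,y_m$ are fixed data, so there is no global maximization variable $\by$ and, correspondingly, no dual variable $\bs$. With this identification the localization of $\bz$ proved in Theorem~\ref{th:constrained_saddle} transfers verbatim, and exactly as in Lemma~\ref{lemma:constrained_lagrange_multipliers} it guarantees a saddle point $(\bx^*,\bp^*,\bq^*,\bz^*)$ with $\normtwo{\bz^*}^2\le \bar R_\sZ^2 = \bar\alpha\, m\, M_x^2/(\lambda_{\min}^+(\W))^2$, where $M_x$ is any uniform bound on $\normtwo{\nabla_x f_i}$ over the feasible set. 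Thus the whole task reduces to evaluating $M_x$ for the WB objective and fixing the free constant $\bar\alpha$.

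To compute $M_x$, I would write $q_i=[q_i^{(1)\top},q_i^{(2)\top}]^\top$ with $q_i^{(1)},q_i^{(2)}\in\R^n$ and $b_i=[x^\top,y_i^\top]^\top$, and observe that the only $x$-dependence of $f_i(x,p_i,q_i)=d^\top p_i+2\norm{d}_\infty(q_i^\top Ap_i-b_i^\top q_i)$ comes from the term $-2\norm{d}_\infty\,b_i^\top q_i=-2\norm{d}_\infty(x^\top q_i^{(1)}+y_i^\top q_i^{(2)})$, so that $\nabla_x f_i=-2\norm{d}_\infty\,q_i^{(1)}$. Since $q_i\in[-1,1]^{2n}$ gives $\normtwo{q_i^{(1)}}\le\sqrt{n}$ and $\norm{d}_\infty=\max_{i,j}C_{ij}$ (as $d$ is the vectorization of $C\in\R_+^{n\times n}$), this yields $M_x=2\sqrt{n}\max_{i,j}C_{ij}$, hence $M_x^2=4n\max_{i,j}C_{ij}^2$. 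Using $\W=W\otimes I_n$, whence $\lambda_{\min}^+(\W)=\lambda_{\min}^+(W)$, and choosing $\bar\alpha=2$ as in Lemma~\ref{lemma:constrained_lagrange_multipliers}, I then obtain $R_\sZ^2=8mn\max_{i,j}C_{ij}^2/(\lambda_{\min}^+(W))^2$, the claimed value.

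I expect the only delicate point to be the careful transfer of Theorem~\ref{th:constrained_saddle} to the single-constraint WB setting, rather than any hard estimate. One must check that the WB problem genuinely fits Assumptions~\ref{assum:convex_compact}--\ref{assum:bounded_gradients}, which is immediate since each $f_i$ is affine (hence convex) in $(x,p_i)$ and affine (hence concave) in $q_i$, and the feasible simplices together with the box $[-1,1]^{2n}$ are convex and compact. One must also confirm that dropping the $\by$/$\bs$ block leaves the $\bz$-bounding step of the theorem's proof intact, where the relevant dual norm is controlled through $\normtwo{\nabla_\x F}^2=\sum_{i=1}^m\normtwo{\nabla_x f_i}^2\le m M_x^2$. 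The remaining arithmetic — propagating the factor $4n$ and the constant $\bar\alpha=2$ through the bound — is routine.
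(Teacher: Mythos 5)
Your proof is correct and takes essentially the same route as the paper's: both specialize Lemma~\ref{lemma:constrained_lagrange_multipliers} (i.e., Theorem~\ref{th:constrained_saddle} with $\bar\alpha = 2$) to the WB saddle-point problem, compute $\nabla_x f_i(x,p_i,q_i) = -2\|d\|_\infty [q_i]_{1\dots n}$, and bound $\|[q_i]_{1\dots n}\|_2 \le \sqrt{n}$ with $\|d\|_\infty = \max_{i,j}C_{ij}$ to get $M_x = 2\sqrt{n}\max_{i,j}C_{ij}$ and hence $R_\sZ^2 = 2mM_x^2/(\lambda_{\min}^+(W))^2 = 8mn\max_{i,j}C_{ij}^2/(\lambda_{\min}^+(W))^2$. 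Your additional checks (that the WB problem satisfies Assumptions~\ref{assum:convex_compact}--\ref{assum:bounded_gradients}, that dropping the $\by$/$\bs$ block is harmless, and that $\lambda_{\min}^+(W\otimes I_n) = \lambda_{\min}^+(W)$) are left implicit in the paper but change nothing substantive.
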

This lemma   is  essentially a particular case of Lemma \ref{lemma:constrained_lagrange_multipliers}  for the WB problem. 
\begin{lemma}\label{lm:Lipsch1}
Objective $S(\u,\v)$ in \eqref{eq:def_saddle_prob2} is $(L_{\u\u},L_{\u\v},L_{\v\u},L_{\v\v})$-smooth with $L_{\u\u} = L_{\v\v} = 0$ and
\add{$L_{\u\v} = L_{\v\u} =  8m\sqrt{2n\ln n}\max_{i,j}C_{ij} \chi$} w.r.t. norms 
\add{\[
\|\u\|^2_{\U} = \frac{1}{m \ln n} \sum_{i=1}^m\left(  \|x_i\|^2_1 +\frac{1}{2} \|p_i\|^2_1\right) ~ \text{ and } ~ \|\v\|^2_{\V} = \frac{1}{mn}\left( \|\q\|^2_2 + \frac{(\lambda_{\min}^+(W))^2\|\z\|^2_{2}}{8 \max_{i,j}C_{ij}^2 } \right).
\]}
\end{lemma}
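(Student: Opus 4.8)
The plan is to verify the four inequalities of the smoothness Definition directly, exploiting the bilinear structure of $S$. First I would record the four partial gradients,
\[
\nabla_\x S = \W\z, \quad \nabla_\p S = \boldsymbol d + 2\|d\|_\infty\boldsymbol A^\top\q, \quad \nabla_\q S = 2\|d\|_\infty(\boldsymbol A\p - \b), \quad \nabla_\z S = \W\x.
\]
Since $\nabla_\u S = (\W\z,\ \boldsymbol d + 2\|d\|_\infty\boldsymbol A^\top\q)$ is independent of $\u=(\x,\p)$ and $\nabla_\v S = (2\|d\|_\infty(\boldsymbol A\p-\b),\ \W\x)$ is independent of $\v=(\q,\z)$, the diagonal constants vanish, $L_{\u\u}=L_{\v\v}=0$. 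Only the off-diagonal constants remain, and only the increments $\nabla_\u S(\u,\v)-\nabla_\u S(\u,\v') = (\W\Delta\z,\ 2\|d\|_\infty\boldsymbol A^\top\Delta\q)$ and $\nabla_\v S(\u,\v)-\nabla_\v S(\u',\v) = (2\|d\|_\infty\boldsymbol A\Delta\p,\ \W\Delta\x)$ enter, where $\Delta(\cdot)$ denotes the relevant difference.

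Next I would compute the dual norms of the weighted norms $\|\cdot\|_\U,\|\cdot\|_\V$. As both are separable weighted sums, the dual inverts each weight and swaps $\|\cdot\|_1$ for $\|\cdot\|_\infty$ in the $\U$-block: a dual vector with blocks $g_{x_i},g_{p_i}$ and $h_\q,h_\z$ satisfies
\[
\|\cdot\|_{\U,*}^2 = m\ln n\sum_{i=1}^m\|g_{x_i}\|_\infty^2 + 2m\ln n\sum_{i=1}^m\|g_{p_i}\|_\infty^2, \quad \|\cdot\|_{\V,*}^2 = mn\|h_\q\|_2^2 + \frac{8mn\|d\|_\infty^2}{(\lambda_{\min}^+(W))^2}\|h_\z\|_2^2.
\]
The crucial elementary facts are the operator norms of the incidence matrix: every column of $A$ contains exactly two ones, hence $\|A\|_{1\to2}=\sqrt2$ and $\|A^\top\|_{2\to\infty}=\sqrt2$; together with $\|\W\|_2=\lambda_{\max}(W)$ and $\|d\|_\infty=\max_{i,j}C_{ij}$.

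To bound $L_{\u\v}$ I insert the increment into $\|\cdot\|_{\U,*}^2$, using $\|\cdot\|_\infty\le\|\cdot\|_2$ on the $\x$-block and $\|A^\top\Delta q_i\|_\infty\le\sqrt2\|\Delta q_i\|_2$ on the $\p$-block, obtaining
\[
\|\nabla_\u S(\u,\v)-\nabla_\u S(\u,\v')\|_{\U,*}^2 \le m\ln n\,\lambda_{\max}(W)^2\|\Delta\z\|_2^2 + 16\,m\ln n\,\|d\|_\infty^2\|\Delta\q\|_2^2.
\]
Comparing coefficient-by-coefficient with $\|\Delta\v\|_\V^2 = \tfrac1{mn}\|\Delta\q\|_2^2 + \tfrac{(\lambda_{\min}^+(W))^2}{8mn\|d\|_\infty^2}\|\Delta\z\|_2^2$, the $\|\Delta\z\|_2^2$-coefficient forces $L_{\u\v}^2\ge 8m^2 n\ln n\|d\|_\infty^2\chi^2$ — this is exactly where the weight on $\z$ in $\|\cdot\|_\V$ converts $\lambda_{\max}^2/(\lambda_{\min}^+)^2$ into $\chi^2$ — while the $\|\Delta\q\|_2^2$-coefficient only forces $L_{\u\v}^2\ge 16m^2 n\ln n\|d\|_\infty^2$. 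The constant $L_{\v\u}$ is handled symmetrically, bounding the $\q$-block by $\|A\Delta p_i\|_2\le\sqrt2\|\Delta p_i\|_1$ and the $\z$-block by $\|\W\Delta\x\|_2\le\lambda_{\max}(W)\|\Delta\x\|_2\le\lambda_{\max}(W)\|\Delta\x\|_1$, which yields the same two constraints (here the factor $\tfrac12$ on $\|p_i\|_1^2$ in $\|\cdot\|_\U$ absorbs $\|A\|_{1\to2}^2=2$).

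Finally, since $\chi\ge1$, the stated value $L_{\u\v}=L_{\v\u}=8m\sqrt{2n\ln n}\,\|d\|_\infty\chi$, i.e.\ $L^2=128\,m^2 n\ln n\,\|d\|_\infty^2\chi^2$, dominates both lower bounds at once ($128\chi^2\ge16$ and $128\ge8$), so it is a valid common smoothness constant. The main obstacle — the only non-mechanical step — is recognizing that the relevant operator norm of the incidence matrix is $\sqrt2$ rather than the naive $\sqrt{2n}$; the factor $\sqrt{2n\ln n}$ in the final constant originates from the $1/(m\ln n)$ and $1/(mn)$ scalings of the two norms, not from a loose estimate of $A$. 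Using the crude operator-norm bound would inflate the constant by a factor of $n$ and spoil the claimed rate.
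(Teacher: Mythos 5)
Your overall route---direct verification through the dual norms, resting on the two key facts that $\|A\|_{1\to 2}=\sqrt{2}$ and that the weight on $\z$ inside $\|\cdot\|_{\V}$ converts $\lambda_{\max}(W)/\lambda_{\min}^+(W)$ into $\chi$---is essentially the same computation the paper performs; the paper merely packages it through its general Lemma~\ref{lemma:smothness_constants}, plugging in $R_\sX^2=m\ln n$, $R_\sP^2=2m\ln n$, $R_\sQ^2=mn$ and the bound $R_\sZ$ of Lemma~\ref{lemma:constrained_lagrange_multipiers_wb}, and it obtains $L_{(\bx,\bp)\bq}=2\sqrt{2}\|d\|_\infty$ from exactly your incidence-matrix observation. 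However, your table of partial gradients contains a genuine error that propagates into both displayed bounds: in \eqref{eq:def_saddle_prob2} the vector $\b=[x_1^\top,y_1^\top,\ldots,x_m^\top,y_m^\top]^\top$ \emph{contains the variables} $x_i$---the term $-2\|d\|_\infty\,\b^\top\q$ is precisely what encodes the marginal constraint $\pi\one=x$ and couples $\x$ with $\q$. Hence $\nabla_\x S=\W\z-2\|d\|_\infty\,\mathcal{E}\q$, not $\W\z$, where $\mathcal{E}$ is the block matrix extracting $\{[q_i]_{1\ldots n}\}_{i=1}^m$ (the matrix $\mathcal{E}$ appearing in the paper's own computation). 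Consequently the $\x$-block of $\nabla_\u S(\u,\v)-\nabla_\u S(\u,\v')$ is $\W\Delta\z-2\|d\|_\infty\,\mathcal{E}\Delta\q$, and symmetrically the $\q$-block of $\nabla_\v S(\u,\v)-\nabla_\v S(\u',\v)$ is $2\|d\|_\infty(\boldsymbol A\Delta\p-\mathcal{E}^\top\Delta\x)$; your two dual-norm estimates omit these contributions. (Your conclusion $L_{\u\u}=L_{\v\v}=0$ does survive, because the dropped terms are still cross-terms in $(\u,\v)$.)

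The gap is repairable without changing the final constant. Using $\|\mathcal{E}\Delta\q\|_2\le\|\Delta\q\|_2$, $\|\mathcal{E}^\top\Delta\x\|_2^2\le\sum_{i=1}^m\|\Delta x_i\|_1^2$ and $(a+b)^2\le 2a^2+2b^2$, the corrected coefficients for $L_{\u\v}$ become at most $2m\ln n\,\lambda_{\max}(W)^2$ on $\|\Delta\z\|_2^2$ and $24\,m\ln n\,\|d\|_\infty^2$ on $\|\Delta\q\|_2^2$, so the coefficient matching now requires $L^2\ge 16\,m^2 n\ln n\,\|d\|_\infty^2\chi^2$ and $L^2\ge 24\,m^2 n\ln n\,\|d\|_\infty^2$ (and analogously for $L_{\v\u}$), both still dominated by the claimed $L^2=128\,m^2 n\ln n\,\|d\|_\infty^2\chi^2$ since $\chi\ge 1$. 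But as written, the gradient formulas and the two displayed inequalities are false for the actual objective of \eqref{eq:def_saddle_prob2}, so you must restore the $\mathcal{E}$-terms; note that the paper's computation of $L_{(\bx,\bp)\bq}$ carries exactly this term, bounding $\|\boldsymbol A\p-\mathcal{E}^\top\x\|_2^2\le\sum_i\|x_i\|_2^2+\sum_i\|Ap_i\|_2^2$ before invoking $\|Ap_i\|_2^2\le 2\|p_i\|_1^2$.
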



\add{Then similarly to \eqref{eq:def_L_zeta} $L_\zeta $  can be defined as follows
\begin{align}\label{eq:def_L_zetaWB}
L_\zeta = 2\max\{L_{\bu\bu},L_{\bu\bv},L_{\bv\bu},L_{\bv\bv}\}= 16m\sqrt{2n\ln n}\max_{i,j}C_{ij} \chi.
\end{align}}
These two Lemmas proved in Appendix
\ref{app:wb} together with \eqref{eq:def_L_zetaWB} make it possible to obtain the  convergence guarantee for  
the DMP algorithm (see  Algorithm \ref{MP_WB_distr}  in Appendix \ref{app:algWB}).    

\add{
\begin{theorem}\label{Th:first_alg11}

After 
$ N={(4 + 17\sqrt{2})L_\zeta}/{(m\e)}$
iterations, Algorithm \ref{MP_WB_distr}   with stepsize 
$\alpha=1/L_\zeta$, where $L_{\zeta}$ is defined in \eqref{eq:def_L_zetaWB},
yields output $\hat  \x^N = [(\hat x_1^N)^\top, \dots, (\hat x_m^N)^\top]^\top$ such that
\begin{align*}
     \frac{1}{m} \sum_{i=1}^m \mathcal  W(\hat x^N_{av}, y_i) -  \min_{x \in \Delta_n} \frac{1}{m} \sum_{i=1}^m \mathcal  W(x, y_i) \leq \e ~ ~ \text{ and } ~ ~ 
        \norm{\mW\hat\bx^N}_2 = O\left(  \frac{\sqrt{m}  \lambda_{\min}^+(W) \e}{\sqrt{n}\max_{i,j}C_{ij}}
        \right),
\end{align*}
where  $\hat x^N_{av} = \frac{1}{m}\sum_{i=1}^m \hat x_i^N$. 
The per node complexity of Algorithm \ref{MP_WB_distr} is 
\[O\left(\frac{n^2}{\e}\sqrt{n\ln n}  \max_{i,j}C_{ij}^{2} \chi  \right).\]
\end{theorem}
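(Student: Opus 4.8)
The plan is to read the Wasserstein barycenter saddle problem \eqref{eq:def_saddle_prob2} as the special instance of the general reformulation \eqref{eq:saddle_point} in which the maximization block $\by$ together with its consensus constraint $\mWy\by=0$ (and hence the multiplier $\bs$) is \emph{absent}: the data measures $y_i$ enter only through the fixed vectors $b_i=[x^\top,y_i^\top]^\top$. Under this identification I would invoke Theorem~\ref{Th:distr_MP} verbatim, with stepsize $\alpha=1/L_\zeta$ and $L_\zeta$ given by \eqref{eq:def_L_zetaWB}, and with $R_\sZ$ as in Lemma~\ref{lemma:constrained_lagrange_multipiers_wb}. Specialized to the WB instance, after $N$ iterations it delivers the gap bound
\[
\max_{\bq\in\sQ}F(\ol{\hat\bx}^N,\hat\bp^N,\bq)-\min_{\substack{\bx\in\X,\bp\in\Pp\\ \W\x=0}}F(\bx,\bp,\hat\bq^N)\le\frac{(4+17\sqrt2)L_\zeta}{N},
\]
together with the consensus residual $\norm{\mW\hat\bx^N}_2\le 17L_\zeta/(NR_\sZ)$. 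Choosing $N=(4+17\sqrt2)L_\zeta/(m\e)$ makes the right-hand side of the gap equal to $m\e$, so that dividing through by $m$ (recall $F=\sum_i f_i$ and $f=\tfrac1m\sum_i f_i$) turns it into a bound $\le\e$ on the corresponding gap for $f$.

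The core step is then to convert this saddle gap into a gap for the genuine WB objective, using the exact-penalty reformulation of \cite{dvinskikh2020improved}. Maximizing the bilinear term over $q\in[-1,1]^{2n}$ yields the $\ell_1$ penalty, $\max_{q}f_i(x,p,q)=d^\top p+2\|d\|_\infty\|Ap-b\|_1$, and the exact-penalty property (valid precisely because the coefficient $2\|d\|_\infty=2\max_{i,j}C_{ij}$ dominates the relevant dual norm) gives $\min_{p\in\Delta_{n^2}}[d^\top p+2\|d\|_\infty\|Ap-b\|_1]=\mathcal W(x,y)$. Applying the first identity at $(\ol{\hat\bx}^N,\hat\bp^N)$ and then discarding the minimization over $p$ produces $\tfrac1m\max_{\bq}F(\ol{\hat\bx}^N,\hat\bp^N,\bq)\ge\tfrac1m\sum_i\mathcal W(\hat x_{av}^N,y_i)$, while bounding $F(\bx,\bp,\hat\bq^N)\le\max_{\bq}F(\bx,\bp,\bq)$ pointwise and noting that $\W\x=0$ forces $x_1=\dots=x_m=x$ gives $\tfrac1m\min_{\bx,\bp:\W\x=0}F(\bx,\bp,\hat\bq^N)\le\min_{x\in\Delta_n}\tfrac1m\sum_i\mathcal W(x,y_i)$. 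Since the first term lower-bounds the barycenter value at the output and the second upper-bounds the optimal value, chaining them yields exactly
\[
\frac1m\sum_{i=1}^m\mathcal W(\hat x_{av}^N,y_i)-\min_{x\in\Delta_n}\frac1m\sum_{i=1}^m\mathcal W(x,y_i)\le\e .
\]
Evaluating at the averaged point $\hat x_{av}^N$ (equivalently at the consensus iterate $\ol{\hat\bx}^N$) is what makes this legitimate, since $\hat\bx^N$ itself need not satisfy consensus.

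For the consensus residual I would substitute $N=(4+17\sqrt2)L_\zeta/(m\e)$ and $R_\sZ^2=8mn\max_{i,j}C_{ij}^2/(\lambda_{\min}^+(W))^2$ into $\norm{\mW\hat\bx^N}_2\le 17L_\zeta/(NR_\sZ)$; the factor $L_\zeta$ cancels and $m/\sqrt m=\sqrt m$ produces the claimed $O\!\big(\sqrt m\,\lambda_{\min}^+(W)\,\e/(\sqrt n\,\max_{i,j}C_{ij})\big)$. For the per-node complexity I would insert $L_\zeta$ from \eqref{eq:def_L_zetaWB} into the iteration count, giving $N=O\!\big(\sqrt{n\ln n}\,\max_{i,j}C_{ij}\,\chi/\e\big)$, and multiply by the cost of one iteration per node, which is dominated by the sparse products $Ap_i$, $A^\top q_i$ and the entropic mirror step over $\Delta_{n^2}$, each of order $O(n^2)$; collecting these factors yields the stated per-node complexity $O\!\big(\tfrac{n^2}{\e}\sqrt{n\ln n}\,\max_{i,j}C_{ij}^2\,\chi\big)$.

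I expect the main obstacle to be the translation in the second paragraph: one must keep the two one-sided inequalities pointed in compatible directions (the output value is squeezed from below through $\max_{\bq}$, the optimum from above through the weak duality inequality $F(\cdot,\hat\bq^N)\le\max_{\bq}F$) and invoke the exact-penalty equivalence with the correct coefficient $2\|d\|_\infty$, after which everything else is bookkeeping. A secondary point requiring care is verifying at the outset that the WB instance really falls under Theorem~\ref{Th:distr_MP} despite lacking the $\by,\bs$ block, so that both the specialized gap and the consensus bound are applied on solid ground.
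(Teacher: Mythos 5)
Your proposal follows essentially the same route as the paper's own proof: invoke Theorem~\ref{Th:distr_MP} for the WB instance, use the $\ell_1$ exact-penalty representation $\mathcal W(x,y)=\min_{p\in\Delta_{n^2}}\max_{q\in[-1,1]^{2n}}\bigl[d^\top p+2\|d\|_\infty(q^\top Ap-b^\top q)\bigr]$ to convert the saddle gap at $(\ol{\hat\bx}^N,\hat\bp^N,\hat\bq^N)$ into the barycenter gap, then substitute $N=(4+17\sqrt2)L_\zeta/(m\e)$ and $R_\sZ$ from Lemma~\ref{lemma:constrained_lagrange_multipiers_wb} into the consensus bound and multiply the $O(n^2)$ per-iteration cost (sparsity of $A$) by $N$, exactly as the appendix does. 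One minor bookkeeping note: plugging $L_\zeta$ from \eqref{eq:def_L_zetaWB} into $N$ yields $\max_{i,j}C_{ij}$ to the \emph{first} power in the per-node complexity (which is what the paper's appendix also obtains), so your claim that collecting factors gives the stated $\max_{i,j}C_{ij}^2$ reproduces a discrepancy internal to the theorem statement itself rather than an error in your argument.
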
}
The proof of  follows from Theorem \ref{Th:distr_MP} and is given in the Appendix 
\ref{app:wb}.

\subsubsection{\add{Discussion of the convergence results.}}

We comment on the   complexity of the DMP algorithm compared to the existing state-of-the-art methods: the iterative Bregman projections (IBP) algorithm, its accelerated versions and primal dual algorithm (ADCWB), see  Table \ref{Tab:distr_comp}. All of these methods use entropic regularization of Wasserstein metric with parameter $\gamma$ which must be taken proportionally to accuracy $\e$.

\begin{table}[ht!]
\centering
\caption{\add{ Distributed algorithms  for the WB problem } }
\begin{threeparttable}
\begin{tabular}{lll}
Algorithm   &  Per node complexity$^{(1)}$ \\
 \midrule
 IBP \cite{benamou2015iterative,kroshnin2019complexity}    &  
$n^2/(\gamma\e) $    \\
Accelerated IBP  \cite{guminov2021combination}              & 
   $ n^2 \sqrt{n}/\sqrt{\gamma \e} $ 
   \\
  FastIBP    \cite{lin2020fixed}       & 
$n^2\sqrt[3]{n} /(\gamma \e )^{2/3}$   
\\
ADCWB \cite{dvurechenskii2018decentralize,dvinskikh2019primal} &  $ n^2\sqrt{ n }/\sqrt{\gamma\e}$ \\
\midrule
    \textbf{DMP} (this work)    & 
   $ n^2\sqrt{ n}  / \e $
   \\
   \bottomrule
\end{tabular}
\begin{tablenotes}
\item[]  $^{(1)}$ The bounds are obtained by using the Chebyshev acceleration.
\end{tablenotes}    
\end{threeparttable}
\label{Tab:distr_comp}
\end{table}

\subsection{Numerical experiments}
In this section, we tested the DMP algorithm for 
computing WB (see Algorithm \ref{MP_WB_distr} in the Appendix \ref{app:algWB}) on Gaussians measures and the \textit{notMNIST} dataset.

 \subsubsection{Evaluation on different network architectures.} We demonstrate the performance of the DMP algorithm on different network architectures with different conditional number $\chi$:  complete graph,  star graph, cycle graph and the Erd\H{o}s-R\'enyi random graphs with the probability of edge creation $p=0.5$ and $p=0.4$  under the random seed $=10$. As the true barycenter of Gaussian measures can be  calculated  theoretically \cite{delon2020wasserstein}, we use them to study the convergence of the DMP to the non-optimality gap. 
 We randomly generated  10 Gaussian measures   with equally spaced support of 30 points in $[-10, -10]$,  mean from  
 $[-5, 5]$ and variance from $[0.8, 1.8]$. 
\add{ 
Figure \ref{fig:DMP conv} supports Theorem \ref{Th:first_alg11} and presents the convergence of the DMP to the function non-optimality gap and the distance to the consensus. The smaller the condition number, the faster the convergence.
}

\begin{figure}[ht!]
\centering
\begin{minipage}{0.5\textwidth}
  \centering
\includegraphics[width =  \textwidth ]{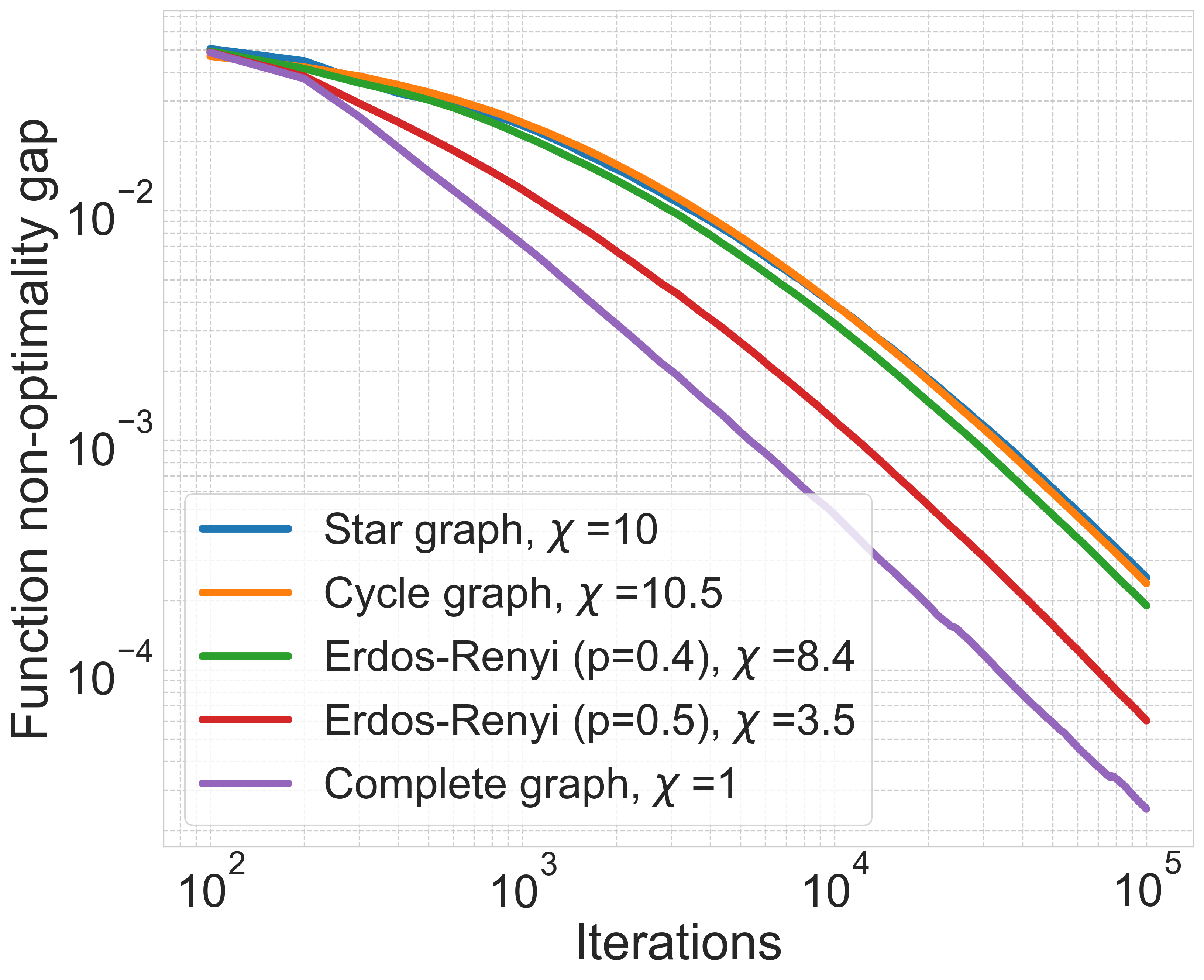}
\end{minipage}%
\begin{minipage}{0.5\textwidth}
  \centering
\includegraphics[width =  \textwidth ]{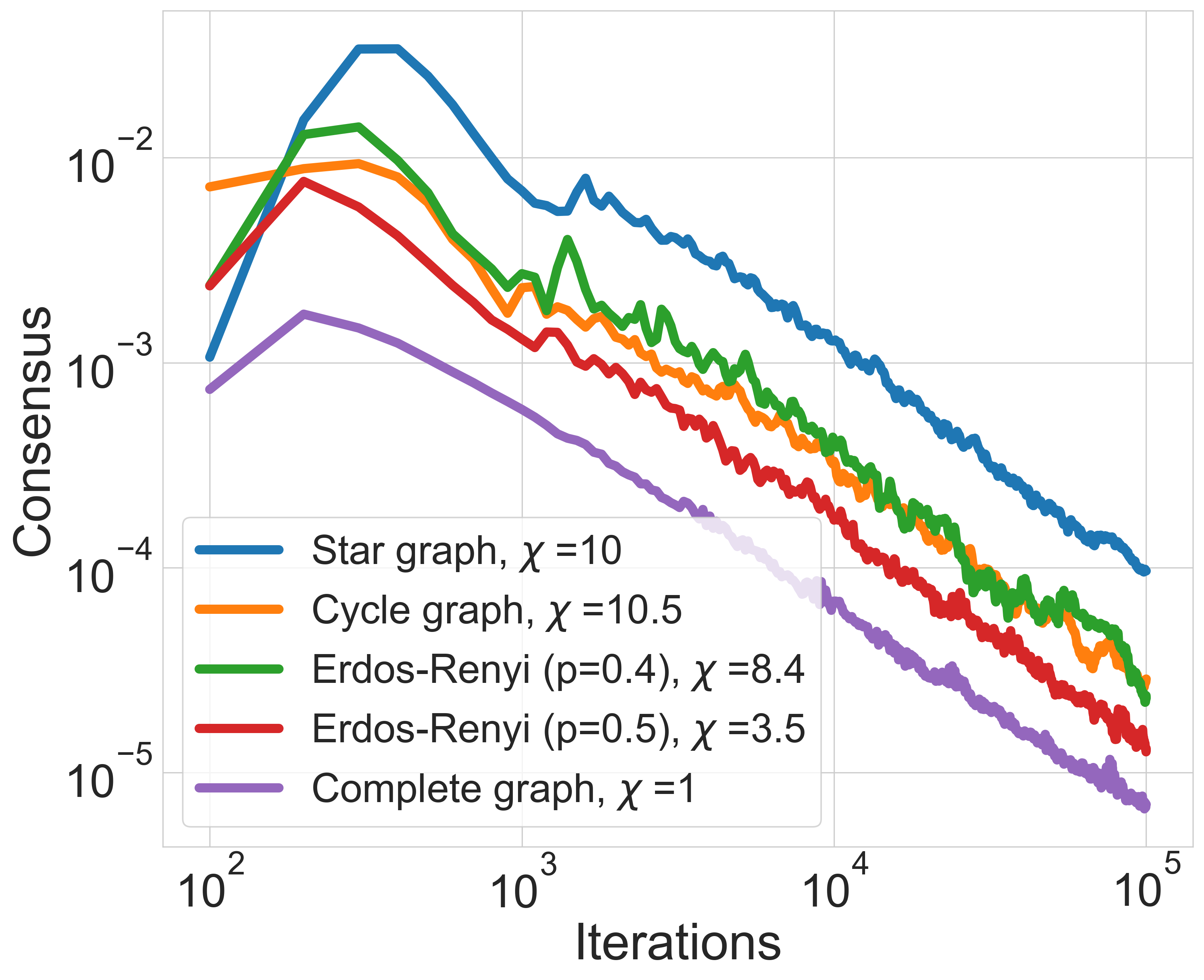}
\end{minipage}
\caption{\add{ Convergence of the DMP algorithm  on different network architectures.   }}
\label{fig:DMP conv}
\end{figure}


\subsubsection{Comparison with existing methods.}
Next we compare the DMP with 
  the most popular modern algorithms: the IBP   \cite{benamou2015iterative} and the ADCWB  \cite[Algorithm 4]{dvurechenskii2018decentralize}.
We illustrate  a well-known issue: 
 numerical instability of regularized algorithms with a small value of
parameter $\gamma$ to solve the WB problem. 
 We ran the IBP and the ADCWB algorithms with different values of the regularization parameter $\gamma $ starting from $\gamma = 0.1$ and gradually decreasing its value to $\gamma =10^{-4}$.   The number of iterations was taken proportionally to $1/\gamma$ in the IBP and proportionally to $1/ \sqrt \gamma$ in the ADCWB according to the theoretical bounds.  Figure \ref{fig:comparis_conv} shows that for a certain value of $\gamma$ (depending on the the experiment set and the number of method iterations) the regularized algorithms diverge.  Our unregularized  DMP algorithm is capable to achieve any accuracy, the more iteration the better accuracy. We ran it to achieve about $10^{-8}$ accuracy, probably the machine accuracy.

\begin{figure}[ht!]
 \centering
  \includegraphics[width = 0.6\textwidth]{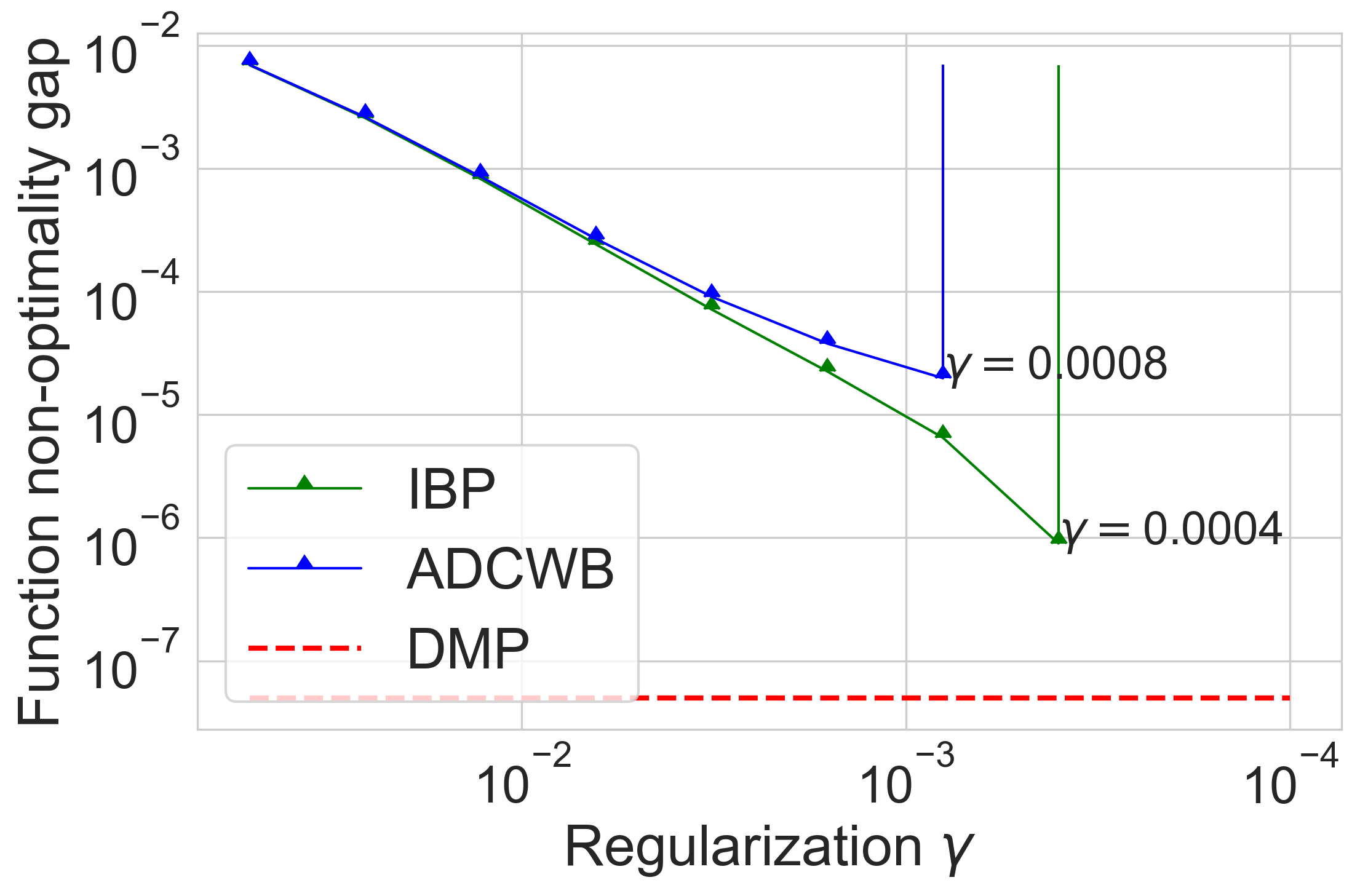}
  \caption{\add{Numerical instability of entropy-regularized based methods}}
\label{fig:comparis_conv}
\end{figure}

We also illustrate the non-stability of the IBP and the ADCWB algorithms run with $\gamma=10^{-4}$ on the \textit{notMNIST} dataset, in particular for the  letter `B' presented in various fonts. Figure \ref{fig:notMNIST} shows the best barycenters before the regularized algorithms will  diverge. 
\begin{figure}[ht!]
\centering
\begin{subfigure}[b]{.2\textwidth}
  \centering
  \includegraphics[width =  \textwidth ]{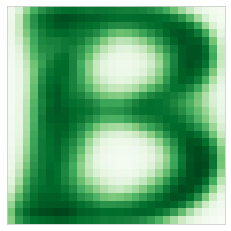}
\end{subfigure}
\begin{subfigure}[b]{.2\textwidth}
  \centering
  \includegraphics[width =  \textwidth ]{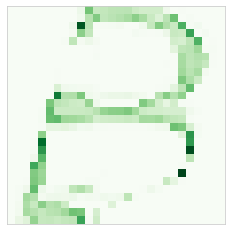}
\end{subfigure}
\begin{subfigure}[b]{.2\textwidth}
  \centering
  \includegraphics[width =  \textwidth ]{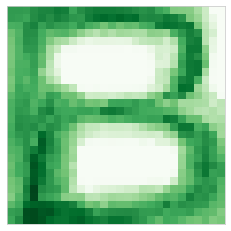}
\end{subfigure}
\caption{\add{Barycenters of letter `B' found by the DMP (left), the IBP  (middle) and  the ADCWB (right).}}
\label{fig:notMNIST}
\end{figure}

\section{Conclusion}\label{sec:conclusions}

We proposed a decentralized method for saddle point problems based on non-Euclidean Mirror-Prox algorithm. Our reformulation is built upon moving the consensus constraints into the problem by adding Lagrangian multipliers. As a result, we get a common saddle point problem that includes both primal and dual variables. After that, we employ the Mirror-Prox algorithm and bound the norms of dual variables at solution to assist the theoretical analysis. Finally, we demonstrate the effectiveness of our approach on the problem of computing Wasserstein barycenters  (both theoretically and numerically).


\bibliography{references}
\bibliographystyle{abbrv}

\section{Supplementary for Section \ref{sec:problem_statement}}\label{app:problem_statement}

Define a sequence of Chebyshev polynomials as $T_0(\beta) = 1,~ T_1(\beta) = \beta$ and $T_{k+1}(\beta) = 2\beta T_k(\beta) - T_{k-1}(\beta)$ for $k\geq 1$. After that, let $c_1 = \frac{\sqrt\chi - 1}{\sqrt\chi + 1}$, $c_2 = \frac{\chi + 1}{\chi - 1}$, $c_3 = \frac{2}{\lambda_{\max}(\tilde W) + \lambda_{\min}^+(\tilde W)}$ and introduce
\begin{align*}
    P_K(\beta) = 1 - \frac{T_K(c_2(1 - \beta))}{T_K(c_2)}.
\end{align*}
Now let us consider a new communication matrix $P_K(\tilde\mW)$ with $K = \lfloor \sqrt\chi \rfloor$. As shown in \cite{auzinger2011iterative}, the spectrum of $P_K(\tilde\mW)$ lies in $\sbraces{1 - \frac{2c_1^K}{1 + c_1^{2K}}; 1 + \frac{2c_1^K}{1 + c_1^{2K}}}$ and therefore $\lambda_{\max}(P_K(\tilde\mW)) = O(1)$, $\lambda_{\min}^+(P_K(\tilde\mW)) = O(1)$ and $\chi(P_K(\tilde\mW))= O(1)$.

\section{Smoothness constants for Mirror-Prox}\label{app:mp_smoothness_constants}

\subsection{Estimating Lipschitz constants for $S(\bu,\bv)$}
We start by deriving the Lipschitz constants $L_{\bu\bu}$, $L_{\bv\bv}$, $L_{\bu\bv}$, $L_{\bv\bu}$ of the function $S(\bu,\bv)$ in \eqref{eq:saddle_point}. Recall that for an operator $A$ which acts from a space $V$ with a norm $\|v\|_v$ to a space $W$ with a norm $\|w\|_w$, then these two norms naturally induce a norm of the operator $A$ as $\|A\|_{v\to w} = \max_{v,w}\{\|Av\|_w: \|v\|_v\leq 1\}$, which gives an inequality $\|Av\|_w \leq \|A\|_{v\to w} \|v\|_v$.
Recall also that \eqref{eq:saddle_point} is a reformulation of \eqref{eq:problem_reform} in a simpler form using the definitions $\bu=(\bx^\top,\bp^\top,\bs^\top)^\top$, $\bv=(\by^\top,\bq^\top,\bz^\top)^\top$ and 
\[
S(\bu,\bv) =   F(\bx, \bp, \by, \bq) + \angles{\bs, \mWy\by} + \angles{\bz, \mWx\bx}.
\]
Then for the corresponding partial derivatives we have
\begin{align*}
    \nabla_\bu S(\bu,\bv)= 
    \begin{bmatrix*}[l]
        ~~\nabla_\bx F(\bx, \bp, \by, \bq) + \mWx\bz \\
        ~~\nabla_\bp F(\bx, \bp, \by, \bq) \\
        ~~\mWy\by 
    \end{bmatrix*}, \;\;\;
    \nabla_\bv S(\bu,\bv)= 
    \begin{bmatrix*}[l]
        \nabla_\by F(\bx, \bp, \by, \bq) + \mWy\bs \\
        \nabla_\bq F(\bx, \bp, \by, \bq) \\
        \mWx\bx
    \end{bmatrix*}.
\end{align*}
Let $\sA_\sU = \sX\times\sP\times\R^{nd_q}$, $\sA_\sV = \sY\times\sQ\times \R^{nd_p}$ and $\sU = \sX\times\sP\times\{\bs:~ \norm{\bs}_2\leq R_\sS\}$, $\sV = \sY\times\sQ\times\{\bz:~ \norm{\bz}_2\leq R_\sZ\}$. \revoms{Recall that we introduced} norms $\norm{\bu}_{\sU}^2 = \norm{\bx}_{\sX}^2/R_\sX^2 + \norm{\bp}_{\sP}^2/R_\sP^2 + \norm{\bs}_{2}^2/R_\sS^2$, $\norm{\bv}_{\sV}^2 = \norm{\by}_{\sY}^2/R_\sY^2 + \norm{\bq}_{\sQ}^2/R_\sQ^2 + \norm{\bz}_{2}^2/R_\sZ^2$, which induce the dual norms $\norm{\bu}_{\sU,*}^2 = R_\sX^2\norm{\bx}_{\sX,*}^2 + R_\sP^2\norm{\bp}_{\sP,*}^2 + R_\sS^2\norm{\bs}_{2}^2$, $\norm{\bv}_{\sV,*}^2 = R_\sY^2\norm{\by}_{\sY,*}^2 + R_\sQ^2\norm{\bq}_{\sQ,*}^2 + R_\sZ^2\norm{\bz}_{2}^2$. The constant $L_{\bu\bu}$ has to satisfy $\|\nabla_\bu S(\bu,\bv)-\nabla_\bu S(\bu',\bv)\|_{\sU,*}\leq L_{\bu\bu}\|\bu-\bu'\|_{\sU}$ for all $\bu,\bu' \in \sA_\sU$. We have
\begin{align*}
    &\|\nabla_\bu S(\bu,\bv)-\nabla_\bu S(\bu',\bv)\|_{\sU,*}^2 =
    \left\|\begin{bmatrix*}[c]
        ~~\nabla_\bx F(\bx, \bp, \by, \bq) - \nabla_\bx F(\bx', \bp', \by, \bq) \\
        ~~\nabla_\bp F(\bx, \bp, \by, \bq) - \nabla_\bp F(\bx', \bp', \by, \bq) \\
        ~~0
    \end{bmatrix*}\right\|_{\sU,*}^2 \\
    &\quad = R_\sX^2\norm{\nabla_\bx F(\bx, \bp, \by, \bq) - \nabla_\bx F(\bx', \bp', \by, \bq)}_{\sX,*}^2 \\
    &\qquad + R_\sP^2\norm{\nabla_\bp F(\bx, \bp, \by, \bq) - \nabla_\bp F(\bx', \bp', \by, \bq)}_{\sP,*}^2 \\
\end{align*}
\begin{align*}
    &\leq (R_\sX^2 + R_\sP^2)
    \left\|\begin{bmatrix*}[l]
        ~~\nabla_\bx F(\bx, \bp, \by, \bq) - \nabla_\bx F(\bx', \bp', \by, \bq) \\
        ~~\nabla_\bp F(\bx, \bp, \by, \bq) - \nabla_\bp F(\bx', \bp', \by, \bq)
    \end{bmatrix*}\right\|_{(\sX,\sP),*}^2 \\
    &\leq L_{(\bx,\bp)(\bx,\bp)}^2(R_\sX^2 + R_\sP^2)\left\|\begin{bmatrix*}[l]
        ~~\bx-\bx' \\
        ~~ \bp-\bp'
    \end{bmatrix*}\right\|_{(\sX,\sP)}^2 \\
    &= L_{(\bx,\bp)(\bx,\bp)}^2 (R_\sX^2 + R_\sP^2) \cbraces{R_\sX^2\frac{\norm{\bx - \bx'}_\sX^2}{R_\sX^2} + R_\sP^2\frac{\norm{\bp - \bp'}_\sP^2}{R_\sP^2}} \\
    &\leq L_{(\bx,\bp)(\bx,\bp)}^2 (R_\sX^2 + R_\sP^2)^2\|\bu-\bu'\|_{\sU}^2, 
\end{align*}
where we used that $\norm{(\bx,\bp)}_{(\sX,\sP),*}^2=\norm{\bx}_{\sX,*}^2+\norm{\bp}_{\sP,*}^2$ since $\norm{(\bx,\bp)}_{(\sX,\sP)}^2=\norm{\bx}_{\sX}^2+\norm{\bp}_{\sP}^2$, Assumption \ref{assum:F_Lip_smooth}. Thus, $L_{\bu\bu}=L_{(\bx,\bp)(\bx,\bp)}(R_\sX^2 + R_\sP^2)$. The equality $L_{\bv\bv}=L_{(\by,\bq)(\by,\bq)}(R_\sY^2 + R_\sQ^2)$ is proved in the same way.

Let us estimate $L_{\bu\bv}$, which has to satisfy $\|\nabla_\bu S(\bu,\bv)-\nabla_\bu S(\bu,\bv')\|_{\sU,*}\leq L_{\bu\bv}\|\bv-\bv'\|_{\sV}$ for all $\bv,\bv' \in \sA_\sV$.
We have
\begin{align*}
    &\|\nabla_\bu S(\bu,\bv)-\nabla_\bu S(\bu,\bv')\|_{\sU,*}^2 \\
    &\quad=
    \left\|\begin{bmatrix*}[l]
        ~~\nabla_\bx F(\bx, \bp, \by, \bq) + \mWx\bz - \nabla_\bx F(\bx, \bp, \by', \bq') - \mWx\bz' \\
        ~~\nabla_\bp F(\bx, \bp, \by, \bq) - \nabla_\bp F(\bx, \bp, \by', \bq') \\
        ~~\mWy\by- \mWy\by'
    \end{bmatrix*}\right\|_{\sU,*}^2\\
    &\overset{\circledOne}{=} R_\sX^2\|\nabla_\bx F(\bx, \bp, \by, \bq) - \nabla_\bx F(\bx, \bp, \by', \bq') + \mWx(\bz - \bz')\|_{\sX,*}^2 \\
    &\qquad+ R_\sP^2\|\nabla_\bp F(\bx, \bp, \by, \bq)-\nabla_\bp F(\bx, \bp, \by', \bq') \|_{\sP,*}^2 + R_\sS^2\|\mWy(\by-\by') \|_2^2 \\
    &\overset{\circledTwo}{\leq} 2R_\sX^2\|\nabla_\bx F(\bx, \bp, \by, \bq) - \nabla_\bx F(\bx, \bp, \by', \bq')\|_{\sX,*}^2 + 2R_\sX^2\|\mWx(\bz - \bz') \|_{\sX,*}^2 \\
    &\qquad+ R_\sP^2\|\nabla_\bp F(\bx, \bp, \by, \bq) - \nabla_\bp F(\bx, \bp, \by', \bq')\|_{\sP,*}^2 + R_\sS^2\|\mWy(\by-\by') \|_2^2\\
    &\overset{\circledThree}{\leq} 2(R_\sX^2 + R_\sP^2) \left\|\begin{bmatrix*}[l]
        ~~\nabla_\bx F(\bx, \bp, \by, \bq) - \nabla_\bx F(\bx, \bp, \by', \bq') \\
        ~~\nabla_\bp F(\bx, \bp, \by, \bq) - \nabla_\bp F(\bx, \bp, \by', \bq')
    \end{bmatrix*}\right\|_{(\X,\Pp),*}^2 \\
    &\qquad+ 2R_\sX^2 \|\mWx\|_{2\to(\bx,*)}^{2}\|\bz - \bz'\|_2^2 + R_\sS^2\|\mWy\|_{\by\to2}^{{2}} \|\by-\by'\|_{\sY}^2 \\
    &\overset{\circledFour}{\leq} 2 L_{(\bx,\bp)(\by,\bq)}^2 (R_\sX^2 + R_\sP^2) \left\|\begin{bmatrix*}[l]
        ~~\by-\by' \\
        ~~ \bq-\bq'
    \end{bmatrix*}\right\|_{(\sY,\sQ)}^2 + 2R_\sX^2 \|\mWx\|_{2\to(\bx,*)}^{{2}} \|\bz - \bz'\|_{2}^2 \\
    &\qquad + R_\sS^2 \|\mWy\|_{\by\to2}^{{2}} \|\by-\by'\|_{\sY}^2 \\
    &\leq \Big(2L_{(\bx,\bp)(\by,\bq)}^2 (R_\sX^2 + R_\sP^2)(R_\sY^2 + R_\sQ^2) + 2R_\sX^2 R_\sZ^2\norm{\mWx}_{2\to(\bx,*)}^2 \\
    &\qquad+ 2R_\sY^2 R_\sS^2\norm{\mWy}^2_{\by\to2}\Big) \norm{\bv - \bv'}_\sV^2,
\end{align*}
where we used in $\circledOne$ the definition of $\|\bu\|_{\sU,*}^2$; in $\circledTwo$ the inequality $(a+b)^2\leq 2a^2+2b^2$; in $\circledThree$ that $\norm{(\bx,\bp)}_{(\bx,\bp),*}^2=\norm{\bx}_{\sX,*}^2+\norm{\bp}_{\sP,*}^2$ since $\norm{(\bx,\bp)}_{(\sX,\sP)}^2=\norm{\bx}_{\sX}^2+\norm{\bp}_{\sP}^2$ and the definition of the operator norm; in $\circledFour$ Assumption \ref{assum:F_Lip_smooth}; and, finally, in $\circledFive$ the definition of $\|\bv \|_{\sV}^2$. Taking the square root of the derived inequality, we obtain 
\begin{align*}
L_{\bu\bv}\leq\sqrt{2}\max\Big\{&L_{(\bx,\bp)(\by,\bq)}\sqrt{(R_\sX^2 + R_\sP^2)(R_\sY^2 + R_\sQ^2)}, \\
&R_\sX R_\sZ \norm{\mWx}_{2\to(\bx,*)},  R_\sY R_\sS \norm{\mWy}_{\by\to2}\Big\}.
\end{align*}
The bound for $L_{\bv\bu}$ is derived in the same way:
\begin{align*}
L_{\bv\bu}\leq\sqrt{2}\max\Big\{&L_{(\by,\bq)(\bx,\bp)}\sqrt{(R_\sX^2 + R_\sP^2)(R_\sY^2 + R_\sQ^2)}, \\
&R_\sX R_\sZ \norm{\mWx}_{x\to2},  R_\sY R_\sS \norm{\mWy}_{2\to(\by,*)}\Big\}.
\end{align*}

\section{Proof of lower bounds from Theorem \ref{Th:lower_bound}}\label{app:lower_bounds}

Let $B \subseteq \mathcal{V}$ be a subset of nodes of $\mathcal{G}$. For $\rho>0$ we define $B_{\rho} = \{v \in \mathcal{V} ~:~ \rho(B,v) \geq \rho\}$, where $\rho(B,v)$ is a distance between set $B$ and node $v$. Then, we construct the following  arrangement of bilinearly functions on nodes:
\begin{eqnarray}
\label{t2}
f_i (x,y) = 
\begin{cases}
\frac{m}{|B_{\rho}|}\cdot\frac{L}{4} x^T A_1 y + \frac{16\varepsilon}{R^2}\|x\|^2_2 - \frac{16\varepsilon}{R^2}\|y\|^2_2 - \frac{m}{|B_{\rho}|}\cdot \frac{L^2 R^2}{\varepsilon}e_1^T y, & i \in B_{\rho}, \\
\frac{m}{|B|} \cdot\frac{L}{4} x^T A_2 y + \frac{16\varepsilon}{R^2}\|x\|^2_2 - \frac{16\varepsilon}{R^2}\|y\|^2_2, & i \in B, \\
\frac{16\varepsilon}{R^2}\|x\|^2_2 - \frac{16\varepsilon}{R^2}\|y\|^2_2, & \text{otherwise},
\end{cases}
\end{eqnarray}
where $L, \varepsilon > 0$, $e_1 = (1,0 \ldots, 0)$ and
{\small
\begin{eqnarray*}
\label{t4}
A_1 = \left(
\begin{array}{cccccccc}
1&0 & & & & & &  \\
&1 &-2 & & & & &  \\
& &1 &0 & & & & \\
& & &1 &-2 & & & \\
& & & &\ldots &\ldots & & \\
& & & & &1  &-2   & \\
& & &   & & &1 &0 \\
& & &  & & & &1 \\
\end{array}
\right), ~~~
A_2 = \left(
\begin{array}{cccccccc}
1&-2 & & & & & &  \\
&1 &0 & & & & &  \\
& &1 &-2 & & & & \\
& & &1 &0 & & & \\
& & & &\ldots &\ldots & & \\
& & & & &1  &0   & \\
& & &   & & &1 &-2 \\
& & &  & & & &1 \\
\end{array}
\right).
\end{eqnarray*}
}
We will give the value of $\rho$ later.

\begin{lemma}
If $B_{\rho} \neq \varnothing$, in the global output of any procedure that satisfies Assumption \ref{ass_bbp}, after $T$ units of time, only the first $k =  \left\lfloor \frac{T-2t}{t + \rho\tau} \right\rfloor + 2$ coordinates can be non-zero, the rest of the $d-k$ coordinates are strictly equal to zero.
\end{lemma}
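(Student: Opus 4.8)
The plan is to track, throughout the entire black-box procedure of Assumption~\ref{ass_bbp}, the set of coordinate indices in which any vector stored in any node's memory is allowed to be nonzero, and to show that this \emph{frontier} can advance by at most one index per combination of a local step and a communication across the distance $\rho$ that separates $B$ from $B_\rho$. Since the output is a span of the local memories, bounding the frontier bounds the support of the global output.

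First I would record the structural properties of the two matrices. Because $A_1$ and $A_2$ are upper bidiagonal with unit diagonal, multiplication by $A_1$ or $A_2$ cannot raise the largest nonzero index of a vector, as $(A_1 v)_i$ and $(A_2 v)_i$ depend only on $v_i, v_{i+1}$. The transposed matrices can raise it, but by exactly one index and only through the off-diagonal $-2$ entries: $A_1^\top$ produces a new nonzero at an odd index $2\ell+1$ only from a nonzero at the even index $2\ell$, whereas $A_2^\top$ produces a new nonzero at an even index $2\ell$ only from a nonzero at the odd index $2\ell-1$. In the Euclidean setting ${\rm Mirr}(g;x;X)=\text{proj}_X(x-g)$ with $X$ a ball centred at $0$ is just a scaling of $x-g$, so it does not enlarge $\text{supp}(x)\cup\text{supp}(g)$; the spans and gradients in Assumption~\ref{ass_bbp} likewise only take unions of supports. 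Finally, the gradient of the ``otherwise'' summand is the diagonal map $\tfrac{32\varepsilon}{R^2}(x,-y)$ and cannot extend the support at all, and the only inhomogeneous term, $-\tfrac{m}{|B_\rho|}\tfrac{L^2R^2}{\varepsilon}e_1^\top y$, lives solely on $B_\rho$ and touches only coordinate $1$.

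Combining these facts, the frontier can be advanced in only two ways: from an even index to the next odd index, which requires applying $A_1^\top$ and hence can occur only at a node of $B_\rho$; or from an odd index to the next even index, which requires $A_2^\top$ and hence can occur only at a node of $B$. I would then argue by induction on the frontier index $j$ that creating coordinate $j$ takes time at least $t+(j-1)(t+\rho\tau)$. Coordinate $1$ is born at a $B_\rho$ node from the source term $e_1$ in one local step, costing at least $t$. For the inductive step, take $j$ odd: coordinate $j$ can be created only at a $B_\rho$ node that already holds coordinate $j-1$; but $j-1$ is even, so it could only have been produced at a node of $B$, and by the definition of $B_\rho$ every path from $B$ to $B_\rho$ has length at least $\rho$, so transmitting it costs at least $\rho$ communication rounds, i.e.\ time $\rho\tau$, after which one further local step of cost $t$ is needed. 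The even case is symmetric. Hence $T_j\geq T_{j-1}+t+\rho\tau$, which unrolls to $T_j\geq t+(j-1)(t+\rho\tau)$.

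Consequently, at time $T$ no coordinate of index $j$ with $t+(j-1)(t+\rho\tau)>T$ can be nonzero anywhere in the network, and therefore not in the global output; rearranging shows that the number of possibly-nonzero coordinates is at most $1+\lfloor (T-t)/(t+\rho\tau)\rfloor$, which is in turn bounded by the claimed $k=\lfloor (T-2t)/(t+\rho\tau)\rfloor+2$. The main obstacle is the third paragraph: one must verify rigorously that the support-extension mechanism is genuinely confined to $B_\rho$ (odd steps) and $B$ (even steps) and cannot be short-circuited, carefully accounting for the facts that the gradients in Assumption~\ref{ass_bbp} may be evaluated at \emph{any} stored points, that spans are taken freely, and that communications and local computations may interleave asynchronously, so that the sequential dependency of coordinate $j$ on a distance-$\rho$ transfer of coordinate $j-1$ truly cannot be parallelised away.
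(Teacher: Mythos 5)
Your proposal is correct and follows essentially the same route as the paper: tracking the span/support frontier, exploiting the parity structure of $A_1^\top$ (odd advances, only at $B_\rho$) and $A_2^\top$ (even advances, only at $B$), and charging at least one local step plus $\rho$ communication rounds per frontier advance. Your initial accounting (one local step of cost $t$ for the first coordinate, versus the paper's two, since producing a nonzero first coordinate in \emph{both} the $x$- and $y$-memories costs a second local step) yields the slightly tighter bound $1+\lfloor (T-t)/(t+\rho\tau)\rfloor$, which, as you note, is at most the claimed $k=\lfloor (T-2t)/(t+\rho\tau)\rfloor+2$, so the lemma follows.
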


\begin{proof} Consider an arbitrary moment $T_0$. Following Assumption \ref{ass_bbp} one can write down how $H^x, H^y$ changes in one local step:
\begin{equation*}
   H^x_{i,T_0 + t} \subseteq 
   \begin{cases}
   \text{span}\left\{x, A_1y \right\},& i \in B_{\rho}\\
   \text{span}\left\{x, A_2y \right\},& i \in B \\
   \text{span}\left\{x \right\},& \text{otherwise} \\
   \end{cases}, 
\end{equation*}
\begin{equation*}
   H^y_{i,T_0 + t} \subseteq 
   \begin{cases}
   \text{span}\left\{e_1, y, A_1^T x \right\},& i \in B_{\rho}\\
   \text{span}\left\{y, A_2^T x \right\},& i \in B \\
   \text{span}\left\{y \right\},& \text{otherwise} \\
   \end{cases},
\end{equation*}
for $x \in H^x_{i,T_0}, \forall y \in H^y_{i,T}$. Here we take into account that the projection operator on the ball centered at $0$ has no effect on the expressions written above in terms of span.

The block-diagonal structure of matrices $A_1$ and $A_2$ plays an important role. In particular, let for some $i$ we have $H^x_{i,T_0}\subseteq \text{span}\left\{e_1,\ldots, e_{k_x} \right\}, H^y_{i,T_0} \subseteq \text{span}\left\{e_1,\ldots, e_{k_y} \right\}$, then for any number $H \in \mathcal{N}$ of local iterations (without communications) and $k = \max\{ k_x, k_y\}$, we get
\begin{equation}
\label{t56}
   H^x_{i,T_0 + tH}, H^y_{i,T_0 +tH} \subseteq 
   \begin{cases}
   \text{span}\left\{e_1,\ldots, e_{2\left\lceil\frac{k+1}{2}\right\rceil - 1} \right\} 
   ,& i \in B_{\rho}, \\
   \text{span}\left\{e_1,\ldots, e_{2\left\lceil\frac{k}{2}\right\rceil} \right\},
   & i \in B, \\
   \text{span}\left\{e_1,\ldots, e_{k} \right\},& \text{otherwise}. \\
   \end{cases}
\end{equation}
This fact leads to the main idea of the proof. At the initial moment of time $T=0$, we have all zero coordinates in the global output, since the starting points $x_0, y_0$ are equal to $0$. Using only local iterations (at least 2), we can achieve that for the nodes $B_{\rho}$ only the first coordinates of $x$ and $y$ can be non-zero, the rest coordinates are strictly zero. For the rest of the nodes, all coordinates remains strictly zero. Without communications, the situation does not change. Therefore, we need to make at least $\rho$ communications in order to have non-zero first coordinates in some node from $B$ (transfer of information from $B_{\rho}$ to $B$). Using \eqref{t56}, by local iterations (at least 1) at the node of the set $B$, one can achieve the first and second non-zero coordinates. Next the process continues with respect of \eqref{t56}. 

Hence, to get at least one node $i \in \mathcal{V}$ with the internal memory $H^x_{i}$,$H^y_{i} \subseteq \text{span}\left\{e_1,\ldots, e_{k_0} \right\}$, we need a minimum of $k_0 + 1$ local steps (at least 2 steps in the beginning, when we start from $0$, and then at least 1 local step in other cases -- see previous paragraph), as well as $(k_0 - 1) \rho$ communication rounds. In other words,
$$\max_{i}\left[\max \{k \in \mathcal{N} ~:~ \exists x\in H^x_{i,T}, ~\exists y\in H^y_{i,T}~ \text{s.t.}~ x_k \neq 0,  y_k \neq 0\} \right] \leq \left\lfloor \frac{T-2t}{t + \rho\tau} \right\rfloor + 2.$$ 
According to Assumption \ref{ass_bbp}, we have that the final global output is the union of all the outputs. Whence the statement of Lemma holds. 
\end{proof}

The previous lemma gives us an understanding of how quickly we can approximate the solution. In particular, in coordinates that can be non-zero we are able to have a value that absolutely coincides with the solution, but in zero coordinates this is impossible. It remains to understand what the solution even looks like. Considering the global objective function, we have:
\begin{align}
    \label{t144}
    f(x,y) &= \frac{1}{M} \sum\limits_{m=1}^M f_m(x,y) \nonumber \\
    &= \frac{1}{M} \left( |B_{\rho}| \cdot f_1(x,y) + |B| \cdot f_2(x,y) + (M - |B_{\rho}| - |B|) \cdot f_3(x,y)\right) \nonumber\\
    &= \frac{L}{2} x^T A y + \frac{16\varepsilon}{R^2}\|x\|^2_2 - \frac{16\varepsilon}{R^2}\|y\|^2_2 -  \frac{L^2 R^2}{\varepsilon} e_1^T y, ~~~\text{with}~~~A = \frac{1}{2}(A_1 + A_2).
\end{align}
With $\| A\| \leq 2$, it is easy to check that $f_i$ is convex-concave and $f$ is also $\max\{L, \tfrac{32\varepsilon}{R^2}\}$-smooth.


\begin{lemma}[Lemma 3.3 from \cite{zhang2019lower}]\label{lem2}
Let $\alpha = \frac{32\varepsilon^2}{L^2 R^4}$ and $q = \frac{1}{2}\left(2 + \alpha - \sqrt{\alpha^2 + 4\alpha} \right) \in (0;1)$ -- the smallest root of $q^2 - (2 + \alpha) q + 1 = 0$, and let introduce the approximation of the solution $\bar y^*$:
\begin{equation*}
    \bar y^*_i = \frac{q^i}{1-q}.
\end{equation*}
Then error between the approximation and the real solution of \eqref{t144} can be bounded:
\begin{equation*}
    \|\bar y^* - y^*\|_2 \leq \frac{q^{d+1}}{\alpha(1-q)}.
\end{equation*}
\end{lemma}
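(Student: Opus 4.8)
The plan is to reduce the saddle point of \eqref{t144} to a single regularized tridiagonal linear system in $y^*$, to recognize $\bar y^*$ as the purely decaying mode of the associated constant-coefficient recurrence, and then to control the error through the residual of $\bar y^*$ in that system. First I would write the first-order optimality conditions for the (unconstrained, strongly convex--strongly concave) saddle point of $f$ in \eqref{t144}. Setting $\nabla_x f = 0$ gives $x^* = -\tfrac{L}{2}\cdot\tfrac{R^2}{32\varepsilon}\,A y^*$, and substituting this into $\nabla_y f = 0$ eliminates $x^*$ and leaves a system of the form $(A^\top A + \alpha I)\,y^* = e_1$, the right-hand side being proportional to $e_1$ because the only linear term in \eqref{t144} is $\propto e_1^\top y$. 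Here $A = \tfrac12(A_1+A_2)$ is upper bidiagonal with $1$ on the diagonal and $-1$ on the first superdiagonal, so $A^\top A$ is tridiagonal with diagonal $(1,2,\dots,2)$ and off-diagonals $-1$; hence $A^\top A + \alpha I$ has diagonal $(1+\alpha,2+\alpha,\dots,2+\alpha)$ and off-diagonals $-1$.

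Next I would read off the interior rows $2\le i\le d-1$ of this system, which form the homogeneous recurrence $-y_{i-1}+(2+\alpha)y_i-y_{i+1}=0$. Its characteristic equation is exactly $q^2-(2+\alpha)q+1=0$, whose two roots multiply to $1$; the decaying root is the $q\in(0,1)$ of the statement. I then claim that $\bar y^*_i=\tfrac{q^i}{1-q}$ satisfies every equation of the system except the last. Indeed, substituting $\bar y^*$ into an interior row yields $\tfrac{q^{i-1}}{1-q}\bigl(-1+(2+\alpha)q-q^2\bigr)=0$; and using $1+\alpha-q=\tfrac1q-1=\tfrac{1-q}{q}$ (a rearrangement of $q+\tfrac1q=2+\alpha$), the first row gives $(1+\alpha)\bar y^*_1-\bar y^*_2=\tfrac{q}{1-q}\cdot\tfrac{1-q}{q}=1$, matching the right-hand side $e_1$. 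Only the last row fails, since $-\bar y^*_{d-1}+(2+\alpha)\bar y^*_d=\tfrac{q^{d-1}}{1-q}\bigl(-1+(2+\alpha)q\bigr)=\tfrac{q^{d-1}}{1-q}\,q^2=\tfrac{q^{d+1}}{1-q}$, where the middle equality uses $-1+(2+\alpha)q=q^2$.

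Therefore the residual $r:=(A^\top A+\alpha I)\bar y^*-e_1$ is supported on its last coordinate alone, with $r_d=\tfrac{q^{d+1}}{1-q}$, so $\normtwo{r}=\tfrac{q^{d+1}}{1-q}$. Since $y^*=(A^\top A+\alpha I)^{-1}e_1$, we have $\bar y^*-y^*=(A^\top A+\alpha I)^{-1}r$, and because $A^\top A\succeq0$ the smallest eigenvalue of $A^\top A+\alpha I$ is at least $\alpha$, whence $\normtwo{(A^\top A+\alpha I)^{-1}}\le\tfrac1\alpha$. Combining the two estimates yields $\normtwo{\bar y^*-y^*}\le\tfrac1\alpha\,\normtwo{r}=\tfrac{q^{d+1}}{\alpha(1-q)}$, which is exactly the claimed bound.

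The main obstacle is the boundary bookkeeping in the second step: one must verify that the anomalous left-boundary entry $1+\alpha$ (rather than $2+\alpha$) is precisely what makes $\bar y^*$ satisfy the first row, while the right boundary produces the single surviving residual entry, and one must carry the constants through the elimination of $x^*$ so that the effective regularization of the reduced system is the stated $\alpha=\tfrac{32\varepsilon^2}{L^2R^4}$ (up to the overall normalization of $e_1$ matching the ansatz $\tfrac{q^i}{1-q}$) rather than merely an absolute-constant multiple of it. Once the reduced system is normalized correctly, the residual computation and the eigenvalue bound are routine, and the result coincides with Lemma~3.3 of \cite{zhang2019lower}.
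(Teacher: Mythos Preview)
Your proposal is correct and follows essentially the same route as the paper: reduce the saddle-point optimality conditions to the tridiagonal system $(A^\top A+\alpha I)\,y^*=e_1$, verify that $\bar y^*$ satisfies every row except the last (where it leaves the residual $\tfrac{q^{d+1}}{1-q}\,e_d$), and then bound $\|\bar y^*-y^*\|_2$ via $\|(A^\top A+\alpha I)^{-1}\|\le 1/\alpha$. The only cosmetic difference is that the paper obtains the reduced system by writing the dual function $g(y)=\min_x f(x,y)$ and setting $\nabla g(y^*)=0$, whereas you eliminate $x^*$ directly from the first-order conditions; the resulting linear system and the remainder of the argument are identical.
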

\begin{proof} Let us write the dual function for \eqref{t144}:
\begin{equation*}
    g(y) = -\frac{1}{2}y^T \left(\frac{L^2 R^2}{\varepsilon}A^T A + \frac{32\varepsilon}{ R^2} I \right)y + \frac{L^2 R^2}{\varepsilon} e_1^T y,
\end{equation*}
where one can easy found 
\begin{eqnarray*}
A A^T = \left(
\begin{array}{cccccccc}
1&-1 & & & & & &  \\
-1&2 &-1 & & & & &  \\
&-1 &2 & -1 & & & & \\
& & -1&2 &-1 & & & \\
& & &-1 &2 &-1 & & \\
& & & & &\ldots & & \\
& & & & &-1 &2 &-1 \\
& & & & & &-1 &2 \\
\end{array}
\right).
\end{eqnarray*}
The optimality of the dual problem ($\nabla g(y^*) = 0$) gives
\begin{equation*}
    \left(\frac{L^2 R^2}{\varepsilon}A^T A + \frac{32\varepsilon}{R^2} I \right)y^* = \frac{L^2 R^2}{\varepsilon} e_1,
\end{equation*}
or, with $\alpha = \tfrac{32\varepsilon^2}{L^2 R^4}$, we get \begin{equation*}
    \left(A^T A + \alpha I \right)y^* = e_1.
\end{equation*}
One can note that for the approximation of the solution $\bar y$ it holds
\begin{eqnarray*}
\left\{
\begin{array}{l}
(1+\alpha)\bar y_1^* - \bar y_2^* = 1 \\
-\bar y_1^* + (2 + \alpha) \bar y^*_2 - \bar y^*_3 = 0\\
\ldots \\
-\bar y_{d-2}^* + (2 + \alpha) \bar y^*_{d-1} - \bar y^*_d = 0\\
-\bar y^*_{d-1} + (2+\alpha) \bar y^*_d = \frac{q^{d+1}}{1-q}
\end{array}
\right .
\end{eqnarray*}
or in the form of a system of equations
\begin{equation*}
    \left(A^T A + \alpha I \right)\bar y^* = e_1 + \frac{q^{d+1}}{1-q}e_d.
\end{equation*}
Hence, we have
\begin{equation*}
    \bar y^* - y^* = \left(A^T A + \alpha I \right)^{-1}\frac{q^{d+1}}{1-q}e_d,
\end{equation*}
With the fact $\alpha^{-1} I \succeq \left(A^T A + \alpha I \right)^{-1} \succ 0$, we prove the statement of Lemma.
\end{proof}

Now we formulate a key lemma (similar to Lemma 3.4 from \cite{zhang2019lower}).

\begin{lemma} \label{lem56}
If $B_{\rho} \neq \varnothing$, let consider a distributed saddle point problem in the form \eqref{t2}. For any time $T$ one can found size of problem $d \geq \max \left\{ 2 \log_q \left( \frac{\alpha}{4\sqrt{2}}\right), 2k\right\}$, where $k = \left\lfloor \frac{T-2t}{t + \rho\tau} \right\rfloor + 2$, take $\alpha = \frac{32\varepsilon^2}{L^2 R^4}$ and $q = \frac{1}{2}\left(2 + \alpha - \sqrt{\alpha^2 + 4\alpha} \right) \in (0;1)$. Then, for any procedure satisfying Assumption \ref{ass_bbp} we get the following lower bound for the solution after $T$ units of time:
\begin{equation*}
    \|x_T - x^*\|^2_2 + \|y_T - y^*\|^2_2 \geq q^{2\left(\frac{T}{t + \rho\tau} + 2\right)} \frac{\| y_0 - y^*\|^2_2}{16}.
\end{equation*}
\end{lemma}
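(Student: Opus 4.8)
The plan is to combine the coordinate-support restriction from the preceding lemma with the explicit geometric description of the solution supplied by Lemma \ref{lem2}, reducing everything to a comparison of the geometric tail of $y^*$ against its total norm.

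First I would invoke the preceding lemma: after $T$ units of time every reachable iterate, and in particular the output $y_T$, is supported on the first $k = \lfloor (T-2t)/(t+\rho\tau)\rfloor + 2$ coordinates, i.e. $y_T \in \spn\{e_1,\dots,e_k\}$. Since the distance from $y^*$ to this subspace is a valid lower bound for $\|y_T - y^*\|_2$, we get
\begin{equation*}
\|x_T - x^*\|_2^2 + \|y_T - y^*\|_2^2 \geq \|y_T - y^*\|_2^2 \geq \sum_{i=k+1}^{d} (y^*_i)^2 .
\end{equation*}
It therefore suffices to lower bound the tail mass $\sum_{i=k+1}^{d}(y^*_i)^2$ and compare it to $\|y_0 - y^*\|_2^2 = \|y^*\|_2^2$, using $y_0 = 0$.

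Next I would replace $y^*$ by the explicit approximation $\bar y^*_i = q^i/(1-q)$ from Lemma \ref{lem2}, for which the tail and the total are closed-form geometric sums,
\begin{equation*}
\sum_{i=k+1}^{d}(\bar y^*_i)^2 = \frac{q^{2(k+1)}\bigl(1-q^{2(d-k)}\bigr)}{(1-q)^2(1-q^2)}, \qquad \|\bar y^*\|_2^2 = \frac{q^{2}\bigl(1-q^{2d}\bigr)}{(1-q)^2(1-q^2)},
\end{equation*}
whose ratio equals $q^{2k}\,(1-q^{2(d-k)})/(1-q^{2d})$ and is thus of order $q^{2k}$. The hypothesis $d\geq 2k$ guarantees $d-k\geq k\geq 2$, so $1-q^{2(d-k)}$ stays bounded away from zero and the tail is a genuine $q^{2k}$-fraction of the whole. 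To transfer this from $\bar y^*$ to the true $y^*$, I would apply the reverse triangle inequality to the tail and the triangle inequality to $\|y^*\|_2$, invoking $\|\bar y^* - y^*\|_2 \leq q^{d+1}/(\alpha(1-q))$ together with the second hypothesis $d\geq 2\log_q(\alpha/(4\sqrt2))$, i.e. $q^{d/2}\leq \alpha/(4\sqrt2)$, to show this error is small enough to be absorbed while degrading the constant only to $q^{2k}/16$. This yields
\begin{equation*}
\sum_{i=k+1}^{d}(y^*_i)^2 \geq \frac{q^{2k}}{16}\,\|y^*\|_2^2 = \frac{q^{2k}}{16}\,\|y_0 - y^*\|_2^2 .
\end{equation*}
Finally, since $q\in(0,1)$ and $k = \lfloor (T-2t)/(t+\rho\tau)\rfloor + 2 \leq T/(t+\rho\tau) + 2$, the exponent obeys $q^{2k}\geq q^{2(T/(t+\rho\tau)+2)}$, which delivers the claimed bound.

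The main obstacle I anticipate is the bookkeeping in the transfer step: the tail mass, the total mass, the prefactors $(1-q)$ and $(1-q^2)$, and the Lemma \ref{lem2} error term must all be balanced so the final constant comes out as exactly $1/16$. The two hypotheses on $d$ are precisely what make this work—$d\geq 2k$ keeps the tail a fixed fraction of the vector, and $d\geq 2\log_q(\alpha/(4\sqrt2))$ forces $q^{d}$, hence the approximation error, to be negligible—so the crux is verifying that these two inequalities together are enough to pass from the clean ratio $q^{2k}$ for $\bar y^*$ to the stated ratio $q^{2k}/16$ for the true $y^*$.
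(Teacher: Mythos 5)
Your proposal is correct and follows essentially the same route as the paper's proof: the support-restriction lemma reduces the distance to the geometric tail of $\bar y^*$, the two hypotheses on $d$ play exactly the roles you assign them, and the paper likewise defers the detailed bookkeeping for the constant $16$ to the reference of Zhang et al. One small repair to your tail-to-total step: the uniform bound does not come from $1-q^{2(d-k)}$ being bounded away from zero (it is not as $q\to 1$), but from the ratio $\bigl(1-q^{2(d-k)}\bigr)/\bigl(1-q^{2d}\bigr)\geq (d-k)/d \geq 1/2$, which holds because the geometric terms are decreasing and $d\geq 2k$ gives $d-k\geq d/2$ --- exactly the factor $1/\sqrt{2}$ appearing in the paper's first display.
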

\begin{proof} By Lemma \ref{lem2} with $q < 1$ and $k \leq \frac{d}{2}$, we have
\begin{eqnarray*}
    \|y_T - \bar y^*\|^2_2 &\geq& \sqrt{\sum\limits_{j=k+1}^d  (\bar y^*_j)^2} = \frac{q^k}{1-q} \sqrt{q^2 + q^4 + \ldots + q^{2(d-k)}} \\
    &\geq& \frac{q^k}{\sqrt{2}(1-q)} \sqrt{q^2 + q^4 + \ldots + q^{2d}} = \frac{q^k}{\sqrt{2}} \| \bar y^*\|^2_2 = \frac{q^k}{\sqrt{2}} \| y_0 - \bar y^*\|^2_2.
\end{eqnarray*}
For $d \geq 2 \log_q \left(\frac{\alpha}{4\sqrt{2}} \right)$ we can guarantee that $\bar y^* \approx y^*$ and also (for more details see \cite{zhang2019lower})
\begin{align*}
    \|x_T - x^*\|^2_2 + \|y_T - y^*\|^2_2 &\geq \|y_T - y^*\|^2_2 \geq \frac{q^{2k}}{16} \| y_0 - y^*\|^2_2 = q^{2\left(\left\lfloor \frac{T-2t}{t + \rho\tau} \right\rfloor + 2\right)} \frac{\| y_0 - y^*\|^2_2}{16} \\
    &\geq q^{2\left( \frac{T}{t + \rho\tau} + 2\right)} \frac{\| y_0 - y^*\|^2_2}{16}.
\end{align*}
\end{proof}

In the conditions of Lemma \ref{lem56} there is a choice of $d$.  With given $y^*$ we can also determine the size of the ball $R$. More precisely we want $y^* \in \bar \sY$, then we can choose $R = \tfrac{1}{\sqrt{2}}\| y^* - y^0\| = \tfrac{1}{\sqrt{2}}\| y^*\|$ (see \eqref{eq:R_def_for_prox}).
Now we are ready to combine the facts obtained above and prove Theorem \ref{Th:lower_bound}.

\begin{theorem}[Theorem \ref{Th:lower_bound}]
Let $L > 0$, $\chi \geq 1$, $\varepsilon > 0$.  Additionally, we assume that $L \geq  \tfrac{32\varepsilon}{R^2}$. There exists a distributed saddle point problem of $m$ (defined in the proof) functions with decentralized architecture and a gossip matrix $W$, for which the following statements are valid:
\begin{itemize}[leftmargin=*]
    \item each function $f_i : \R^d \times \R^d \to \R$ is convex-concave,
    \item $f = \tfrac{1}{m} \sum_{i=1}^m f_i : \R^d \times \R^d \to \R$ is $L$-smooth and has $y^* \neq 0$,
    \item $d \geq \max \left\{ 2 \log_q \left( \tfrac{\alpha}{4\sqrt{2}}\right), 2k\right\}$, where $k = \left\lfloor \tfrac{T-2t}{t + \tau} \right\rfloor + 2$, $\alpha = \tfrac{32\varepsilon^2}{L^2 R^4}$ and $q = \frac{1}{2}\left(2 + \alpha - \sqrt{\alpha^2 + 4\alpha} \right) \in (0;1)$,
    \item $\bar\sX \times \bar\sY$ is bounded and has a size $R = \tfrac{1}{\sqrt{2}}\| y^*\|$,
    \item the gossip matrix $W$ have $\chi(W) = \chi$.
\end{itemize}
Then for any procedure, which satisfies Assumption \ref{ass_bbp}, one can bounded the time to achieve a $\varepsilon$-solution (i.e. $\max_{y \in \bar \sY}f(x_T, y) - \min_{x \in \bar \sX} f(x, y_T) \leq \varepsilon$) in the final global output:
\begin{equation*}
    T = \Omega\left( \frac{LR^2}{\varepsilon} (t + \sqrt{\chi}\tau)\right).
\end{equation*}
\end{theorem}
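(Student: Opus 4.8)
The plan is to instantiate the three components already assembled in this appendix — the bad bilinear function \eqref{t2}, the coordinate-propagation lemma (the first lemma of this appendix), and the distance lower bound of Lemma \ref{lem56} — and then convert the resulting lower bound on the distance to the saddle point into a lower bound on the duality gap. The bad function $f$ in \eqref{t144} is $\mu$-strongly convex in $x$ and $\mu$-strongly concave in $y$ with $\mu = \tfrac{32\varepsilon}{R^2}$, coming from the quadratic terms $\tfrac{16\varepsilon}{R^2}\|x\|_2^2 - \tfrac{16\varepsilon}{R^2}\|y\|_2^2$, and it is $L$-smooth under the standing assumption $L \ge \tfrac{32\varepsilon}{R^2}$. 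The whole difficulty thus factors into a \emph{graph/communication} term $\rho$ and an \emph{accuracy} term governed by the contraction rate $q$.

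First I would fix the network. Following \cite{scaman2017optimal}, I would take a linear (path) graph on $m$ nodes with gossip matrix $W$ equal to its Laplacian; then $\chi(W) = \lambda_{\max}(W)/\lambda_{\min}^+(W) = \Theta(m^2)$, while the two end-groups $B_\rho$ and $B$ sit at graph distance $\rho = \Theta(m) = \Theta(\sqrt{\chi})$. Choosing $m$ and rescaling $W$ realizes exactly $\chi(W) = \chi$, so $\rho = \Theta(\sqrt{\chi})$ with $B_\rho \ne \varnothing$. By the block structure \eqref{t56}, each additional nonzero coordinate must be propagated from $B_\rho$ to $B$, which costs $\rho$ communication rounds, giving the effective per-coordinate cost $t + \rho\tau = \Theta(t + \sqrt{\chi}\,\tau)$ appearing in Lemma \ref{lem56}.

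Next I would turn the squared-distance bound of Lemma \ref{lem56} into a gap bound. By strong convexity–concavity at the saddle point, $\max_{y\in\bar\sY} f(x_T, y) - \min_{x\in\bar\sX} f(x, y_T) \ge \tfrac{\mu}{2}\big(\|x_T - x^*\|_2^2 + \|y_T - y^*\|_2^2\big)$, so a gap of at most $\varepsilon$ forces $\|x_T - x^*\|_2^2 + \|y_T - y^*\|_2^2 \le \tfrac{2\varepsilon}{\mu} = \tfrac{R^2}{16}$. Combining with Lemma \ref{lem56} and the normalization $\|y_0 - y^*\|_2^2 = \|y^*\|_2^2 = 2R^2$ (since $y_0 = 0$ and $R = \tfrac{1}{\sqrt2}\|y^*\|_2$) yields $q^{2(T/(t+\rho\tau)+2)} \le \tfrac12$, i.e. $T/(t+\rho\tau) \ge \tfrac{\ln 2}{2|\ln q|} - 2$. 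It then remains to estimate $q = \tfrac12\big(2 + \alpha - \sqrt{\alpha^2 + 4\alpha}\big)$ with $\alpha = \tfrac{32\varepsilon^2}{L^2 R^4}$: for small $\alpha$ one has $q = 1 - \sqrt{\alpha} + O(\alpha)$, hence $|\ln q| = \Theta(\sqrt{\alpha}) = \Theta(\varepsilon/(LR^2))$ and $1/|\ln q| = \Omega(LR^2/\varepsilon)$. Substituting gives $T = \Omega\big(\tfrac{LR^2}{\varepsilon}(t + \sqrt{\chi}\,\tau)\big)$, as claimed.

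The hard part will be the bookkeeping tying the three scales together within a single admissible regime: one must pick the dimension $d \ge \max\{2\log_q(\alpha/(4\sqrt2)), 2k\}$ large enough that $\bar y^* \approx y^*$ (so Lemma \ref{lem2} applies) and that the target coordinate index $k$ still lies in the range forced to zero by the coordinate-propagation lemma, while simultaneously keeping $\alpha$ small enough for the asymptotic $q \approx 1 - \sqrt{\alpha}$ and retaining $L \ge \tfrac{32\varepsilon}{R^2}$ so that $f$ is genuinely $L$-smooth. The graph-theoretic step — realizing $\rho = \Theta(\sqrt{\chi})$ for a gossip matrix of condition number exactly $\chi$ — is conceptually the crux but is standard after \cite{scaman2017optimal}; the delicate point is verifying that all of these constraints are mutually compatible for the prescribed $L$, $\chi$, and $\varepsilon$.
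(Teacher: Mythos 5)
Your proposal is correct and follows essentially the same route as the paper's own proof: the same hard instance \eqref{t2} with the coordinate-propagation lemma and Lemma \ref{lem56}, the same $\tfrac{32\varepsilon}{R^2}$-strong-convexity--concavity conversion of the squared-distance bound into a duality-gap bound using $\|y_0 - y^*\|_2^2 = 2R^2$, and the same final estimate $1/\ln(q^{-1}) \ge q/(1-q) = \Omega(LR^2/\varepsilon)$, which the paper computes exactly as $\sqrt{\tfrac14 + \tfrac1\alpha} - \tfrac12$ rather than via your asymptotic $q \approx 1 - \sqrt{\alpha}$ (both valid, since $L \ge \tfrac{32\varepsilon}{R^2}$ keeps $\alpha \le \tfrac{1}{32}$). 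One detail to correct: rescaling $W$ by a scalar leaves $\chi(W)$ invariant and so cannot realize $\chi(W)=\chi$ exactly; the paper, following \cite{scaman2017optimal}, instead varies a single edge weight $w_{1,2}=1-a$ on the path graph (and uses a three-node weighted graph when $m=2$) to interpolate continuously between $\chi_m$ and $\chi_{m+1}$, which still gives $\rho \ge \sqrt{2\chi}-2$.
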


\begin{proof}
We start the proof from Lemma \ref{lem56} and get:
\begin{equation*}
    \|x_T - x^*\|^2 + \|y_T - y^*\|^2 \geq q^{2\left( \frac{T}{t + \rho\tau} + 2\right)} \frac{\| y_0 - y^*\|^2}{16}.
\end{equation*}
Using $\tfrac{32\varepsilon}{R^2}$ - strong convexity -- strong concavity of \eqref{t144}, we can obtain
\begin{align*}
    \max_{y \in \bar \sY} f(x^T, y) - \min_{x \in \bar \sX} f(x, y^T) &\geq 
    f(x_T, y^*) - f(x^*, y^*) + f(x^*, y^*) - f(x^*, y_T) \\
    &\geq \frac{16\varepsilon}{R^2}\|x_T - x^*\|^2 + \frac{16\varepsilon}{R^2}\|y_T - y^*\|^2,
\end{align*}
and 
\begin{equation*}
    \max_{y \in \bar \sY} f(x^T, y) - \min_{x \in \bar \sX} f(x, y^T) \geq q^{2\left( \frac{T}{t + \rho\tau} + 2\right)} \cdot \frac{\varepsilon}{R^2} \| y_0 - y^*\|^2\geq q^{2\left( \frac{T}{t + \rho\tau} + 2\right)} \cdot 2\varepsilon.
\end{equation*}
If $\max_{y \in \bar \sY}f(x^T, y) - \min_{x \in \bar \sX} f(x, y^T) \leq \varepsilon$, we get
\begin{equation}
    \label{eq:temp808}
    \frac{2T}{t + \rho\tau} = \Omega\left( \log_{q^{-1}}(2)\right).
\end{equation}
The next steps of the proof follows similar way with the  proof of  \cite[Theorem 2]{scaman2017optimal}. 
Let $\chi_m = \tfrac{1 + \cos \tfrac{\pi}{m}}{1 - \cos \tfrac{\pi}{m}}$ be an  increasing sequence of positive numbers (this sequence is also the condition numbers for the Laplacian of a linear graph of $m$ vertexes). Since $\gamma_2 = 1$ and $\lim_m \chi_m = +\infty$, there exists $m \geq 2$ such that $\chi_m \leq \chi < \chi_{m+1}$.

$\bullet$ If $m \geq 3$, let us consider a linear graph with $m$ vertexes $v_1, \ldots v_m$ and with weighted edges $w_{1,2} = 1 - a$ and  $w_{i,i+1} = 1$ for $i \geq 2$. If $W_a$ is the Laplacian of this weighted graph, one can note that with $a = 0$, $\chi(W_a) = \gamma_m$, with $a = 1$, we have $\chi(W_a) = +\infty$. Hence, there exists $a \in [0;1)$ such that $\chi(W_a) = \chi$. Then $\chi < \chi_{m+1} \leq \tfrac{(m+1)^2}{2}$. Finally, if $B = \{v_1\}$, $\bar B = \{v_m\}$, $\rho = m - 1$ and then, $\rho \geq \sqrt{2\chi} - 2$. Using \eqref{eq:temp808}, we have
\begin{equation*}
    T = \Omega\left( (t + \sqrt{\chi}\tau)\frac{1}{\ln q^{-1}}\right).
\end{equation*}
$\bullet$ If $M = 2$, we construct a fully connected network with 3 nodes and a weight $w_{1,3} = a \in [0;1]$. Let $W_a$ is the Laplacian. If $a = 0$, then the network is a linear graph and $\chi(W_a) = \chi_3 =3$. Hence, there exists $a \in [0;1)$ such that $\chi(W_a) = \chi$. We take $B = \{v_1\}$, $\bar B = \{v_3\}$ and get $\rho \geq 1 \geq \tfrac{\chi}{3}$. Finally, we have  the same estimate as in the previous point
\begin{equation*}
    T = \Omega\left( (t + \sqrt{\chi}\tau)\frac{1}{\ln q^{-1}}\right).
\end{equation*}
Next, we work with
\begin{eqnarray*}
    \frac{1}{\ln (q^{-1})} &=& \frac{1}{\ln (1+ (1-q)/q))} \geq \frac{q}{1-q} \\
    &=&
    \frac{1 + \frac{\alpha}{2} - \sqrt{\frac{\alpha^2}{4} + \alpha}}{\sqrt{\frac{\alpha^2}{4} + \alpha} - \frac{\alpha}{2}} = \frac{\sqrt{\frac{\alpha^2}{4} + \alpha} - \frac{\alpha}{2}}{\alpha} \\
    &=& \sqrt{\frac{1}{4} + \frac{1}{\alpha}} - \frac{1}{2} 
    = \sqrt{\frac{1}{4} + \frac{1}{32}\left(\frac{L R^2}{\varepsilon}\right)^2} - \frac{1}{2}.
\end{eqnarray*}
Hence, we get
\begin{equation*}
    T = \Omega\left( \frac{L R^2}{\varepsilon} (t + \sqrt{\chi}\tau)\right).
\end{equation*}
\end{proof}

\section{Missing proofs from Section \ref{sec:wb}}\label{app:wb}




\begin{proof}[Proof of Lemma \eqref{lemma:constrained_lagrange_multipiers_wb}]
\add{From Lemma \ref{lemma:constrained_lagrange_multipliers} it holds
\begin{equation}\label{eq:newR}
    R_\sZ^2 = \frac{{2m} M_x^2}{(\lambda_{\min}^+(W))^2}
\end{equation}
where positive scalar $M_x$ is such that: 
$\norm{\nabla_x f_i(x, p_i, q_i)}_2\leq M_x$ and $f_i(x, p_i, q_i)$ is defined in \eqref{eq:def_saddle_prob}. Next we calculate its gradient
\[
    \nabla_x  f_i(x, p_i, q_i) = - 2\|d\|_\infty  [q_{i}]_{1...n},
\]
where $[q_{i}]_{1...n}$ is the first $n$ component of vector $q_i\in[-1,1]^{2n}$.
As $q_i\in [-1,1]^{2n}$ we have
\begin{equation}\label{eq:fffdlldddj}
    \|\nabla  f_i(x, p_i, q_i) \|_2 = 2\|d\|_\infty \| [q_{i}]_{1...n}\|_2 \leq  2\|d\|_\infty  \sqrt n = M_x.
\end{equation}
Thus, using this and the definition of $d$ which provides $\|d\|_\infty = \max_{i,j}C_{ij}$ for \eqref{eq:newR} we get 
\[R_\sZ^2 = \frac{{8  mn \max_{i,j}C_{ij}^2} }{(\lambda_{\min}^+(W))^2}.\]
}
\end{proof}


\begin{proof}[Proof of Lemma \eqref{lm:Lipsch1}]
As $S(\u,\v)$ from \eqref{eq:saddle_point} is bilinear, $L_{\u\u}=L_{\v\v}=0$. Then from Lemma 
\ref{lemma:smothness_constants} it follows
    that $S(\bu,\bv)$  is $(0, L_{\u\v}, L_{\v\u}, 0)$-smooth, where
    \begin{align}\label{eq:constLbu}
        L_{\bu\bv} &= \sqrt{2}\cbraces{L_{(\bx,\bp)\bq} \sqrt{(R_\sX^2 + R_\sP^2)R_\sQ^2} + R_\sX R_\sZ\norm{\W}_{2\to(\bx,*)}}, \notag \\
         L_{\bv\bu} &= \sqrt{2}\cbraces{L_{\bq(\bx,\bp)} \sqrt{(R_\sX^2 + R_\sP^2) R_\sQ^2} + R_\sX R_\sZ\norm{\W}_{\bx\to2}}.
        \end{align}
  As $x_i \in \Delta_n$ and $p_i \in \Delta_{n^2}$, we use the simplex setup: $\ell_1$-norm and  the following prox-functions on $\Delta_n$ and $\Delta_{n^2}$
  \begin{equation}\label{eq:prox_xp}
      d_{\Delta_n}(x_i)  =  \la x_i,\log x_i \ra  \quad \text{ and } \quad d_{\Delta_{n^2}}(p_i) =  \la p_i,\log p_i \ra.
  \end{equation}
  For $\X \triangleq \prod^m \Delta_{n}$ and $\Pp  \triangleq \prod^m \Delta_{n^2}$, we choose the following $\ell_1/\ell_2$-norms
  \begin{equation}\label{eq:normsXPp}
        \|\x\|^2_{\X} = \sum_{i=1}^m\|x_i\|^2_{1} \quad \text{ and } \quad  \|\p\|^2_{\Pp} = \sum_{i=1}^m\|p_i\|^2_{1}.
  \end{equation}
Then from \eqref{eq:R_def_for_prox} we have
\begin{align*}
    &R^2_{\Delta_n} = \max\limits_{x_i \in  {\Delta_{n}} }d_{{\Delta_{n}}}(x_i) - \min\limits_{x_i \in  {\Delta_{n}} }d_{\Delta_{n}}(x_i) = \ln n, \\
     &R^2_{\Delta_{n^2}} = \max\limits_{p_i \in {\Delta_{n^2}}}d_{\Delta_{n^2}}(p_i) - \min\limits_{p_i \in {\Delta_{n^2}} }d_{\Delta_{n^2}}(p_i) = 2  \ln n.
\end{align*}
Then from this and definitions of  $\X \triangleq \prod^m \Delta_{n}$ and $\Pp  \triangleq \prod^m \Delta_{n^2}$, it follows
\begin{equation}\label{eq:rxrx}
R^2_{\X}  = \sum_{i=1}^m R^2_{\Delta_n} = m\ln n, \quad \text{ and } \quad R^2_{\Pp}  = \sum_{i=1}^m R^2_{\Delta_{n^2}} = 2m\ln n.
\end{equation}
  Using this and \eqref{eq:prox_xp}, we get \begin{align}\label{eq:prox-func_me}
       d_{\bu}(\bu) =
\sum_{i=1}^m\left(\frac{d_{\Delta_n}(x_i)}{R_\sX^2} + \frac{d_{\Delta_{n^2}}(p_i)}{R_\sP^2} \right) = \frac{1}{m \ln n} \sum_{i=1}^m\left(  \la x_i, \ln x_i\ra + \frac{1}{2}\la p_i, \ln p_i \ra\right) 
  \end{align}
  
From  \eqref{eq:normsUV} and \eqref{eq:normsXPp}, it holds
\begin{align*}
    \|\u\|^2_{\U} = \frac{\norm{\bx}_{\sX}^2}{R_\sX^2} + \frac{\norm{\bp}_{\sP}^2}{R_\sP^2} = \frac{1}{m \ln n} \sum_{i=1}^m\left(  \|x_i\|^2_1 + \frac{1}{2}\|p_i\|^2_1\right) 
\end{align*}

Now as $\sA_\sV$ is unbounded, we define $\sV = \sQ\times \{ \z :~ \|\z\|_2 \leq R_{\Z} \}$. On $\sV$ we define the Euclidean prox-setup. 
Let $ \sQ_i \triangleq [-1,1]^{2n} $. Then for $ q_i\in \sQ_i$ we define 
\begin{equation}\label{eq:proxqqfunc}
    d_{\sQ_i}(q_i) = \frac{1}{2}\|q_i\|_2^2.
\end{equation}
Then 
\begin{equation}\label{eq:rqrq}
    R^2_\sQ = \sum_{i=1}^m R^2_{\sQ_i} = \frac{1}{2}\sum_{i=1}^m \max\limits_{q_i \in  \sQ_i }\|q_i\|_2 - \min\limits_{q \in  \sQ_i }\|q_i\|_2 = m n
\end{equation}
Then using this, \eqref{eq:proxqqfunc} and Lemma \eqref{lemma:constrained_lagrange_multipiers_wb}, we have
  \begin{align}\label{eq:prox-func_v_me}
      d_{\bv}(\bv) =\sum_{i=1}^m
 \frac{d_{\sQ_i}(q_i)}{R_\sQ^2} + \frac{d_{\Z_i}(z_i)}{R_\sZ^2} =  \frac{1}{2mn}\left( \|\q\|^2_2 + \frac{(\lambda_{\min}^+(W))^2\|\z\|^2_{2}}{8 \max_{i,j}C_{ij}^2 } \right).
  \end{align}  
From  \eqref{eq:normsUV} we have 
\begin{align*}
    \|\v\|^2_{\V} = \frac{\norm{\bq}_{\sQ}^2}{R_\sQ^2} + \frac{\norm{\bz}_{2}^2}{R_\sZ^2}= \frac{1}{2mn}\left( \|\q\|^2_2 + \frac{(\lambda_{\min}^+(W))^2\|\z\|^2_{2}}{8 \max_{i,j}C_{ij}^2 } \right).
\end{align*}
Thus, we use \eqref{eq:rxrx}, \eqref{eq:rqrq}  and rewrite Eq. \eqref{eq:constLbu} 
\begin{align}\label{eq:newLvu}
L_{\bu\bv} &= m\sqrt{n\ln n}\cbraces{\sqrt{6} L_{(\bx,\bp)\bq}  + \frac{4\|d\|_\infty }{\lm_{\min}(W)}\norm{\W}_{2\to(\bx,*)}}, \notag \\
 L_{\bv\bu} &= m\sqrt{n\ln n}\cbraces{\sqrt{6}L_{\bq(\bx,\bp)}  + \frac{4\|d\|_\infty }{\lm_{\min}(W)}\norm{\W}_{\bx\to2}}.
\end{align}
 Now we consider 
 \begin{align}\label{eq:W calcnorm}
  \norm{\W}^2_{\bx\to2} = 
   \max_{\|\x\|^2_\X \leq 1}  \left\|\W\x \right\|_2^2 
   =  \max_{\sum_{i=1}^m\|x_i\|^2_1 \leq 1}  \left\|\W\x \right\|_2^2.
 \end{align}
 The set $\sum_{i=1}^m\|x_i\|^2_1 \leq 1$ is contained in the set $\sum_{j=1}^n\sum_{i=1}^m[x_i]^2_j \leq 1$  as cros-product terms of $\|x_i\|^2_1$ are non-negative. Thus, we can change the constraint in the minimum in Eq. \eqref{eq:W calcnorm} as follows
  \begin{align}\label{eq:W calcnormfin} 
  \max_{\sum_{i=1}^m\|x_i\|^2_1 \leq 1}  \left\|\W\x \right\|_2^2 
  &\leq   \max_{\sum_{j=1}^n\sum_{i=1}^m[x_i]^2_j \leq 1}  \left\|\W\x \right\|_2^2 =\max_{ \|\x\|^2_2 \leq 1}  \left\|\W\x \right\|_2^2 \notag \\ 
  &= \max_{ \|\x\|_2 \leq 1}  \left\|\W\x \right\|_2^2 \triangleq \lm^2_{\max}(\W) =\lm^2_{\max}(W).
  \end{align}
 The last inequality holds due to $ \W \triangleq W \otimes  I_n$ and the properties of the Kronecker product for eigenvalues. Thus, $ \norm{\W}_{\bx\to2} = \lm^2_{\max}(W)$.\\
Next, we consider 
\begin{align}\label{eq:W2bxto}
    \norm{\W}^2_{\bx\to2} &=    \max_{\|\x\|^2_2 \leq 1}  \left\|\W\x \right\|_{\X,*}^2 =    \max_{\|\x\|^2_2 \leq 1} \sum_{i=1}^m \left\|Wx \right\|^2_\infty \leq \max_{\|\x\|^2_2 \leq 1} \sum_{i=1}^m \left\|Wx \right\|^2_2 \notag \\
    &=  \max_{\|\x\|^2_2 \leq 1}  \left\|\W\x \right\|^2_2 = \lm_{\max}^2(W), 
\end{align}
where the last equality holds due to \eqref{eq:W calcnormfin}.
Next, we estimate $L_{(\bx,\bp)\bq}$ and $L_{\bq(\bx,\bp)}$  by the definition 
\begin{equation}\label{eq:Ldef_conj}
  \left\|\begin{bmatrix*}[l]
        ~~\nabla_\bx F(\bx, \bp, \bq) - \nabla_\bx F(\bx, \bp, \bq') \\
        ~~\nabla_\bp F(\bx, \bp, \bq) - \nabla_\bp F(\bx, \bp,  \bq')
    \end{bmatrix*}\right\|_{(\X,\Pp),*} \leq L_{(\bx,\bp)\bq}  \|
 \bq-\bq' \|_2  
\end{equation}
 From the definition of dual norm and  $\norm{\cbraces{\bx,\bp}}_{(\sX,\sP)}^2 = \norm{\bx}_{\X}^2 + \norm{\bp}_{\Pp}^2$, it follows
 \begin{align*}
     &\left\|\begin{bmatrix*}[l]
        ~~\nabla_\bx F(\bx, \bp, \bq) - \nabla_\bx F(\bx, \bp, \bq') \\
        ~~\nabla_\bp F(\bx, \bp, \bq) - \nabla_\bp F(\bx, \bp,  \bq')
    \end{bmatrix*}\right\|_{(\X,\Pp),*} \\
    &= \max_{\norm{\bx}_{\sX}^2 + \norm{\bp}_{\sP}^2 \leq 1}\left\la \begin{bmatrix}
\x\\
\p
\end{bmatrix}, \begin{bmatrix*}[l]
        ~~\nabla_\bx F(\bx, \bp, \bq) - \nabla_\bx F(\bx, \bp, \bq') \\
        ~~\nabla_\bp F(\bx, \bp, \bq) - \nabla_\bp F(\bx, \bp,  \bq')
    \end{bmatrix*}\right\ra.
 \end{align*}
  From this and \eqref{eq:Ldef_conj} we get
    \begin{align}\label{eq:Lreform}
    \max_{\norm{\bx}_{1}^2 + \norm{1}_{\sP}^2 \leq 1}\left\la \begin{bmatrix}
\x\\
\p
\end{bmatrix} , \begin{bmatrix*}[l]
        ~~\nabla_\bx F(\bx, \bp, \bq) - \nabla_\bx F(\bx, \bp, \bq') \\
        ~~\nabla_\bp F(\bx, \bp, \bq) - \nabla_\bp F(\bx, \bp,  \bq')
    \end{bmatrix*}\right\ra \leq L_{(\bx,\bp)\bq}  \|
 \bq-\bq' \|_2.  
    \end{align}
By the definition of $F(\bx, \bp,  \bq)$  we have

\begin{align*}
    \begin{bmatrix}
  \nabla_\x F (\bx, \bp,  \bq)\\
 \nabla_\p F(\bx, \bp,  \bq)\\
 -\nabla_\q F(\bx, \bp,  \bq) 
    \end{bmatrix} = 
  \begin{bmatrix}
   -   2\|d\|_\infty \{[q_{i}]_{1...n}\}_{i=1}^m \\
  \boldsymbol d + 2\|d\|_\infty \boldsymbol A^\top \q \\
    -2\|d\|_\infty(\A\p - \b)   
    \end{bmatrix}\end{align*}
Here $\{[q_{i}]_{1...n}\}_{i=1}^m$ is a short form of vector $([q_{1}]_{1...n},[q_{2}]_{1...n},..., [q_{m}]_{1...n})$.\\
From this it follows that $\begin{bmatrix}
  \nabla_\x F(\bx, \bp,  \bq) \\
  \nabla_\p F (\bx, \bp,  \bq)
    \end{bmatrix}$ is linear function in $\q - \q'$, then \eqref{eq:Lreform}  can be rewritten as
\begin{align}\label{eq:Lcong2nor}
L_{(\bx,\bp)\bq}   &= \max_{ \|
 \bq-\bq' \|_2 \leq 1}  ~ \max_{\norm{\bx}_{\sX}^2 + \norm{\bp}_{\sP}^2 \leq 1}\left\la \begin{bmatrix}
\x\\
\p
\end{bmatrix}, \begin{bmatrix*}[l]
        ~~\nabla_\bx F(\bx, \bp, \bq) - \nabla_\bx F(\bx, \bp, \bq') \\
        ~~\nabla_\bp F(\bx, \bp, \bq) - \nabla_\bp F(\bx, \bp,  \bq')
    \end{bmatrix*}\right\ra.          
    \end{align} 
Then 
\begin{align*}
    \begin{bmatrix*}[l]
        ~~\nabla_\bx F(\bx, \bp, \bq) - \nabla_\bx F(\bx, \bp, \bq') \\
        ~~\nabla_\bp F(\bx, \bp, \bq) - \nabla_\bp F(\bx, \bp,  \bq')
    \end{bmatrix*} 
    &=  \begin{bmatrix*}[l]
  -    2\|d\|_\infty (\{[q_{i} - q'_{i}]_{1...n}\}_{i=1}^m) \\
  2\|d\|_\infty\boldsymbol A^ \top ( \q - \q') 
    \end{bmatrix*}  \\
        &= 2\|d\|_\infty 
    \begin{pmatrix}
-    \mathcal E  \\
 \boldsymbol A^\top
    \end{pmatrix}
    \begin{pmatrix}
        \q - \q'  
    \end{pmatrix}, 
    \end{align*}
    where $\mathcal E \in \{1,0\}^{mn\times 2mn}$ is a block-diagonal matrix
    \[
    \mathcal E = 
\begin{bmatrix}
\begin{bmatrix}
       I_n  ~
         0_{n\times n} 
\end{bmatrix} &   \cdots &  0_{2n\times n}\\
\vdots  & \ddots  & \vdots  \\
0_{2n\times n} &  \cdots &  \begin{bmatrix}
       I_n ~
         0_{n\times n} 
\end{bmatrix}
\end{bmatrix}.
\]
Thus, we use this for \eqref{eq:Lcong2nor} and get
\begin{align}\label{eq:Lcong2nor22}
L_{(\bx,\bp)\bq}   &= \max_{\norm{\bx}_{\sX}^2 + \norm{\bp}_{\sP}^2 \leq 1} \max_{\norm{\bx}_{1}^2 + \norm{1}_{\sP}^2 \leq 1} \left\la \begin{bmatrix}
\x\\
\p
\end{bmatrix},  ~2\|d\|_\infty 
    \begin{bmatrix}
-    \mathcal E  \\
 \boldsymbol A^\top
    \end{bmatrix}
    \begin{bmatrix}
        \q - \q'  
    \end{bmatrix}\right \ra.          
    \end{align}
By the same arguments we can get the same expression for $L_{\bq(\bx,\bp)}$ up to rearrangement of maximums.
Next, we use the fact that the $\ell_2$-norm is the conjugate norm for the $\ell_2$-norm. From this and  \eqref{eq:Lcong2nor} it follows
\begin{align}\label{eq:Lxy}
L_{(\bx,\bp)\bq} &=  \max_{\norm{\bx}_{\sX}^2 + \norm{\bp}_{\sP}^2 \leq 1}   2\|d\|_\infty \left\| \begin{bmatrix}
 -\mathcal E^\top  & \boldsymbol A  
    \end{bmatrix}  \begin{bmatrix}
        \x\\
        \p
    \end{bmatrix}\right\|_2 \notag \\
    & = \max_{\norm{\bx}_{\sX}^2 + \norm{\bp}_{\sP}^2 \leq 1}   2\|d\|_\infty \left\| -\mathcal E^\top \x  + \A \p\right\|_2
\end{align} 
Then
\begin{align}\label{eq:Axnorm}
\|- \mathcal E^\top \x + \boldsymbol A \p \|_2^2
&= \sum_{i=1}^{m}\left\|    -  \begin{pmatrix}
x_i\\
0_n
\end{pmatrix} + Ap_i\right\|^2_2 \leq   \sum_{i=1}^{m}\|x_i\|_2^2 +\sum_{i=1}^{m}\|   Ap_i\|_2^2.
\end{align}
The last bound holds due to $\la A p_i, (
x_i^\top,
0_n^\top) \ra \geq 0$ as the entries of $A,\p,\x$ are non-negative.\\
Next we use this for  \eqref{eq:Lxy}
\begin{align}\label{eq:normest}
L_{(\bx,\bp)\bq}^2 &= 4\|d\|^2_\infty \max_{\norm{\bx}_{\sX}^2 + \norm{\bp}_{\sP}^2 \leq 1}
\|\boldsymbol A \p - \mathcal E \x \|_2^2 
= \max_{\norm{\bx}_{\sX}^2 + \norm{\bp}_{\sP}^2 \leq 1}\|\boldsymbol A \p - \mathcal E \x \|_2^2  \notag\\
&\stackrel{\eqref{eq:Axnorm}}{\leq}4\|d\|^2_\infty\max_{\alpha \in \Delta_{2m}} \left( \sum_{i=1}^{m} \max_{\|p_i\|_1^2 \leq {\alpha_{i}}} \|   Ap_i\|_2^2 + \sum_{i=1}^{m} \max_{\|x_i\|_1^2 \leq {\alpha_{i+m}}} \|x_i\|_2^2\right) \notag\\
&=4\|d\|^2_\infty\max_{\alpha \in \Delta_{2m}} \left( \sum_{i=1}^{m} \alpha_{i}  \max_{ \|p_i\|_1 \leq 1}   \|   Ap_i\|_2^2 +\sum_{i=1}^{m}\alpha_{i+m} 
\max_{\|x_i\|_1\leq 1} \|x_i\|_2^2\right).
\end{align}
By the definition of incidence matrix $A$ we get
$
Ap_i = (h_1^\top, h_2^\top)
$, where $h_1$ and $h_2$ such that $\boldsymbol 1^\top h_1 = \boldsymbol 1^\top h_2= \sum_{j=1}^{n^2} [p_i]_j$ = 1 as $p_i \in \Delta_{n^2}  ~\forall i =1,...,m$.
Thus,
\begin{equation}\label{eq:Ax22}
  \|A p_i\|_2^2 = \|h_1\|_2^2 + \|h_2\|_2^2 \leq \|h_1\|_1^2 + \|h_2\|_1^2 = 2.
  \end{equation}
As $x_i \in \Delta_n, \forall i =1, ...,m$ we have
\begin{equation}\label{eq:p_estim}
    \max_{\|x_i\|_1\leq 1} \|x_i\|_2^2 \leq \max_{\|x_i\|_1\leq 1} \|x_i\|_1^2 = 1.
\end{equation}

Using    \eqref{eq:Ax22} and  \eqref{eq:p_estim} in \eqref{eq:normest} we get
\begin{align}\label{eq:Adefnormfind}
L_{(\bx,\bp)\bq}^2 &= 4\|d\|^2_\infty\max_{\|\p\|^2_1 + \|\x\|^2_1 \leq 1}
\|\boldsymbol A \p - \mathcal E \x \|_2^2 \notag \\
&\leq
4\|d\|^2_\infty\max_{\alpha \in \Delta_{2m}} m\ln n\left( 2\sum_{i=1}^{m} \alpha_{i}  
 + \sum_{i=m+1}^{2m} \alpha_{i}  \right) \leq \max_{\alpha \in \Delta_{2m}}  2 \sum_{i=1}^{2m} \alpha_{i} = 8\|d\|^2_\infty.
\end{align}
Hence,
\begin{equation}\label{eq:finLfin}
    L_{(\bx,\bp)\bq} =    L_{\bq(\bx,\bp)}=2\sqrt 2\|d\|_\infty.
\end{equation}
Using this, \eqref{eq:W calcnormfin} and \eqref{eq:W2bxto}    for \eqref{eq:newLvu} 
and 
we get 
\begin{align}\label{eq:newLvunew}
L_{\bu\bv} = L_{\bv\bu} =      L_{\bv\bu} &= 4m \sqrt{n\ln n}\|d\|_\infty\cbraces{\sqrt{3}  +  \chi  }.
\end{align}
Then we use that for any $a,b$, 
$\sqrt{a^2+b^2}\leq \sqrt 2 \max\{a,b\}$ and rewrite this as follows
\[L_{\bv\bu} = 4m \sqrt{n\ln n} \|d\|_\infty\cbraces{\sqrt{3}  + \chi  }\leq 4m \sqrt{2n\ln n} \|d\|_\infty\max\{\sqrt{3}, \chi \}  \leq 8m \sqrt{2n\ln n}\|d\|_\infty \chi.  \]
The last holds due to $\chi \geq1$ for any graph. Finally, we use  $\|d\|_\infty = \max_{i,j}C_{ij}$ and finish the proof.

\end{proof}

\subsection{Proof of Theorem \ref{Th:first_alg11}}\label{app:Th_WB}

\begin{proof}[Proof of Theorem \ref{Th:first_alg11}]

\add{
From the Theorem \ref{Th:distr_MP},
 it holds
\begin{align}\label{eq:boundWbbar}
    &\max_{ \bq\in\sQ} \sum_{i=1}^m f_i(\hat x_{av}^N, \hat p_i^N, q_i) - \min_{\substack{x\in\bar\sX,\\ \bp\in\sP}}  \sum_{i=1}^m f_i(x, p_i, \hat q_i^N)  
\leq \frac{(4 + 17\sqrt{2})L_\zeta}{N}
\end{align}
where from \eqref{eq:def_saddle_prob}
\begin{equation}\label{eq:funcfuncf}
    f_i(x_i,p_i,q_i) \triangleq d^\top p_i +2\|d\|_\infty\left(q_i^\top Ap_i -b_i^\top q_i\right)\end{equation}   and
\begin{align}\label{eq:consW}
    \norm{\mW\hat\bx^N}_2\leq \frac{17L_\zeta}{NR_\sZ}.
\end{align}
By the definition of optimal transport 
\begin{align}\label{eq:baryWBc}
    \mathcal W(x_i,y_i) &= \min_{\pi \in  U(x_i,y_i)} \langle C, \pi \rangle = \min_{Ap_i= b_i, p_i \in \Delta_n} d^\top p_i 
    = \min_{p_i \in \Delta_n} d^\top p_i +2\|d\|_\infty \| Ap_i -b_i\|_1, \notag\\
    &= \min_{p_i \in \Delta_n}\max_{q_i \in   [-1,1]^{2n}} d^\top p_i +2\|d\|_\infty\left(q_i^\top Ap_i -b_i^\top q_i\right), 
\end{align}
where we used the representation of optimal transport from paper \cite{jambulapati2019direct} based on the definition of the $\ell_1$-norm. \\
 Together with   \eqref{eq:baryWBc} and \eqref{eq:funcfuncf}, we rewrite \eqref{eq:boundWbbar} 
as follows
\[ \sum_{i=1}^m \mathcal  W(\hat x^N_{av}, y_i) -  \min_{x \in \Delta_n} \sum_{i=1}^m \mathcal  W(x, y_i) 
\leq \frac{(4 + 17\sqrt{2})L_\zeta}{N}\]
 Equating this to $m\e$ we get the number of iteration for the Algorithm \ref{app:algWB}
 \begin{equation}\label{eq:NNN}
     N = \frac{(4 + 17\sqrt{2})L_\zeta}{m\e}.
 \end{equation}
 Thus, the initial SPP scaled by $1/m$ will be solved with $\e$-precision.\\
 Next, we use Lemma \ref{lemma:constrained_lagrange_multipiers_wb} for \eqref{eq:consW} and get
 \begin{align}\label{eq:consW2}
    \norm{\mW\hat\bx^N}_2\leq \frac{17L_\zeta}{N} \cdot \frac{\lambda_{\min}^+(W)}{2\sqrt{2mn}\max_{i,j}C_{ij}}.
\end{align}
Then we use \eqref{eq:NNN} and get 
 \begin{align}\label{eq:consW3}
   \norm{\mW\hat\bx^N}_2\leq  \frac{17\sqrt{m}  \lambda_{\min}^+(W) \e}{2(4+17\sqrt 2)\sqrt{2n}\max_{i,j}C_{ij}}.
\end{align}
The complexity of one iteration of Algorithm \ref{MP_WB_distr}  per node is $O\left(n^2\right)$ as the number of non-zero elements in matrix A is $2n^2$. Multiplying this by the number of iterations $ N={(4 + 17\sqrt{2})L_\zeta}/{(m\e)}$  we get 
\begin{align*}
     O(n^2N) &=O\left(n^2 \frac{L_{\zeta}}{m\e} \right) 
     =O\left(\frac{n^2}{\e}\sqrt{n\log n} \max_{i,j}C_{ij}  \chi
    \right),
\end{align*}
where we used the definition of $L_{\zeta}$ from \eqref{eq:def_L_zetaWB}.
}

\end{proof}

\section{Algorithm and experimental data } \label{app:algWB}
\subsection{Decentralized Mirror-Prox for Wasserstein Barycenters}
To present the algorithm we introduce soft-max function for an $x\in \R^n$
\[
{\rm Softmax}(x) =\frac{ \exp(x)}{\sum_{i=1}^n(\exp(x))}
\]
    
\begin{algorithm}[ht!]
    \caption{Decentralized Mirror-Prox for Wasserstein Barycenters}
    \label{MP_WB_distr}
    \begin{algorithmic}[1]
\REQUIRE measures $y_1,...,y_m$, linearized cost matrix $d$, incidence matrix $A$, communication matrix $W$, number of iterations $N$, stepsize $\alpha  = 1/ \cbraces{16m\sqrt{2n\ln n}\max_{i,j}C_{ij} \chi}$ 
\STATE Initialize starting points $x^1=\frac{1}{n}\boldsymbol 1_{n}$, $p_1^1=...= p_m^1 = \frac{1}{n^2}\boldsymbol 1_{n^2}$, $q_1^1 = ... =q_m^1 =\boldsymbol 0_{2n}$
        \STATE Initialize $\eta = \alpha m n \max_{i,j}C_{ij}^3 / \lm_{\min}^2(W)$,
         $\beta =6 \alpha m \log n $,
          $\theta = 2\alpha m n \max_{i,j}C_{ij}^2 / \lm_{\min}^2(W)$
        \FOR{ $k=1,2,\cdots,N-1$ }
         \STATE   Each node $i=1,..,m$ computes 
         \begin{align*}
             u^{k+1}_i &=
   {\rm Softmax}\left(\log(p^{k}_i) 
       - \beta \left( d+ 2\max_{i,j}C_{ij} A^\top q^{k}_i \right)
        \right) \\
        s_i^{k+1} &= 
       {\rm Softmax}\left( \log(x_i^{k})+ 
        \beta \left(
         [q^k_i]_{1...n} -  \textstyle\sum_{j=1}^m  W_{ij}z^k_j\right)
        \right)\\
        v_i^{k+1} &= q^k_i + \eta\left(  
         A p_i^k - 
         \begin{pmatrix}
         x_i^k\\
         y_i
         \end{pmatrix}
        \right), \quad \text{project } v_i^{k+1} \text{ onto } [-1,1]^{2n}.\\
        \lm_i^{k+1} &= z^k_i + \theta \textstyle \sum_{j=1}^m  W_{ij} x^k_j
         \end{align*}
             \STATE   Each node $i=1,..,m$ computes 
             \begin{align*}
                    p^{k+1}_i &=
     {\rm Softmax}\left(\log(p^{k}_i)
       - \beta \left( d+ 2\max_{i,j}C_{ij} A^\top v^{k+1}_i \right)
         \right) \\
          x_i^{k+1} &= 
           {\rm Softmax}\left(
        \log(x_i^{k})+ 
        \beta
       \left([v^{k+1}_i]_{1...n} -\textstyle \sum_{j=1}^m  W_{ij} \lm^{k+1}_j\right)
        \right)\\
        q_i^{k+1} &= q^k_i + \eta\left( A u_i^{k+1}-
        \begin{pmatrix}
          s_i^{k+1}\\
          y_i
        \end{pmatrix}
        \right), \quad \text{project } q_i^{k+1} \text{ onto } [-1,1]^{2n}. \\
        z_i^{k+1} &= z^k_i + \theta \textstyle \sum_{j=1}^m  W_{ij} s^{k+1}_j
             \end{align*}
           \ENDFOR
        \ENSURE 
        $ \hat \x^N = (\hat x_1^N , \dots, \hat x_m^N )^\top$, where  $\hat x_i^N = \frac{1}{N}\sum\limits_{k=1}^{N} s_i^k$
    \end{algorithmic}
\end{algorithm}


\subsection{Letter 'B'  in a variety of fonts from the notMNIST dataset}
\begin{figure}[H]
\centering
\includegraphics[width=0.8\textwidth]{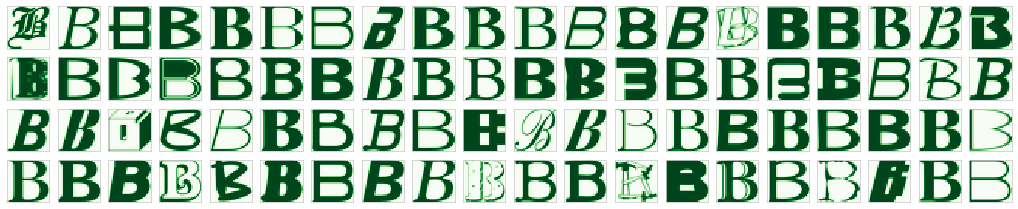}
\label{fig:Bdata}
\end{figure}

\subsection{Network architectures}

\begin{figure}[H]
  \centering
\begin{subfigure}{.24\textwidth}
  \centering
  \includegraphics[width=.7\linewidth]{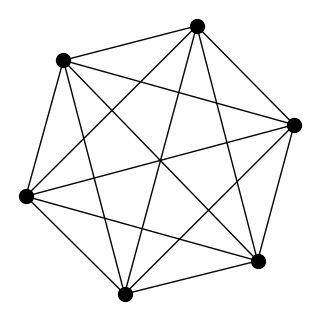}
  \caption*{The complete graph}
  \label{fig:sfig1}
\end{subfigure}%
\begin{subfigure}{.24\textwidth}
  \centering
  \includegraphics[width=.7\linewidth]{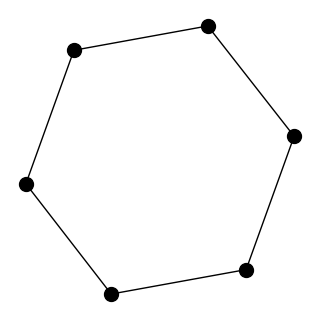}
  \caption*{The cycle graph}
\end{subfigure}
\begin{subfigure}{.24\textwidth}
  \centering
  \includegraphics[width=.7\linewidth]{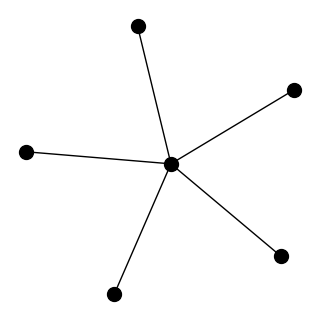}
  \caption*{The star graph}
  \label{fig:sfig2}
\end{subfigure}
\begin{subfigure}{.24\textwidth}
  \centering
  \includegraphics[width=.7\linewidth]{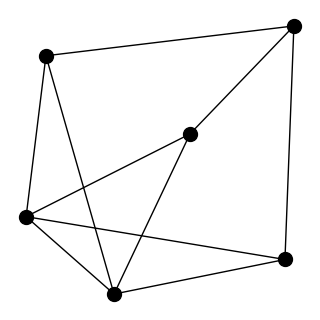}
  \caption*{ Erd\H{o}s-R\'enyi random graph with probability of edge creation $p=0.5$}
\end{subfigure}
\label{fig:Fig3}
\end{figure}

\section{Future work}
First of all, the approach of this paper can be generalized to non-symmetric matrix $W$, but with Network compatibility and Kernel properties. This observation allows to improve the bounds by using proper weighting, see \cite{gasnikov2017universal}.

By using the standard restarts or regularization arguments, all the results of this paper have convex-concave or strongly convex-concave analogues. Unfortunately, optimalilty w.r.t. $\varepsilon$ take places only for the convex-concave case not for the strongly convex-concave one.\footnote{The analysis developed in this paper also does not well fitted to the strongly convex-concave saddle-point problems with different constants of strong convexity and concavity, see the lower bound. In the non-distributed setup, this a popular direction of research \cite{lin2020near}. For distributed setup, this is an open problem.}
Our paper technique can be generalized to non-smooth problems by using another variant of sliding procedure \cite{lan2020first,dvinskikh2021decentralized,gorbunov2019optimal}. By using batching technique, the results can be generalized to stochastic saddle-point problems \cite{dvinskikh2021decentralized,gorbunov2019optimal}. Instead of the smooth convex-concave saddle-point problem we can consider general sum-type saddle-point problems with common variables in more general form. For each group of common variable, we introduce corresponding communication network which includes the nodes correspond to the terms contain this variable. The bounds change according to the worth condition number of Laplacian matrices of these networks. 
Based on the lower bound, we expect that optimal algorithms for all the parameters for smooth (strongly) convex-concave saddle-point problems one can search as a combination Mirror-Prox with accelerated consensus algorithm, see \cite{gorbunov2020recent} and references therein.

An interesting and open problem is 
generalizing
the results of this paper to $\beta$-similar terms in  the smooth convex-concave saddle-point problem \cite{arjevani2015communication,sun2019convergence}.

The results will probably change with the replacement of $L$ by $\beta$.

\end{document}